\dedicatory{Dedicated to Professor Shigeru Mukai on the occasion of his 
sixtieth birthday}
\title{Fundamental theorems for semi log canonical pairs}
\author{Osamu Fujino} 
\date{2013/10/25, version 1.92}
\subjclass[2010]{Primary 14E30; Secondary 14F17}
\keywords{semi log canonical pairs, vanishing theorems, 
cone and contraction theorem, quasi-log varieties}
\address{Department of Mathematics, Faculty of Science, 
Kyoto University, Kyoto 606-8502, Japan}
\email{fujino@math.kyoto-u.ac.jp}
\newcommand{\cond}[0]{\operatorname{\mathfrak{cond}}}
\newcommand{\codim}[0]{\operatorname{codim}}
\newcommand{\ncp}[0]{{\operatorname{ncp}}}
\newcommand{\Spec}[0]{\operatorname{Spec}}
\newcommand{\Pic}[0]{\operatorname{Pic}}
\newcommand{\Exc}[0]{\operatorname{Exc}}
\newcommand{\Supp}[0]{\operatorname{Supp}}
\newcommand{\snc}[0]{{\operatorname{snc}}}
\newcommand{\sn}[0]{{\operatorname{snc2}}}
\newcommand{\Sing}[0]{\operatorname{Sing}}
\newcommand{\Bir}[0]{\operatorname{Bir}}
\newcommand{\Aut}[0]{\operatorname{Aut}}
\newtheorem{thm}{Theorem}[section]
\newtheorem{lem}[thm]{Lemma}
\newtheorem{cor}[thm]{Corollary}
\newtheorem{prop}[thm]{Proposition}
\newtheorem{conj}[thm]{Conjecture}
\theoremstyle{definition}
\newtheorem{ex}[thm]{Example}
\newtheorem{defn}[thm]{Definition}
\newtheorem{rem}[thm]{Remark}
\newtheorem*{ack}{Acknowledgments} 
\newtheorem*{notation}{Notation}
\newtheorem{say}[thm]{}
\newtheorem{step}{Step}
\begin{document}

\maketitle 

\begin{abstract}
We prove that every quasi-projective semi log canonical 
pair has a natural quasi-log structure with several good properties. 
It implies that various vanishing theorems, torsion-free theorem, 
and 
the cone and contraction theorem hold for semi log canonical pairs. 
\end{abstract}

\tableofcontents

\section{Introduction}\label{sec1}

In this paper, we give a {\em{natural}} quasi-log structure 
(cf.~\cite{ambro}) to an arbitrary quasi-projective semi log canonical pair. 
Note that a stable pointed curve is a typical example of semi log canonical pairs. 
As applications, we obtain various Kodaira type vanishing theorems, the cone and contraction 
theorem, and so on, for semi log canonical pairs. 
The notion of semi log canonical singularities was introduced in \cite{ks} in order to 
investigate deformations of surface singularities and compactifications of 
moduli spaces for surfaces of general type. 
By the recent developments of the minimal model 
program, we know that the appropriate singularities to permit on the varieties 
at the boundaries of moduli spaces are 
semi log canonical (see, for example, \cite{alexeev0}, \cite{alexeev}, \cite{kollar}, 
\cite[Part III]{hk}, \cite{kovacs}, \cite{kovacs2}, and so on). 
We note that the approach to the moduli 
problems in \cite{ks} is not directly related to Mumford's 
geometric invariant theory. However, the notion of semi log canonical singularities appears to be 
natural from the geometric invariant theoretic viewpoint by \cite{odaka}. 
Moreover, semi log canonical pairs play crucial roles in our inductive 
treatment of the log abundance conjecture (see, 
for example, \cite{fujino-abun} and \cite{fujino-gongyo}). 
Therefore, it is very important to establish some foundational 
techniques to investigate semi log canonical pairs. 
To the best knowledge of the author, there were no attempts to prove the fundamental theorems 
of the log minimal model program, for example, the cone and contraction theorem, various 
Kodaira type vanishing theorems, and so on, for {\em{semi log canonical pairs}}.  
For a different approach to semi log canonical pairs by J\'anos Koll\'ar, see 
\cite{kollar-book}, where he discusses his gluing theory for stable pairs, that is, 
semi log canonical pairs with ample log canonical divisor. 
We prove the following theorem. 

\begin{thm}\label{main}
Let $(X, \Delta)$ be a quasi-projective 
semi log canonical pair. 
Then $[X, K_X+\Delta]$ has a quasi-log structure with only qlc singularities. 
\end{thm}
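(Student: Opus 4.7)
\medskip

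\noindent\textbf{Proof proposal.} The plan is to reduce to the known quasi-log structure on log canonical pairs via normalization, and then to patch things across the conductor. Let $\nu \colon X^\nu \to X$ be the normalization. By the standard adjunction formula for the conductor, there is a unique effective $\mathbb{R}$-divisor $\Theta$ on $X^\nu$ such that $K_{X^\nu}+\Theta = \nu^*(K_X+\Delta)$; explicitly $\Theta = D + \nu_{*}^{-1}\Delta$, where $D$ is the conductor divisor, which appears with coefficient $1$. The semi log canonical hypothesis is exactly the statement that $(X^\nu, \Theta)$ is log canonical, and the slc strata of $(X,\Delta)$ are precisely the $\nu$-images of the lc centers of $(X^\nu, \Theta)$ together with the components of $X$.

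Next I would take an embedded log resolution $h\colon Z \to X^\nu$ so that $\mathrm{Exc}(h) \cup h_{*}^{-1}\Theta$ is a simple normal crossing divisor on the smooth variety $Z$, arranged so that $h$ is an isomorphism at the generic point of every lc center of $(X^\nu,\Theta)$. Writing
$$K_Z + B_Z = h^{*}(K_{X^\nu}+\Theta) + E$$
with $B_Z$ and $E$ effective $\mathbb{R}$-divisors having disjoint support, the log canonical property of $(X^\nu, \Theta)$ forces $B_Z$ to be a boundary (coefficients in $[0,1]$), and $E$ to be $h$-exceptional. Setting $g := \nu\circ h \colon Z \to X$, the pair $(Z, B_Z)$ with morphism $g$ is the candidate quasi-log datum for $[X, K_X+\Delta]$: the numerical requirement $K_Z+B_Z \sim_{\mathbb{R}} g^{*}(K_X+\Delta) + E$ is immediate from the construction, and the identification of qlc strata of $(Z, B_Z)$ with slc strata of $(X,\Delta)$ reduces to the corresponding, and known, identification for log canonical pairs combined with the fact that $\nu$ sends lc centers of $(X^\nu,\Theta)$ onto slc centers of $(X,\Delta)$.

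The main obstacle is the ideal-sheaf condition in Ambro's definition, namely the identity
$$g_{*}\mathcal{O}_Z\!\bigl(\lceil -B_Z^{<1}\rceil + E\bigr) = \mathcal{O}_X,$$
which (together with the numerical condition above) is what must be established to conclude the qlc property. For the morphism $h$ alone one has $h_{*}\mathcal{O}_Z(\lceil -B_Z^{<1}\rceil + E) = \mathcal{O}_{X^\nu}$ by the standard vanishing/connectedness package for log canonical pairs, so the difficulty lies entirely in applying $\nu_{*}$ and matching the result with $\mathcal{O}_X$ rather than with the strictly larger sheaf $\nu_{*}\mathcal{O}_{X^\nu}$. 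This is where the semi log canonical structure really enters: $X$ is recovered from $X^\nu$ by gluing along the generically fixed-point-free involution on the normalization of the conductor, and the key point is that the round-up $B_Z^{=1}$ above $D$ is invariant under this involution—a symmetry that is built into the very definition of a semi log canonical pair. Checking this compatibility carefully at the conductor, and combining it with the log canonical statement on $X^\nu$, is the technical heart of the proof; once it is in place, the quasi-log axioms and the qlc conclusion follow.
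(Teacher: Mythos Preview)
Your approach has a genuine gap that cannot be repaired by the involution argument you sketch. With $Z$ a smooth resolution of $X^\nu$ and $g=\nu\circ h$, the divisor $\lceil -B_Z^{<1}\rceil + E$ is effective, so
\[
g_*\mathcal O_Z\bigl(\lceil -B_Z^{<1}\rceil + E\bigr)\;\supseteq\; g_*\mathcal O_Z \;=\; \nu_*\,h_*\mathcal O_Z \;=\; \nu_*\mathcal O_{X^\nu}\;\supsetneq\;\mathcal O_X,
\]
the last strict inclusion holding precisely because $X$ is not normal. No symmetry of the boundary above the conductor can make this pushforward smaller; the sheaf is already too big at the level of the structure sheaf. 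Equivalently, a quasi-log resolution must satisfy $f_*\mathcal O_Y\simeq\mathcal O_X$, and any $Y$ that factors through the normalization of $X$ fails this outright. The Whitney umbrella example in the paper makes the point concretely: the normalization of $X=(x^2-y^2z=0)$ is $\mathbb A^2$, and its structure sheaf pushes forward to something strictly larger than $\mathcal O_X$.

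The paper's proof takes a completely different route and this is not an accident. Rather than passing to $X^\nu$, it works directly with the \emph{reducible} variety $X$: using the Bierstone--Milman resolution theory for non-normal varieties, it embeds a suitable birational model of $X$ as a hypersurface in a smooth ambient variety $M$ of dimension $\dim X+1$, and then produces $Z$ as a simple normal crossing \emph{divisor} on $M$ with a morphism $h\colon Z\to X$ satisfying $h_*\mathcal O_Z\simeq\mathcal O_X$. Crucially $Z$ is not smooth (and when $X$ has pinch points $h$ is not even birational): the crossings of $Z$ are what encode the gluing along the conductor and force the pushforward down to $\mathcal O_X$. Your instinct that the conductor involution is the key information is correct, but it has to be implemented geometrically by building a genuinely singular simple normal crossing model over $X$, not by resolving $X^\nu$ and then trying to descend.
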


Our proof of Theorem \ref{main} heavily depends on 
the recent developments of the theory of partial resolution of singularities 
for {\em{reducible}} varieties (see, for example, \cite[Section 10.4]{kollar-book}, 
\cite{bierstone-milman}, 
\cite{bierstone-p}, and so on). Precisely speaking, we prove the following theorem. 

\begin{thm}[Main theorem]\label{main2} 
Let $(X, \Delta)$ be a quasi-projective semi log canonical pair. 
Then we can construct a smooth quasi-projective variety $M$ 
with $\dim M=\dim X+1$, 
a simple normal crossing divisor $Z$ on $M$, 
a subboundary $\mathbb R$-Cartier $\mathbb R$-divisor 
$B$ on $M$, and 
a projective surjective morphism 
$h:Z\to X$ with the following properties. 
\begin{itemize}
\item[(1)] $B$ and $Z$ have no common irreducible components. 
\item[(2)] $\Supp (Z+B)$ is a simple normal crossing divisor on $M$. 
\item[(3)] $K_Z+\Delta_Z\sim _{\mathbb R}h^*(K_X+\Delta)$ such that $\Delta_Z=B|_Z$. 
\item[(4)] $h_*\mathcal O_Z(\lceil -\Delta_Z^{<1}\rceil)\simeq \mathcal O_X$. 
\end{itemize}
By the properties $(1)$, $(2)$, $(3)$, and $(4)$, $[X, K_X+\Delta]$ has 
a quasi-log structure with only qlc singularities. 
\begin{itemize}
\item[(5)] The set of slc strata of $(X, \Delta)$ gives the set of 
qlc centers of $[X, K_X+\Delta]$. This means that 
$W$ is an slc stratum of 
$(X, \Delta)$ if and only if $W$ is 
the $h$-image of some stratum of the simple normal crossing 
pair $(Z, \Delta_Z)$. 
\end{itemize}
By the property $(5)$, the above quasi-log structure of $[X, K_X+\Delta]$ is 
compatible with the original semi log canonical structure of $(X, \Delta)$. 

We note that $h_*\mathcal O_Z\simeq \mathcal O_X$ by the condition {\em{(4)}}. 
\end{thm}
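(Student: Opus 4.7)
The strategy is to realize $(X,\Delta)$ as the image of a globally embedded simple normal crossing pair via a crepant birational morphism. First I would pass to the normalization $\nu\colon X^\nu\to X$; by the very definition of semi log canonical, the pair $(X^\nu,\Theta)$ with $\Theta=\nu^{-1}_{*}\Delta+C$ (where $C$ is the conductor divisor, appearing with coefficient one) is log canonical, and the slc strata of $(X,\Delta)$ are precisely the $\nu$-images of the log canonical centers of $(X^\nu,\Theta)$, together with the identifications induced by $C$.

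Next, invoking the modern resolution of singularities for reducible varieties (Bierstone--Milman, Bierstone--Pacheco, as cited in the paper), I would embed $X$ locally as a hypersurface and perform an embedded log resolution compatible with the slc stratification. The output is a smooth quasi-projective ambient $M$ of dimension $\dim X+1$ (reflecting that $Z$ is a hypersurface in $M$), a simple normal crossing divisor $Z\subset M$ birational to $X$, and a projective surjective morphism $h\colon Z\to X$ induced by the resolution. I would then take $B$ supported on the strict transform of the preimage of $\Delta$ together with suitable $h$-exceptional divisors, with coefficients dictated by the crepant identity $K_Z+B|_Z\sim_{\mathbb R}h^{*}(K_X+\Delta)$; the resolution can be arranged so that $\Supp(Z+B)$ is SNC on $M$ and $B$ shares no component with $Z$, giving (1), (2), and (3) directly from the construction.

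For (5), the strata of $(Z,\Delta_Z)$ (intersections of components of $Z$ with components of $\Delta_Z^{=1}$) map bijectively onto slc strata of $(X,\Delta)$: log resolutions create no new log canonical centers, and the normalization step translates lc centers on $X^\nu$ to slc strata on $X$ via $\nu$ and the conductor. The critical property (4), $h_{*}\mathcal O_Z(\lceil -\Delta_Z^{<1}\rceil)\simeq\mathcal O_X$, would follow by factoring $h$ through $X^\nu$: the log-resolution portion gives the analogous pushforward on $X^\nu$ by the standard pushforward identity for log resolutions of lc pairs, while the coefficient-one conductor $C$ accounts for the descent from $\nu_{*}\mathcal O_{X^\nu}$ back to $\mathcal O_X$ through the normalization exact sequence.

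The main obstacle is the second step: constructing an embedded resolution that simultaneously realizes $Z$ as a hypersurface in a smooth $M$ of one higher dimension, respects the slc stratification on the nose (as required for (5)), and is functorial enough to yield the precise sheaf isomorphism of (4) rather than only an inclusion up to a torsion-free subsheaf. This is exactly the point where the cited resolution theory for reducible varieties is indispensable; the most delicate bookkeeping concerns the interaction of the conductor with the resolution process and its contribution to the round-up $\lceil -\Delta_Z^{<1}\rceil$, which has to land exactly on $\mathcal O_X$ rather than on a proper subsheaf of $\nu_{*}\mathcal O_{X^\nu}$.
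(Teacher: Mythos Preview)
Your approach has a genuine gap at its core: any morphism $h\colon Z\to X$ that factors through the normalization $\nu\colon X^\nu\to X$ satisfies $h_*\mathcal O_Z\supseteq\nu_*\mathcal O_{X^\nu}$, and $\nu_*\mathcal O_{X^\nu}\supsetneq\mathcal O_X$ whenever $X$ is non-normal. The conductor appearing with coefficient one in $\Theta$ does not repair this: that coefficient governs discrepancies, not the pushforward of the structure sheaf, and the normalization exact sequence you invoke exhibits the obstruction rather than removing it. Consequently your claim that $Z$ can be taken birational to $X$ is false in general; the paper explicitly notes (Remark~1.5 and the Whitney umbrella discussion) that $h$ is \emph{not} birational unless the irreducible components of $X$ have no self-intersection in codimension one. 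For the Whitney umbrella, any smooth (hence any irreducible SNC) variety birational to $X$ dominates $X^\nu$, so $h_*\mathcal O_Z\simeq\mathcal O_X$ is impossible along your route.

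The paper's construction avoids the normalization entirely. It first resolves $X$ so that only double normal crossing points and pinch points remain, then embeds the result as a hypersurface in a smooth ambient. The decisive step you are missing is how pinch points are handled: one blows up the ambient along the pinch-point locus $C$ and takes the \emph{full reduced preimage} $V'+W$ (strict transform plus exceptional divisor), not just the strict transform. The map $V'+W\to X$ is then not birational (the exceptional component $W\simeq C\times\mathbb P^1$ has positive-dimensional fibers), but precisely because $W$ is retained one gets $\beta_*\mathcal O_{V'+W}\simeq\mathcal O_X$. A separate local calculation (the paper's Lemma~4.1) is needed to verify that $(W,D+\pi^*B)$ remains sub log canonical and that its lc centers map exactly onto the slc strata coming from the pinch locus, which is what makes (5) go through. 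Your outline does not engage with pinch points or with this non-birational enlargement of $Z$, and replacing it by a pass to $X^\nu$ yields the wrong sheaf in (4).
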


\begin{rem}
In Theorem \ref{main2}, if $K_X+\Delta$ is $\mathbb Q$-Cartier, then 
we can make $B$ a $\mathbb Q$-Cartier $\mathbb Q$-divisor on $M$ satisfying 
$$K_Z+\Delta_Z\sim _{\mathbb Q}
h^*(K_X+\Delta).$$ 
It is obvious by the construction of $B$ in the proof of Theorem \ref{main2}. 
\end{rem}

By Theorem \ref{main2}, we can prove the fundamental theorems, 
that is, various Kodaira type vanishing theorems, the base point free theorem, 
the rationality 
theorem, the cone theorem, and so on, for semi log canonical pairs. 
Note that all the fundamental theorems for log canonical pairs can be proved without using the theory of 
quasi-log varieties (see 
\cite{fujino-non} and \cite{fujino-fund}). 
We also note that all the results in this section except Theorem \ref{kod-vani} are 
new even for semi log canonical {\em{surfaces}}. 

\begin{ex}Let $X$ be an equidimensional projective variety 
having only normal crossing points and pinch points. 
Then $X$ is a semi log canonical variety. 
By Theorem \ref{main2}, 
$X$ has a natural quasi-log structure. 
Therefore, all the theorems in this section hold for $X$. 
\end{ex}

Note that $h$ is not necessarily birational in Theorem \ref{main2}. 
It is a key point of the theory of quasi-log 
varieties. 

\begin{rem}[Double covering trick due to Koll\'ar]\label{rem13}
If the irreducible components of $X$ have no self-intersection in codimension one, then 
we can make $h:Z\to X$ birational in Theorem \ref{main2}. 
For some applications, by using Koll\'ar's double covering trick (see Lemma \ref{double}), 
we can reduce the problem to the case when the irreducible components of $X$ have 
no self-intersection in codimension one. 
This reduction sometimes makes the problem much easier not only technically but 
also psychologically. 
\end{rem}

Let us quickly recall a very important example. 
We recommend the reader to see \cite[Section 3.6]{fujino-what} for related topics. 

\begin{say}[Whitney umbrella] 
Let us consider the Whitney umbrella 
$X=(x^2-y^2z=0)\subset \mathbb A^3$. In this case, 
we take a blow-up $Bl _C\mathbb A^3\to \mathbb A^3$ 
of $\mathbb A^3$ along $C=(x=y=0)\subset \mathbb A^3$ and 
set $M=Bl _C\mathbb A^3$ and $Z=X'+E$, where 
$X'$ is the strict transform of $X$ on $M$ and $E$ is the exceptional divisor of the 
blow-up. Then the projective surjective morphism $h:Z\to X$ gives 
a quasi-log structure on the pair $(X, 0)$. 
Since $Z$ is a quasi-projective simple normal crossing variety, 
we can easily use the theory of mixed Hodge structures and obtain various vanishing theorems 
for $X$. It is a key point of the theory of quasi-log 
varieties. 
Note that $K_Z=h^*K_X$ and $h_*\mathcal O_Z\simeq \mathcal O_X$. 
Although $g=h|_{X'}:X'\to X$ is a resolution of singularities, 
it does not have good properties. 
This is because $X$ is not normal and $\mathcal O_X\subsetneq g_*\mathcal O_{X'}$. 
\end{say}
 
 By Theorem \ref{main2}, we can prove the following vanishing theorem 
 (see \cite[Theorem 1-2-5]{kmm}). 
 It is a generalization of the Kawamata--Viehweg vanishing theorem. 
 
\begin{thm}[Vanishing theorem I]\label{vani-thm}
Let $(X, \Delta)$ be a semi log canonical pair and let $\pi:X\to S$ be 
a projective morphism onto 
an algebraic variety $S$. 
Let $D$ be a Cartier divisor on $X$, or a 
Weil divisor on $X$ whose support 
does not contain any irreducible components of the conductor of $X$ and 
which is $\mathbb Q$-Cartier. 
Assume that 
$D-(K_X+\Delta)$ is $\pi$-ample. 
Then $R^i\pi_*\mathcal O_X(D)=0$ for 
every $i>0$. 
\end{thm}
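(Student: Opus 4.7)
The strategy is to invoke Theorem \ref{main2} and reduce the vanishing on $X$ to a Kawamata--Viehweg type vanishing on the simple normal crossing pair $(Z,\Delta_Z)$, for which Fujino's vanishing theorem for snc pairs (see e.g.\ \cite{fujino-non}, \cite{fujino-fund}) is available. The quasi-log structure provided by Theorem \ref{main2} gives exactly the setup needed to run this Ambro-type argument.

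First assume $D$ is Cartier. By Theorem \ref{main2}~(4), $h_*\mathcal O_Z(\lceil -\Delta_Z^{<1}\rceil)\simeq \mathcal O_X$, so the projection formula identifies $\mathcal O_X(D)\simeq h_*\mathcal O_Z(h^*D+\lceil -\Delta_Z^{<1}\rceil)$. The key computation is to verify $R^j h_*\mathcal O_Z(h^*D+\lceil -\Delta_Z^{<1}\rceil)=0$ for $j>0$. Using Theorem \ref{main2}~(3), the twist $L:=h^*D+\lceil -\Delta_Z^{<1}\rceil$ satisfies
\[
L-\bigl(K_Z+\{\Delta_Z^{<1}\}+\Delta_Z^{=1}\bigr)\sim_{\mathbb R}h^*\bigl(D-(K_X+\Delta)\bigr),
\]
which is ample over $S$ and therefore ample over $X$ as well. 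Fujino's vanishing theorem for the snc pair $(Z,\{\Delta_Z^{<1}\}+\Delta_Z^{=1})$, applied once relative to $h$ and once relative to $\pi\circ h$, combined via the Leray spectral sequence, yields $R^i\pi_*\mathcal O_X(D)=0$ for all $i>0$.

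For the $\mathbb Q$-Cartier Weil divisor case I would reduce to the Cartier case by a cyclic cover. Choose $m>0$ with $mD$ Cartier and form the degree-$m$ cover $\tau:Y\to X$ associated to $mD$. The hypothesis that $\Supp D$ contains no irreducible component of the conductor of $X$ is precisely what guarantees that $\tau$ is étale in codimension one along the non-normal locus of $X$; consequently $(Y,\Delta_Y:=\tau^*\Delta)$ is again a quasi-projective semi log canonical pair and $\tau^*D$ is Cartier. Applying the Cartier case to $\tau^*D$ on $(Y,\Delta_Y)$ gives $R^i(\pi\circ\tau)_*\mathcal O_Y(\tau^*D)=0$, and extracting the natural $\mu_m$-invariant direct summand recovers $R^i\pi_*\mathcal O_X(D)=0$.

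The main obstacle is the bookkeeping required to align $\lceil -\Delta_Z^{<1}\rceil$ with the reduced boundary $\Delta_Z^{=1}$ so that Fujino's snc vanishing applies in the desired form; once the numerics are lined up, the Leray argument is routine. The conductor hypothesis in the Weil divisor case is more conceptual, and it is exactly what ensures that the cyclic cover construction preserves the slc property, so that Theorem \ref{main2} can be applied on $Y$.
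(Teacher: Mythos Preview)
Your Cartier case has the right architecture, but the vanishing step is misapplied. The divisor $h^*(D-(K_X+\Delta))$ is \emph{not} ample over $X$: it is the $h$-pullback of a divisor on $X$, hence $h$-numerically trivial, so no Kawamata--Viehweg-type statement relative to $h$ yields $R^jh_*\mathcal O_Z(L)=0$. The correct input is Fujino's combined torsion-free and vanishing package for simple normal crossing pairs (e.g.\ \cite[Theorem~6.3]{fujino-fund} or \cite[Theorem~2.39]{book}): from $L-(K_Z+\{\Delta_Z\}+\Delta_Z^{=1})\sim_{\mathbb R}h^*H$ with $H=D-(K_X+\Delta)$ $\pi$-ample on $X$, one obtains directly $R^p\pi_*\bigl(R^qh_*\mathcal O_Z(L)\bigr)=0$ for all $p>0$ and $q\geq 0$; taking $q=0$ gives the result. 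No separate vanishing of $R^jh_*$ and no Leray collapse are needed. This is exactly what the paper does, invoking the qlc vanishing \cite[Theorem~3.39]{book}. (You should also first shrink $S$ to affine so that $X$ is quasi-projective and Theorem~\ref{main2} applies.)

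Your $\mathbb Q$-Cartier Weil case has a genuine gap: there is in general no global cyclic cover $\tau:Y\to X$ of degree $m$ making $\tau^*D$ Cartier. The $\mu_m$-cover $\Spec_X\bigoplus_{i=0}^{m-1}\mathcal O_X(-iD)$ requires a trivialization $\mathcal O_X(mD)\simeq\mathcal O_X$, i.e.\ $mD\sim 0$, which is not assumed; and a branched cover along a section of $\mathcal O_X(mD)$ neither makes $D$ Cartier nor is \'etale in codimension one along the conductor. The paper's route is quite different. It first applies Koll\'ar's natural double cover (Lemma~\ref{double}) so that the irreducible components of $X$ have no self-intersection in codimension one, whence the quasi-log resolution $h:Z\to X$ of Theorem~\ref{main2} can be taken \emph{birational} (Remark~\ref{rem13}). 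With $h$ birational and $\Supp h^*D\cup\Supp\Delta_Z$ simple normal crossing, one then rearranges the fractional parts of $h^*D$ and $\Delta_Z$ to produce an effective $h$-exceptional Cartier divisor $E$ and a boundary $F$ with $\lfloor h^*D\rfloor+E-(K_Z+\Delta_Z^{=1}+F)\sim_{\mathbb R}h^*(D-(K_X+\Delta))$ and $h_*\mathcal O_Z(\lfloor h^*D\rfloor+E)\simeq\mathcal O_X(D)$. Birationality of $h$ is essential for this step, and it is the double cover---not a cyclic cover---that delivers it.
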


As a special case of Theorem \ref{vani-thm}, 
we have the Kodaira vanishing theorem for semi log canonical varieties 
(cf.~\cite[Corollary 6.6]{kss}). 

\begin{thm}[Kodaira vanishing theorem]\label{kod-vani} 
Let $X$ be a projective semi log canonical variety and let $\mathcal L$ be an ample line bundle on $X$. 
Then $H^i(X, \omega_X\otimes \mathcal L)=0$ for every $i>0$. 
\end{thm}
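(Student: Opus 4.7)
The plan is to obtain Theorem \ref{kod-vani} as an immediate consequence of Theorem \ref{vani-thm}, applied to the pair $(X,0)$ with $\pi:X\to \Spec k$ the structure morphism. What has to be arranged is that $\omega_X\otimes \mathcal L$ can be written as $\mathcal O_X(D)$ for a divisor $D$ that satisfies the hypotheses of Theorem \ref{vani-thm}, and that the corresponding ampleness condition holds.

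First, I would fix a Cartier divisor $L$ with $\mathcal O_X(L)\simeq \mathcal L$. Since $X$ is semi log canonical, $K_X$ is $\mathbb Q$-Cartier, so $K_X+L$ is a $\mathbb Q$-Cartier Weil divisor class. At the generic point of each irreducible component of the conductor of $X$, the variety $X$ is (analytically) a normal crossing of two smooth branches, so $\omega_X$ is locally free there and $K_X$ may be chosen to be trivial in a Zariski neighborhood of those generic points. Combined with moving $L$ if necessary, I obtain a Weil-divisor representative $D\sim_{\mathbb Q}K_X+L$ whose support contains no irreducible component of the conductor. By construction of $\mathcal O_X(-)$ on the reflexive class of such divisors, $\mathcal O_X(D)\simeq \omega_X\otimes \mathcal L$.

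Next, I observe that $D-(K_X+0)\sim_{\mathbb Q}L$, which is ample because $\mathcal L$ is ample. All the hypotheses of Theorem \ref{vani-thm} are therefore met: $(X,\Delta)=(X,0)$ is semi log canonical, $\pi:X\to \Spec k$ is projective, $D$ is a $\mathbb Q$-Cartier Weil divisor whose support avoids every conductor component, and $D-(K_X+\Delta)$ is $\pi$-ample. Applying Theorem \ref{vani-thm} gives
\[
H^i(X,\omega_X\otimes \mathcal L)\;\simeq\; H^i(X,\mathcal O_X(D))\;=\;R^i\pi_*\mathcal O_X(D)\;=\;0
\]
for every $i>0$, which is the desired conclusion.

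The only non-formal point in this reduction is verifying that the reflexive sheaf $\omega_X\otimes \mathcal L$ admits a Weil-divisor representative whose support misses the conductor components. This is the step I expect to be the main obstacle, though it is essentially a local calculation at the generic points of the double locus, exploiting the Gorenstein-in-codimension-one nature of semi log canonical singularities. Once this identification is in place, Theorem \ref{kod-vani} is a direct specialization of Theorem \ref{vani-thm}.
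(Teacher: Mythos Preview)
Your proposal is correct and follows essentially the same route as the paper: choose Weil-divisor representatives of $K_X$ and $L$ whose supports avoid every irreducible component of the conductor, set $D=K_X+L$, note that $D-(K_X+0)=L$ is ample, and invoke Theorem~\ref{vani-thm} with $(X,\Delta)=(X,0)$ and $\pi:X\to\Spec\mathbb C$. One small imprecision: you write $D\sim_{\mathbb Q}K_X+L$, but for the identification $\mathcal O_X(D)\simeq\omega_X\otimes\mathcal L$ you need honest linear equivalence (or simply $D=K_X+L$ for your chosen representatives), which is exactly what your construction delivers.
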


Note that the dual form of the Kodaira vanishing theorem, that is, 
$H^i(X, \mathcal L^{-1})=0$ for $i<\dim X$, is treated by Kov\'acs--Schwede--Smith. 
For the details, see \cite[Corollary 6.6]{kss}. 
In general, $X$ is not Cohen--Macaulay. Therefore, the dual form of the Kodaira 
vanishing theorem does not always hold. 
The arguments in \cite{kss} are based on the theory of Du Bois singularities (see, for example, 
\cite{kss}, \cite{kk}, and \cite[Chapter 6]{kollar-book}). In this paper, we do not use the notion of 
Du Bois singularities. 

To the best knowledge of the author, 
even the following basic vanishing result for stable $n$-folds with $n\geq 2$ is new. 
It is a direct consequence of Theorem \ref{vani-thm}. 

\begin{cor}[Vanishing theorem for stable varieties]\label{cor-new} 
Let $X$ be a stable variety, that is, a projective 
semi log canonical variety such that $K_X$ is ample. 
Then $H^i(X, \mathcal O_X(mK_X))=0$ for every $i>0$ and $m\geq 2$. In particular, 
$$\chi (X, \mathcal O_X(mK_X))=\dim _{\mathbb C}H^0(X, \mathcal O_X(mK_X))\geq 0$$ for every 
$m\geq 2$. 
\end{cor}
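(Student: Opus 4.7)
The plan is to deduce the corollary directly from Theorem \ref{vani-thm}. I take $\pi: X \to \Spec \mathbb C$ to be the structure morphism, $\Delta = 0$ (so that $(X,0)$ is a semi log canonical pair), and $D = mK_X$ for a fixed $m \geq 2$. Because $X$ is semi log canonical, $K_X$ is $\mathbb Q$-Cartier, hence so is $mK_X$. The reflexive sheaf $\mathcal O_X(mK_X)$ depends only on the Weil divisor class, so I may pick a representative of $mK_X$ whose support avoids every irreducible component of the conductor of $X$ by the standard linear-equivalence maneuver; this furnishes exactly the hypothesis required by the ``Weil divisor'' form of Theorem \ref{vani-thm}.

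The decisive calculation is
\[
D - (K_X + \Delta) = mK_X - K_X = (m-1)K_X,
\]
which is ample because $K_X$ is ample and $m - 1 \geq 1$; in particular it is $\pi$-ample over $\Spec \mathbb C$. Theorem \ref{vani-thm} then yields
\[
H^i(X, \mathcal O_X(mK_X)) = R^i\pi_* \mathcal O_X(mK_X) = 0
\]
for every $i > 0$, which is the first assertion of the corollary.

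For the ``in particular'' part, I expand
\[
\chi(X, \mathcal O_X(mK_X)) = \sum_{i \geq 0} (-1)^i \dim_{\mathbb C} H^i(X, \mathcal O_X(mK_X));
\]
the vanishing just proved collapses the right-hand side to $\dim_{\mathbb C} H^0(X, \mathcal O_X(mK_X))$, which is manifestly a nonnegative integer.

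I do not foresee any real obstacle. The whole argument is a direct substitution into the hypothesis of Theorem \ref{vani-thm} together with a one-line Euler characteristic computation. The only minor bookkeeping is the verification of the support-versus-conductor condition in the statement of Theorem \ref{vani-thm}, which is handled by choosing a suitable linearly equivalent representative of $mK_X$; note that the condition $m \geq 2$ (rather than $m \geq 1$) is used precisely to ensure strict ampleness of $(m-1)K_X$.
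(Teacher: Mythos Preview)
Your proof is correct and follows essentially the same route as the paper: apply Theorem \ref{vani-thm} with $\Delta=0$ and $D=mK_X$, noting that $(m-1)K_X$ is ample for $m\geq 2$, after first choosing a representative of $K_X$ (and hence of $mK_X$) whose support avoids the irreducible components of the conductor. The Euler characteristic remark is immediate from the vanishing, exactly as you wrote.
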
 
 
Theorem \ref{vani-thm} is a special case of the following theorem:~Theorem \ref{vani-thm2}. 
It is a generalization of the vanishing theorem of Reid--Fukuda type. 
The proof of Theorem \ref{vani-thm2} is much harder than that of 
Theorem \ref{vani-thm}. 

\begin{thm}[Vanishing theorem II]\label{vani-thm2}
Let $(X, \Delta)$ be a semi log canonical pair and let $\pi:X\to S$ be 
a projective morphism onto 
an algebraic variety $S$. 
Let $D$ be a Cartier divisor on $X$, or a 
Weil divisor on $X$ whose support 
does not contain any irreducible components of the conductor of $X$  and 
which is $\mathbb Q$-Cartier. 
Assume that 
$D-(K_X+\Delta)$ is nef and log big over $S$ with respect to $(X, \Delta)$. 
Then $R^i\pi_*\mathcal O_X(D)=0$ for 
every $i>0$. 
\end{thm}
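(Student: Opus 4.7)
The plan is to transport the problem to the simple normal crossing pair $(Z,\Delta_Z)$ supplied by Theorem \ref{main2}, and then invoke a Reid--Fukuda type vanishing theorem on $(Z,\Delta_Z)$. This mirrors the strategy used for Theorem \ref{vani-thm}, but the ample-case Kawamata--Viehweg vanishing on $(Z,\Delta_Z)$ is replaced by its more delicate nef-and-log-big analogue, which is the reason the proof is more involved.

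By Theorem \ref{main2} we have $h:Z\to X$ with $K_Z+\Delta_Z\sim_{\mathbb R}h^*(K_X+\Delta)$, where $(Z,\Delta_Z)$ is an embedded simple normal crossing pair. Because $D$ is $\mathbb Q$-Cartier and $\Supp D$ contains no irreducible component of the conductor of $X$, the pullback $h^*D$ is a well-defined $\mathbb Q$-Cartier $\mathbb Q$-divisor on $Z$, and
$$h^*D-(K_Z+\Delta_Z)\sim_{\mathbb R}h^*\bigl(D-(K_X+\Delta)\bigr)$$
is $(\pi\circ h)$-nef and log big over $S$ with respect to $(Z,\Delta_Z)$. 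The log-bigness transfers because, by item (5) of Theorem \ref{main2}, the qlc centers of $[X,K_X+\Delta]$ are precisely the $h$-images of strata of $(Z,\Delta_Z)$: bigness on each qlc center of $X$ pulls back to bigness on every stratum of $Z$ lying above it.

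Next, set $L=h^*D+\lceil -\Delta_Z^{<1}\rceil-\lfloor\Delta_Z^{>1}\rfloor$ (first for Cartier $D$; the $\mathbb Q$-Cartier Weil case reduces to this after clearing denominators in $D$ and a small perturbation of $\Delta$, using that $\Supp D$ avoids the conductor). Item (4) of Theorem \ref{main2} combined with the projection formula yields $h_*\mathcal O_Z(L)\simeq\mathcal O_X(D)$, while the torsion-free theorem for embedded snc pairs gives $R^jh_*\mathcal O_Z(L)=0$ for $j>0$. The Leray spectral sequence therefore reduces the required vanishing $R^i\pi_*\mathcal O_X(D)=0$ to $R^i(\pi\circ h)_*\mathcal O_Z(L)=0$ for $i>0$.

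The main obstacle is this remaining vanishing on $(Z,\Delta_Z)$, i.e., the Reid--Fukuda type vanishing theorem for embedded simple normal crossing pairs. I would prove it by induction on the number of strata. Choose a minimal stratum $V$ of $(Z,\Delta_Z)$: the log-big hypothesis specializes on $V$ to the statement that $(h^*D-(K_Z+\Delta_Z))|_V$ is nef and big, so the classical Kawamata--Viehweg vanishing applies on the smooth variety $V$. A standard short exact sequence associated to the ideal sheaf of $V$ in $Z$ then reduces the problem to the snc pair obtained by removing $V$, which has strictly fewer strata, closing the induction. The delicate part is verifying that restriction to, and removal of, a stratum preserves both the snc hypothesis and the nef-and-log-big property of the residual difference; this bookkeeping is the technical heart of the argument.
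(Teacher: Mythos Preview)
Your overall shape---pass to the simple normal crossing pair $(Z,\Delta_Z)$ of Theorem~\ref{main2} and use a Reid--Fukuda type vanishing there---matches the paper, but two steps do not go through as written. First, the assertion that ``the torsion-free theorem for embedded snc pairs gives $R^jh_*\mathcal O_Z(L)=0$ for $j>0$'' is not correct: torsion-freeness only constrains the associated primes of $R^jh_*\mathcal O_Z(L)$ to lie among generic points of $h$-images of strata, and in the log canonical/slc setting these higher direct images can genuinely be supported on slc centers and nonzero. Without this vanishing the Leray spectral sequence does not collapse, and knowing $R^i(\pi\circ h)_*\mathcal O_Z(L)=0$ for all $i>0$ only forces $R^1\pi_*(h_*\mathcal O_Z(L))=0$ via the edge map; for $i\ge 2$ the differentials from $E_2^{i-2,1}$ obstruct the conclusion. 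The paper sidesteps this entirely: it does not attempt to control $R^jh_*$, but rather identifies $h_*\mathcal O_Z(\cdots)\simeq\mathcal O_X(D)$ and then invokes the already-established Reid--Fukuda vanishing for qlc pairs \cite[Theorem~3.39]{book} directly on $X$, so no spectral-sequence descent is needed.

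Second, your reduction of the $\mathbb Q$-Cartier Weil case ``after clearing denominators in $D$ and a small perturbation of $\Delta$'' is not a valid reduction: vanishing for $\mathcal O_X(mD)$ says nothing about $\mathcal O_X(D)$, and a perturbation of $\Delta$ does not convert a Weil $D$ into a Cartier one. The paper handles this case quite differently. It first applies Koll\'ar's double cover (Lemma~\ref{double}) so that the irreducible components of $X$ have no self-intersection in codimension one, which by Remark~\ref{rem13} allows $h$ to be taken \emph{birational}; it also arranges $\Supp h^*D\cup\Supp\Delta_Z$ to be simple normal crossing. Then, comparing the fractional parts of $h^*D$ and $\Delta_Z$, it constructs an effective Cartier divisor $E$ and a boundary $F$ with $\lfloor h^*D\rfloor+E-(K_Z+\Delta_Z^{=1}+F)\sim_{\mathbb R}h^*(D-(K_X+\Delta))$. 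The crucial point is that birationality of $h$ makes $E$ \emph{$h$-exceptional}, so $h_*\mathcal O_Z(\lfloor h^*D\rfloor+E)\simeq\mathcal O_X(D)$, after which \cite[Theorem~3.39]{book} again applies. Your sketch omits both the double cover and this fractional-part bookkeeping, and neither can be recovered from the hints you give.
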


For applications to the study of linear systems on semi log canonical 
pairs, Theorem \ref{vani-thm3}, 
which is a generalization of the Kawamata--Viehweg--Nadel 
vanishing theorem,  
is more convenient (see, for example, \cite[Theorem 8.1]{fujino-fund}). 
See also Remark \ref{rem42} below. 

\begin{thm}[Vanishing theorem III]\label{vani-thm3}
Let $(X, \Delta)$ be a semi log canonical pair and let $\pi:X\to S$ be 
a projective morphism onto 
an algebraic variety $S$. 
Let $D$ be a Cartier divisor on $X$ such that 
$D-(K_X+\Delta)$ is nef and log big over $S$ with 
respect to $(X, \Delta)$.  
Assume that 
$X'$ is a union of some slc strata of $(X, \Delta)$ with the reduced structure. 
Let $\mathcal I_{X'}$ be the defining 
ideal sheaf of $X'$ on $X$. 
Then $R^i\pi_*(\mathcal I_{X'}\otimes \mathcal O_X(D))=0$ for 
every $i>0$. 
\end{thm}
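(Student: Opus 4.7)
The plan is to derive this statement from Theorem \ref{vani-thm2} together with adjunction for qlc quasi-log structures, applied to the quasi-log variety produced by Theorem \ref{main2}.

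First, I would equip $[X, K_X+\Delta]$ with its natural qlc quasi-log structure via Theorem \ref{main2}. By property~(5) of that theorem, the slc strata of $(X,\Delta)$ are exactly the qlc centers of this structure, so by hypothesis $X'$ is a union of qlc centers. Adjunction for quasi-log varieties (in the sense of Ambro, cf.~\cite{ambro}) then equips $[X', (K_X+\Delta)|_{X'}]$ with an induced qlc quasi-log structure whose qlc centers are precisely the qlc centers of $[X, K_X+\Delta]$ contained in $X'$. Concretely, one restricts the data $h:Z\to X$ of Theorem \ref{main2} to the union $Z'$ of those strata of $(Z, \Delta_Z)$ which map into $X'$, and verifies that $(h|_{Z'}, Z', \Delta_{Z'}:=\Delta_Z|_{Z'})$ is a legitimate quasi-log datum for $[X', (K_X+\Delta)|_{X'}]$.

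Next, I would observe that $D|_{X'}-(K_X+\Delta)|_{X'}$ is nef and log big over $S$ with respect to the induced quasi-log structure on $X'$: nefness is immediate by restriction, and log bigness is inherited because every qlc center of $X'$ is also a qlc center of $X$, on each of which $D-(K_X+\Delta)$ was assumed big. Applying the quasi-log version of Theorem \ref{vani-thm2} both to $X$ and to $X'$ then yields $R^i\pi_*\mathcal O_X(D)=0$ and $R^i(\pi|_{X'})_*\mathcal O_{X'}(D|_{X'})=0$ for every $i>0$. Feeding this into the long exact sequence of higher direct images attached to the short exact sequence
\[
0\to \mathcal I_{X'}\otimes \mathcal O_X(D)\to \mathcal O_X(D)\to \mathcal O_{X'}(D|_{X'})\to 0,
\]
which is exact since $D$ is Cartier, gives the desired vanishing.

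The main obstacle is the adjunction step. One must check carefully that restricting the simple normal crossing data of Theorem \ref{main2} to the strata lying over $X'$ yields a quasi-log structure on $X'$ satisfying the analogue of property~(4), namely $(h|_{Z'})_*\mathcal O_{Z'}(\lceil -\Delta_{Z'}^{<1}\rceil)\simeq \mathcal O_{X'}$, so that $R^i(\pi|_{X'})_*\mathcal O_{X'}(D|_{X'})$ is genuinely computed by the underlying simple normal crossing pair and is susceptible to Theorem \ref{vani-thm2}. Once this compatibility is in place, the cohomological deduction from the short exact sequence is formal.
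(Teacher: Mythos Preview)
Your approach is close in spirit to the paper's---both route through the quasi-log structure of Theorem~\ref{main2}---but there is a genuine gap in your long exact sequence argument. From the vanishing $R^i\pi_*\mathcal O_X(D)=0$ and $R^i(\pi|_{X'})_*\mathcal O_{X'}(D|_{X'})=0$ for $i>0$, the long exact sequence attached to
\[
0\to \mathcal I_{X'}\otimes \mathcal O_X(D)\to \mathcal O_X(D)\to \mathcal O_{X'}(D|_{X'})\to 0
\]
only yields $R^i\pi_*(\mathcal I_{X'}\otimes \mathcal O_X(D))=0$ for $i\geq 2$. For $i=1$ you still need the surjectivity of
\[
\pi_*\mathcal O_X(D)\longrightarrow (\pi|_{X'})_*\mathcal O_{X'}(D|_{X'}),
\]
and nothing you have invoked so far supplies it. Indeed, this surjectivity is \emph{equivalent} to the $R^1$ vanishing you are trying to prove, so the argument as written is incomplete at exactly the most interesting degree.

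The paper sidesteps this by not passing through the short exact sequence at all: it first shrinks $S$ so that $X$ is quasi-projective (a reduction you also omitted; Theorem~\ref{main2} requires it), and then appeals directly to \cite[Theorem 3.39]{book}, the adjunction and vanishing theorem for qlc pairs, whose conclusion \emph{is} the ideal-sheaf vanishing $R^i\pi_*(\mathcal I_{X'}\otimes \mathcal O_X(D))=0$. The underlying proof of that black box does not proceed via your exact sequence on $X$; rather, it works upstairs on the quasi-log resolution $h:(Z,\Delta_Z)\to X$, identifies $\mathcal I_{X'}$ with the pushforward $h_*\mathcal O_Z(\lceil -\Delta_Z^{<1}\rceil - Z')$ for an appropriate reduced subdivisor $Z'\subset \Delta_Z^{=1}$, and then applies the torsion-free and vanishing theorems for simple normal crossing pairs. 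If you want to make your outline work, you would need to establish the restriction surjectivity separately---and the natural way to do that is precisely this computation on $Z$, which at that point already gives the full ideal-sheaf vanishing without the detour through the exact sequence.
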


Note that our proof of the vanishing theorems uses the theory of the 
mixed Hodge structures on cohomology groups with compact support 
(cf.~\cite[Chapter 2]{book}). 
Therefore, Theorems \ref{vani-thm}, \ref{kod-vani}, \ref{vani-thm2}, and 
\ref{vani-thm3} are Hodge theoretic 
(see also \cite{fujino-on}, \cite{fujino-fund}, \cite{fujino-vanishing}, and \cite{fujino-inj}). 

We can also prove a generalization of Koll\'ar's torsion-free theorem for 
semi log canonical pairs (see \cite[Theorem 1-2-7]{kmm}, 
\cite[Theorem 2.2]{high}, \cite[Theorem 6.3 (iii)]{fujino-fund}, and so on). 

\begin{thm}[Torsion-free theorem]\label{thm-torsion} 
Let $(X, \Delta)$ be a semi log canonical pair 
and let $\pi:X\to S$ be a projective morphism onto an algebraic variety $S$. 
Let $D$ be a Cartier divisor on $X$, or a 
Weil divisor on $X$ whose support 
does not contain any irreducible components of the conductor of $X$ and 
which is $\mathbb Q$-Cartier. 
Assume that $D-(K_X+\Delta)$ 
is $\pi$-semi-ample. 
Then every associated prime of $R^i\pi_*\mathcal O_X(D)$ 
is the generic point of the $\pi$-image of some slc stratum of $(X, \Delta)$ for every $i$. 
\end{thm}

By the following adjunction formula, which is a direct consequence of 
Theorem \ref{main2}, we can apply the theory of quasi-log 
varieties to any union of some slc strata of a quasi-projective semi log canonical pair $(X, \Delta)$. 

\begin{thm}[Adjunction]\label{thm-adj} 
Let $(X, \Delta)$ be a quasi-projective semi log canonical pair and let $X'$ be a union 
of some slc strata of $(X, \Delta)$ with the 
reduced structure. 
Then $[X', (K_X+\Delta)|_{X'}]$ has a natural quasi-log structure 
with only qlc singularities induced by the quasi-log structure on $[X, K_X+\Delta]$ 
constructed in 
{\em{Theorem \ref{main2}}}. 
Therefore, $W$ is a qlc center of $[X', (K_X+\Delta)|_{X'}]$ if and only if 
$W$ is an slc stratum of 
$(X, \Delta)$ contained in $X'$. 
In particular, $X'$ is semi-normal. 
\end{thm}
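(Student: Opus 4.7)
The plan is to inherit the quasi-log data on $[X',(K_X+\Delta)|_{X'}]$ by restricting the morphism $h\colon Z\to X$ furnished by Theorem \ref{main2}. With notation as in that theorem, I would define $Z'\subset Z$ to be the union, with its reduced structure, of those strata of the snc pair $(Z,\Delta_Z)$ whose $h$-image lies in $X'$; by property (5) these are exactly the strata of $Z$ dominating an slc stratum of $(X,\Delta)$ contained in $X'$. Since $Z'$ is a union of irreducible components of strata of the snc divisor $Z\subset M$, it is itself a simple normal crossing subvariety of $M$, so conditions (1) and (2) for $(M,Z',B)$ are immediate. Because $B$ shares no components with $Z$, the restrictions satisfy $\Delta_{Z'}:=B|_{Z'}=\Delta_Z|_{Z'}$. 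Setting $h':=h|_{Z'}\colon Z'\to X'$, condition (3) follows by restricting the $\mathbb R$-linear equivalence of Theorem \ref{main2}(3) to $Z'$ and using $K_{Z'}=K_Z|_{Z'}$, which holds as $Z'$ is a union of components of $Z$.

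The main obstacle is condition (4), namely $h'_*\mathcal O_{Z'}(\lceil-\Delta_{Z'}^{<1}\rceil)\simeq\mathcal O_{X'}$. To establish it I would let $Z''$ denote the union of those strata of $(Z,\Delta_Z)$ not contained in $Z'$ (whose $h$-images are the slc strata of $(X,\Delta)$ not lying in $X'$), and use the short exact sequence on $Z$
\begin{equation*}
0\longrightarrow \mathcal O_{Z''}\bigl(\lceil-\Delta_{Z''}^{<1}\rceil - Z'|_{Z''}\bigr)\longrightarrow \mathcal O_Z\bigl(\lceil-\Delta_Z^{<1}\rceil\bigr)\longrightarrow \mathcal O_{Z'}\bigl(\lceil-\Delta_{Z'}^{<1}\rceil\bigr)\longrightarrow 0
\end{equation*}
obtained by tensoring the standard sequence $0\to\mathcal I_{Z'/Z}\to\mathcal O_Z\to\mathcal O_{Z'}\to 0$ with $\mathcal O_Z(\lceil-\Delta_Z^{<1}\rceil)$. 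Applying $h_*$, property (4) of Theorem \ref{main2} identifies the middle term with $\mathcal O_X$; the desired isomorphism on $X'$ drops out once one shows that the pushforward of the kernel is the ideal sheaf $\mathcal I_{X'}$ and that the induced sequence on $X$ remains short exact. Both assertions reduce to an $R^1h_*$-vanishing that is provided by the torsion-free/strict-support properties underlying the construction of $h$ in Theorem \ref{main2}, in the same Hodge-theoretic spirit as Theorem \ref{vani-thm3}. I expect this step to be the most delicate, requiring a careful bookkeeping of the strata of $(Z'',\Delta_{Z''})$ and of the divisor $Z'|_{Z''}$ cut out on $Z''$ by $Z'$.

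Once (4) is in hand, condition (5) is tautological: a stratum of $(Z',\Delta_{Z'})$ is by definition a stratum of $(Z,\Delta_Z)$ contained in $Z'$, and its $h'$-image is an slc stratum of $(X,\Delta)$ inside $X'$; conversely every such slc stratum arises in this way. This produces a quasi-log structure on $[X',(K_X+\Delta)|_{X'}]$ with only qlc singularities whose qlc centers are precisely the slc strata of $(X,\Delta)$ contained in $X'$. Finally, for semi-normality, note that $\lceil-\Delta_{Z'}^{<1}\rceil\ge 0$, so the isomorphism (4) factors as
\begin{equation*}
\mathcal O_{X'}\hookrightarrow h'_*\mathcal O_{Z'}\hookrightarrow h'_*\mathcal O_{Z'}\bigl(\lceil-\Delta_{Z'}^{<1}\rceil\bigr)\simeq \mathcal O_{X'},
\end{equation*}
forcing $h'_*\mathcal O_{Z'}=\mathcal O_{X'}$. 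Since the simple normal crossing variety $Z'$ is semi-normal and $h'$ is surjective with $h'_*\mathcal O_{Z'}=\mathcal O_{X'}$, the universal property of the semi-normalization implies that $X'$ is semi-normal as well.
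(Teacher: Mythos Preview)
Your construction conflates strata of $(Z,\Delta_Z)$ with irreducible components of $Z$. When $X'$ contains an slc stratum of positive codimension in $X$---for instance an slc center---the strata of $(Z,\Delta_Z)$ mapping into it have dimension strictly less than $\dim Z$, so your $Z'$ is \emph{not} a union of components of $Z$ and is \emph{not} a divisor on $M$. Consequently the claims that conditions (1) and (2) of Theorem~\ref{main2} hold for $(M,Z',B)$, and that $K_{Z'}=K_Z|_{Z'}$, are unjustified; the latter is simply false by adjunction once $Z'$ sits in higher codimension. Your short exact sequence and the decomposition $Z=Z'\cup Z''$ into complementary unions of components likewise collapse, since $Z''$ would then contain all of $Z$. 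The argument as written therefore only covers the special case in which $X'$ is a union of irreducible components of $X$.

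The paper sidesteps all of this by invoking the general adjunction theorem for quasi-log varieties (\cite[Theorem 3.39]{book}, \cite[Theorem 3.6]{fujino-qlog}) as a black box: once Theorem~\ref{main2} equips $[X,K_X+\Delta]$ with a qlc structure whose centers are exactly the slc strata, that adjunction theorem automatically endows $[X',(K_X+\Delta)|_{X'}]$ with an induced qlc structure, and semi-normality of $X'$ is then a standard property of qlc pairs (\cite[Remark 3.33]{book}, \cite[Remark 3.2]{fujino-qlog}). If you want to unwind that black box, you must produce a \emph{new} ambient smooth variety in which $Z'$ embeds as a simple normal crossing divisor, and compute the correct boundary on $Z'$ via iterated adjunction from $(Z,\Delta_Z)$; the formula $\Delta_{Z'}=\Delta_Z|_{Z'}$ is not right in general, because components of $\Delta_Z^{=1}$ and of $Z$ meeting $Z'$ contribute to the boundary of $Z'$.
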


Theorem \ref{vani-thm4}, which is a vanishing theorem for 
a union of some slc strata, is very powerful for various applications (cf.~\cite[Theorem 11.1]{fujino-fund}). 
See Remark \ref{rem15} below. 

\begin{thm}[Vanishing theorem IV]\label{vani-thm4} 
Let $(X, \Delta)$ be a semi log canonical 
pair and let $\pi:X\to S$ be a projective 
morphism onto an algebraic variety $S$. 
Assume that 
$X'$ is a union of some slc strata of $(X, \Delta)$ with the reduced structure. 
Let $L$ be a Cartier divisor on $X'$ such that 
$L-(K_X+\Delta)|_{X'}$ is nef over $S$. 
Assume that 
$(L-(K_X+\Delta)|_{X'})|_W$ is big over $S$ where 
$W$ is any slc stratum of $(X, \Delta)$ contained in $X'$. 
Then $R^i(\pi|_{X'})_*\mathcal O_{X'}(L)=0$ for every $i>0$. 
\end{thm}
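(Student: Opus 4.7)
The plan is to reduce the statement to the vanishing theorem for quasi-log pairs (the qlc analog of Theorem \ref{vani-thm2}, cf.~\cite[Theorem 11.1]{fujino-fund}) by equipping $X'$ with the natural quasi-log structure supplied by Theorem \ref{thm-adj}.

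First, I would apply Theorem \ref{thm-adj} to endow $[X', (K_X+\Delta)|_{X'}]$ with a quasi-log structure having only qlc singularities. The key feature of this structure is the explicit identification of qlc centers: a subvariety $W\subset X'$ is a qlc center of $[X', (K_X+\Delta)|_{X'}]$ if and only if $W$ is an slc stratum of $(X,\Delta)$ contained in $X'$. In particular, every irreducible component of $X'$ is a qlc center, since each such component is an slc stratum of $(X,\Delta)$ that sits inside $X'$.

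Next, set $H:=L-(K_X+\Delta)|_{X'}$, an $\mathbb R$-Cartier $\mathbb R$-divisor on $X'$. By hypothesis, $H$ is $\pi|_{X'}$-nef, and $H|_W$ is $\pi|_{X'}$-big for every slc stratum $W$ of $(X,\Delta)$ with $W\subset X'$. Combined with the dictionary above, this says precisely that $H$ is nef and log big over $S$ with respect to the qlc pair $[X', (K_X+\Delta)|_{X'}]$: the bigness of $H$ on $X'$ itself (as a union of qlc centers, i.e.~on every irreducible component) follows from applying the hypothesis to the qlc centers that are irreducible components of $X'$.

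Finally, I would invoke the vanishing theorem for qlc pairs with nef and log big polarizations to conclude $R^i(\pi|_{X'})_*\mathcal O_{X'}(L)=0$ for all $i>0$. This qlc vanishing is Hodge-theoretic and is obtained by pulling the situation back to the simple normal crossing ambient model produced in Theorem \ref{main2} and running the usual mixed Hodge structure argument on cohomology with compact support, as in the proof of Theorems \ref{vani-thm} and \ref{vani-thm2}. The main obstacle will therefore not be the reduction step, which is essentially bookkeeping via Theorem \ref{thm-adj}, but rather carefully verifying that the Hodge-theoretic vanishing for quasi-log pairs developed for lc pairs in \cite{fujino-fund} carries over in the semi log canonical setting; this amounts to checking that the qlc structure given by Theorems \ref{main2} and \ref{thm-adj} satisfies all the hypotheses needed in that vanishing statement (in particular, that the underlying simple normal crossing pair $(Z,\Delta_Z)$ restricts well to the preimage $h^{-1}(X')$ and that the stratification and push-forward computation $h_*\mathcal O_Z(\lceil -\Delta_Z^{<1}\rceil)\simeq \mathcal O_X$ survive restriction to $X'$).
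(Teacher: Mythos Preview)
Your approach is essentially the same as the paper's: equip $X'$ with the induced quasi-log structure via Theorem \ref{thm-adj} and then invoke the vanishing theorem for qlc pairs (the paper cites \cite[Theorem 3.39 (ii)]{book}). Two small points are worth flagging. First, Theorem \ref{thm-adj} requires $(X,\Delta)$ to be \emph{quasi-projective}, which is not part of the hypotheses of Theorem \ref{vani-thm4}; the paper handles this by first shrinking $S$ to an affine open, so that $X$ becomes quasi-projective and Theorem \ref{main2} (hence Theorem \ref{thm-adj}) applies---you should add this reduction at the outset. Second, your closing paragraph overstates the difficulty: once $[X',(K_X+\Delta)|_{X'}]$ is a qlc pair, the vanishing theorem for quasi-log varieties is a black box that applies directly, independent of the slc origin of the structure; there is no further verification needed about how the simple normal crossing model restricts, since that work is already packaged into the adjunction statement and into \cite[Theorem 3.39]{book}.
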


Theorem \ref{vani-thm4} directly follows from Theorem \ref{thm-adj} by the 
theory of quasi-log varieties. 

By Theorem \ref{main2}, we can use the theory of quasi-log varieties 
to investigate semi log canonical pairs.
The base point free theorem holds for semi log canonical pairs 
(cf.~\cite[Theorem 3-1-1]{kmm}). 

\begin{thm}[Base point free theorem]\label{bpf} 
Let $(X, \Delta)$ be a semi log canonical pair 
and let $\pi:X\to S$ be a projective morphism onto 
an algebraic variety $S$. 
Let $D$ be a $\pi$-nef Cartier divisor on $X$. 
Assume that 
$aD-(K_X+\Delta)$ is $\pi$-ample for some real number $a>0$. 
Then $\mathcal O_X(mD)$ is $\pi$-generated for every $m\gg 0$, that is, 
there exists a positive integer $m_0$ such that 
$\mathcal O_X(mD)$ is $\pi$-generated for every $m\geq m_0$. 
\end{thm}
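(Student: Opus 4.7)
The plan is to reduce Theorem \ref{bpf} to the base point free theorem in the theory of quasi-log varieties. By Theorem \ref{main2}, the pair $[X, K_X+\Delta]$ carries a natural quasi-log structure whose qlc centers are exactly the slc strata of $(X,\Delta)$, and which has only qlc singularities (so the non-qlc locus is empty). The hypotheses that $D$ is $\pi$-nef and $aD-(K_X+\Delta)$ is $\pi$-ample match precisely the hypotheses of the standard base point free theorem in the quasi-log setting, so the content of Theorem \ref{bpf} is essentially that this reduction is legitimate.

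The remaining argument is then the Kawamata--Shokurov X-method adapted to quasi-log pairs. First I would carry out a non-vanishing step: for $m$ sufficiently large there exists a nonzero section of $\mathcal O_X(mD)$, whose construction rests on the vanishing of $R^i\pi_*\mathcal O_X(L)$ when $L-(K_X+\Delta)$ is $\pi$-ample, supplied by Theorem \ref{vani-thm}. Second, to promote the existence of a section to $\pi$-generation, I would run the usual induction on the dimension of qlc centers: after perturbing the auxiliary data inside the ambient simple normal crossing pair and applying a Kawamata--Shokurov tie-breaking trick, one isolates a minimal qlc center $W$ along which the expected member of the relevant linear system has multiplicity exactly one. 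By Theorem \ref{thm-adj}, $[W,(K_X+\Delta)|_W]$ inherits a quasi-log structure with only qlc singularities, so by induction on $\dim W$ the base point free theorem holds on $W$. Sections are then lifted from $W$ to $X$ by applying Theorem \ref{vani-thm3} (or the stratum-level vanishing of Theorem \ref{vani-thm4}) to the ideal sheaf of $W$ twisted by $\mathcal O_X(mD)$, which kills the obstruction in $R^1\pi_*$.

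The main obstacle in executing this blueprint is the perturbation step, since $(X,\Delta)$ is reducible and non-normal: one cannot usefully perturb $\Delta$ directly on $X$. Instead, one must lift the construction to the simple normal crossing pair $(Z,\Delta_Z)$ produced by Theorem \ref{main2}, perform the perturbation of $\Delta_Z$ inside the smooth ambient $M$ (where genuine resolution-theoretic techniques are available), and then use property (5) of Theorem \ref{main2} to identify the resulting minimal stratum downstairs as an slc stratum of $(X,\Delta)$. Once this bookkeeping is set up, the rest of the argument is a direct translation of the X-method for quasi-log log canonical pairs (cf.~\cite{fujino-fund}) to the present situation, with all of the semi log canonical input absorbed into Theorems \ref{main2}, \ref{vani-thm}, \ref{vani-thm3}, and \ref{thm-adj}; no further Hodge-theoretic work is needed beyond what has already been invested in those statements.
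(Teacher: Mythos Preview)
Your approach is essentially the paper's: reduce to the base point free theorem for quasi-log varieties via Theorem \ref{main2}, after which the paper simply cites \cite[Theorem 3.36]{book} as a black box while you sketch the underlying X-method. The one technical point you omit is that Theorem \ref{main2} requires $X$ to be quasi-projective, so you must first shrink $S$ to an affine open (since $\pi$-generation is local on $S$ and $\pi$ is projective, this makes $X$ quasi-projective).
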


We can prove the base point free theorem of Reid--Fukuda type 
for semi log canonical pairs (see also \cite[Theorem 0.1]{fujino-rf}, 
\cite[Section 5]{fujino-base}, and so on). It is a slight generalization of 
Theorem \ref{bpf}. 
Note that Theorem \ref{bpf} is sufficient for the contraction theorem in Theorem \ref{cone-contraction}. 

\begin{thm}[Base point free theorem II]\label{bpf2} 
Let $(X, \Delta)$ be a semi log canonical pair 
and let $\pi:X\to S$ be a projective morphism onto 
an algebraic variety $S$. 
Let $D$ be a $\pi$-nef Cartier divisor on $X$. 
Assume that 
$aD-(K_X+\Delta)$ is nef and log big over $S$ with respect to $(X, \Delta)$ 
for some real number $a>0$. 
Then $\mathcal O_X(mD)$ is $\pi$-generated for every $m\gg 0$, that is, 
there exists a positive integer $m_0$ such that 
$\mathcal O_X(mD)$ is $\pi$-generated for every $m\geq m_0$. 
\end{thm}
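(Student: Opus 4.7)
The plan is to deduce Theorem \ref{bpf2} from the Reid--Fukuda version of the base point free theorem for quasi-log varieties (cf.~\cite{fujino-fund}). By Theorem \ref{main2}, $[X, K_X+\Delta]$ carries a natural quasi-log structure with only qlc singularities whose qlc centers are precisely the slc strata of $(X,\Delta)$. The hypothesis that $aD-(K_X+\Delta)$ is nef and log big over $S$ with respect to $(X,\Delta)$ is therefore identical to the corresponding hypothesis with respect to the induced quasi-log structure, and the conclusion is exactly the quasi-log base point free theorem.

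To prove the quasi-log statement, I would run Kawamata's X-method by Noetherian induction on the finite set of qlc centers. Let $X'$ denote the union of all qlc centers of $[X,K_X+\Delta]$ that are proper subsets of $X$. By Theorem \ref{thm-adj}, $[X',(K_X+\Delta)|_{X'}]$ inherits a quasi-log structure with only qlc singularities, and by the inductive hypothesis $\mathcal{O}_{X'}(mD|_{X'})$ is $\pi|_{X'}$-generated for every $m\gg 0$. For such $m$, the decomposition
$$mD-(K_X+\Delta)=(m-a)D+\bigl(aD-(K_X+\Delta)\bigr)$$
shows that $mD-(K_X+\Delta)$ is nef and log big over $S$, so Theorem \ref{vani-thm3} yields a surjection $\pi_*\mathcal{O}_X(mD)\twoheadrightarrow (\pi|_{X'})_*\mathcal{O}_{X'}(mD|_{X'})$; in particular the base locus of $|mD|$ avoids $X'$, and is a closed subset of $X\setminus X'$.

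To eliminate base points outside $X'$, I would combine a Shokurov-style non-vanishing argument, whose vanishing input is Theorem \ref{vani-thm2}, with the standard perturbation trick: near a hypothetical base point $x\notin X'$, pick a general effective member $G\in|m_0 D|$ of sufficiently high multiplicity at $x$, tune a rational number $c>0$ so that adding $cG$ to the boundary creates a new qlc center passing through $x$, and verify that $aD-(K_X+\Delta)-cG$ remains nef and log big on the enlarged stratification. Theorem \ref{vani-thm3} applied to this new qlc center then lifts a section of $mD$ from the center to $X$, killing the base point.

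The main obstacle is precisely this perturbation step. One must ensure simultaneously that the modified boundary still defines a quasi-log structure with only qlc singularities, that the residual divisor $aD-(K_X+\Delta)-cG$ stays nef and log big over $S$ on \emph{every} qlc center of the modified structure, and that the set of qlc centers grows in a controlled way so that the induction terminates in finitely many steps. The adjunction formula of Theorem \ref{thm-adj} is what keeps the argument inside the category of quasi-log varieties with qlc singularities at each stage, enabling uniform application of the Reid--Fukuda vanishing (Theorem \ref{vani-thm2}) throughout the induction; without it one could not propagate the log bigness hypothesis along the descent on qlc centers.
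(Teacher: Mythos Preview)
Your approach is essentially the paper's: reduce to the quasi-log setting via Theorem~\ref{main2} and invoke the Reid--Fukuda base point free theorem for qlc pairs. The paper's proof is a three-line reduction: shrink $S$ to affine so that $X$ is quasi-projective, apply Theorem~\ref{main2}, and then cite \cite[Theorem~3.36 and Theorem~4.1]{book} directly rather than re-deriving the quasi-log Reid--Fukuda statement. You omit the first step---Theorem~\ref{main2} requires $X$ quasi-projective, so the reduction to affine $S$ is not optional---but otherwise your outline is on target, and your honest identification of the perturbation step as the technical crux matches the paper's own Remark~\ref{rem15}.
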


From some technical viewpoints, 
we give an important remark. 

\begin{rem}\label{rem15}
We can prove Theorem \ref{bpf} without using the theory of quasi-log varieties. 
The proofs of the non-vanishing theorem and the base point free 
theorem in \cite{fujino-fund} can be adapted to our situation in Theorem \ref{bpf} once we 
adopt Theorem \ref{vani-thm4}. 
For the details, see \cite[Sections 12 and 13]{fujino-fund}. 
On the other hand, the theory of quasi-log varieties seems to be 
indispensable for the proof of Theorem \ref{bpf2}. 
Therefore, the proof of Theorem \ref{bpf2} is much harder than that of 
Theorem \ref{bpf}. 
\end{rem}

It is known that the rationality theorem holds for quasi-log varieties. 
Therefore, as a consequence of Theorem \ref{main2}, we obtain the 
rationality theorem for semi log canonical pairs (cf.~\cite[Theorem 4-1-1]{kmm}). 
Note that we can obtain 
Theorem \ref{rational} as an application of Theorem \ref{vani-thm3} and 
that the proof of Theorem \ref{rational} does not need the theory of quasi-log varieties 
(see \cite[Theorem 8.1 and the proof of Theorem 15.1]{fujino-fund}). 

\begin{thm}[Rationality theorem]\label{rational} 
Let $(X, \Delta)$ be a semi log canonical pair and let $\pi:X\to S$ 
be a projective morphism onto an algebraic variety $S$. 
Let $H$ be a $\pi$-ample Cartier divisor on $X$. 
Assume that $K_X+\Delta$ is not $\pi$-nef 
and that there is a positive integer $a$ such that 
$a(K_X+\Delta)$ is $\mathbb R$-linearly equivalent to 
a Cartier divisor. 
Let $r$ be a positive real number such that 
$H+r(K_X+\Delta)$ is $\pi$-nef but not 
$\pi$-ample. 
Then $r$ is a rational 
number, and in reduced form, 
it has denominator at most $a(\dim X+1)$.  
\end{thm}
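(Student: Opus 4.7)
The plan is to follow Kawamata's original proof of the rationality theorem (cf.~\cite[Theorem 4-1-1]{kmm}), in the form adapted to log canonical pairs in \cite[Theorem 15.1]{fujino-fund}, with Theorem \ref{vani-thm3} serving as the vanishing input that replaces Kawamata--Viehweg--Nadel. As the author remarks, the quasi-log formalism itself is not needed here: only the ideal-sheaf vanishing for unions of slc strata is required.

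First I would fix a Cartier divisor $D$ on $X$ with $D \sim_{\mathbb R} a(K_X+\Delta)$ and form $L(p,q) := pH + qD$ for integers $p,q$. By Snapper's theorem, the relative Euler characteristic $P(p,q) := \chi_\pi(\mathcal O_X(L(p,q)))$ is a numerical polynomial in $p,q$ of total degree at most $n := \dim X$. Since $H$ is $\pi$-ample and $H + r(K_X+\Delta)$ is $\pi$-nef, the divisor $H + t(K_X+\Delta)$ is $\pi$-ample for every $0 \le t < r$; hence $L(p,q) - (K_X+\Delta) \sim_{\mathbb R} pH + (qa-1)(K_X+\Delta)$ is $\pi$-ample in the region $\{p > 0,\ qa-1 < pr\}$, and Theorem \ref{vani-thm} yields $R^i\pi_*\mathcal O_X(L(p,q)) = 0$ for all $i > 0$ there. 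Throughout this region the rank of $\pi_*\mathcal O_X(L(p,q))$ is therefore given by the polynomial $P(p,q)$.

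Next, suppose for contradiction that either $r \notin \mathbb Q$ or $r = u/v$ with $\gcd(u,v)=1$ and $v > a(n+1)$. Standard Dirichlet approximation produces arbitrarily large positive integers $p_0$ and a positive integer $q_0$ with $0 < q_0 a - p_0 r < 1$, such that $L(p_0, q_0)$ fails to be $\pi$-nef while all of $L(p_0+j, q_0)$ for $j = 0, \dots, n$ lie in the vanishing region above. Since $P(\,\cdot\,, q_0)$ is a polynomial of degree at most $n$, it cannot vanish at $n+1$ consecutive integers without being identically zero; combined with the freedom to vary $q_0$, this forces $P \not\equiv 0$, and in fact produces sharp pairs $(p_0, q_0)$ on the non-nef side of the boundary with $h^0_\pi(\mathcal O_X(L(p_0, q_0))) > 0$.

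Any such relative global section must vanish on the locus where $L(p_0,q_0)$ fails to be $\pi$-nef. Let $X'$ be the union of the slc strata of $(X,\Delta)$ contained in that locus. Since $L(p_0, q_0) - (K_X+\Delta)$ is $\pi$-ample, hence $\pi$-nef and $\pi$-log big, Theorem \ref{vani-thm3} gives $R^i\pi_*(\mathcal I_{X'}\otimes \mathcal O_X(L(p_0, q_0))) = 0$ for $i>0$, so relative sections restrict surjectively onto $X'$. Theorem \ref{thm-adj} provides the induced quasi-log structure on $[X', (K_X+\Delta)|_{X'}]$, and an induction on $\dim X'$ reduces the problem to a strictly lower-dimensional semi log canonical setting, yielding a contradiction with the Diophantine smallness of $q_0a - p_0 r$. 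The main obstacle lies in this final base-locus step: since $X$ is in general reducible and non-normal, we cannot cut down by general hyperplane sections and must instead descend stratum by stratum --- precisely the role played jointly by Theorems \ref{vani-thm3} and \ref{thm-adj}.
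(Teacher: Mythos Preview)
Your approach is essentially the paper's: both defer to the proof of \cite[Theorem 15.1]{fujino-fund} (itself the Kawamata argument of \cite[Theorem 4-1-1]{kmm}) with Theorem \ref{vani-thm3} supplying the ideal-sheaf vanishing, and both note that the quasi-log formalism is not required. One small divergence: in your final step you invoke Theorem \ref{thm-adj} to put a quasi-log structure on $X'$ and then speak of an induction in a ``lower-dimensional semi log canonical setting'', whereas the paper (following \cite{fujino-fund}) carries out the descent entirely on $X$ via repeated use of Theorem \ref{vani-thm3} and the Euler-characteristic polynomials, without ever appealing to adjunction or a qlc structure on $X'$; note also that $X'$ is in general only qlc, not slc, so your phrasing there should be adjusted.
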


By using Theorem \ref{bpf} and Theorem \ref{rational}, 
we obtain the cone and contraction theorem for semi log canonical 
pairs. 

\begin{thm}[Cone and contraction theorem]\label{cone-contraction}
Let $(X, \Delta)$ be a semi log canonical pair 
and let $\pi:X\to S$ be a projective 
morphism onto an algebraic variety $S$. 
Then we have the following properties. 
\begin{itemize}
\item[(1)] There are {\em{(}}countably many{\em{)}} rational 
curves $C_j\subset X$ such that 
$0<-(K_X+\Delta)\cdot C_j\leq 2\dim X$, $\pi(C_j)$ is a point, 
and $$
\overline {NE}(X/S)=\overline {NE}(X/S)_{(K_X+\Delta)\geq 0}+\sum \mathbb R_{\geq 0}[C_j]. 
$$
\item[(2)] For any $\varepsilon >0$ and any $\pi$-ample 
$\mathbb R$-divisor 
$H$, 
$$
\overline {NE}(X/S)=\overline {NE}(X/S)_{(K_X+\Delta+\varepsilon H)\geq 0}+\sum _{\text{finite}}
\mathbb R_{\geq 0}
[C_j]. 
$$
\item[(3)] 
Let $F\subset \overline {NE}(X/S)$ be a $(K_X+\Delta)$-negative 
extremal face. 
Then there is a unique morphism $\varphi_F:X\to Z$ over $S$ such that 
$(\varphi_F)_*\mathcal O_X\simeq \mathcal O_Z$, 
$Z$ is projective over $S$, and that an irreducible 
curve $C\subset X$ where  
$\pi(C)$ is a point is mapped to a point by $\varphi_F$ if and only if $[C]\in F$. 
The map $\varphi_F$ is called the {\em{contraction}} associated to 
$F$. 
\item[(4)] Let $F$ and $\varphi_F$ be as in {\em{(3)}}. 
Let $L$ be a line bundle on $X$ such that 
$L\cdot C=0$ for every curve $[C]\in F$. Then there is a line bundle $M$ on $Z$ such that 
$L\simeq \varphi_F^*M$. 
\end{itemize}
\end{thm}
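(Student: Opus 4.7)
The strategy is to apply Theorem \ref{main2} to equip $[X, K_X+\Delta]$ with its natural quasi-log structure with only qlc singularities, and then run the standard Kawamata--Shokurov--Reid proof of the cone and contraction theorem inside the quasi-log framework, with the rationality theorem (Theorem \ref{rational}) and the base point free theorem (Theorem \ref{bpf}) as the two essential inputs.

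For (1), I first establish a polyhedral description of $\overline{NE}(X/S)$ on the half-space where $K_X+\Delta$ is negative by the familiar Kawamata argument: pick a $\pi$-ample Cartier divisor $H$, and for each codimension-one $(K_X+\Delta)$-negative face construct a supporting function of the form $H + r(K_X+\Delta)$ whose slope $r$ is rational with denominator bounded by Theorem \ref{rational}. This denominator bound prevents accumulation of such faces away from the hyperplane $(K_X+\Delta)^{\perp}$, so countably many rational rays $R_j = \mathbb R_{\geq 0}[C_j]$ exhaust the $(K_X+\Delta)$-negative part of the cone. To exhibit rational curves achieving the length estimate $-(K_X+\Delta)\cdot C_j \leq 2\dim X$, I lift via the morphism $h : Z \to X$ of Theorem \ref{main2}, run Mori's bend-and-break on a suitable irreducible component of $Z$ mapping onto a curve in the chosen extremal ray, and push the resulting rational curve down to $X$. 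This length estimate, which requires descending bend-and-break across the non-normal locus of $X$ while keeping control of intersection numbers with $K_X+\Delta$, is the step I expect to be the main obstacle. Part (2) then follows from (1) by a standard convexity argument in $\overline{NE}(X/S)$ together with the no-accumulation statement just obtained.

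For (3), given a $(K_X+\Delta)$-negative extremal face $F$, combining (1), (2), and Theorem \ref{rational} produces a $\pi$-nef Cartier divisor $D$ such that $F = \overline{NE}(X/S) \cap D^{\perp}$ and $aD - (K_X+\Delta)$ is $\pi$-ample for some $a > 0$. Theorem \ref{bpf} then ensures that $\mathcal O_X(mD)$ is $\pi$-generated for all $m \gg 0$, and the Stein factorization over $S$ of the associated morphism yields $\varphi_F : X \to Z$ with $(\varphi_F)_*\mathcal O_X \simeq \mathcal O_Z$ and $Z$ projective over $S$; uniqueness is automatic from the characterization of $\varphi_F$ by its fibers. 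For (4), let $L$ be a line bundle with $L \cdot C = 0$ for every $[C] \in F$. I apply Theorem \ref{bpf} to both $mD + L$ and $mD - L$, each of which is $\pi$-nef and satisfies the required ampleness hypothesis after absorbing $L$ into $aD - (K_X+\Delta)$ for $m \gg 0$; the two resulting morphisms each factor through $\varphi_F$, and the standard Shokurov--Kawamata descent argument then produces the line bundle $M$ on $Z$ with $L \simeq \varphi_F^* M$.
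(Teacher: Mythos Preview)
Your overall architecture---rationality theorem plus base point free theorem feeding into the standard Kawamata--Shokurov cone argument, then contracting via Stein factorization, then descending line bundles---is exactly what the paper does: it simply cites that the proof of \cite[Theorem 16.1]{fujino-fund} goes through once Theorems \ref{rational} and \ref{bpf} are in hand. So for parts (2), (3), (4), and the polyhedral part of (1), you and the paper agree.

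The real divergence is in the length estimate in (1), and here your approach has a gap that the paper avoids. You propose to lift to an irreducible component $Z_i$ of the quasi-log resolution $h:Z\to X$ from Theorem \ref{main2} and run bend-and-break there. The difficulty you yourself flag is genuine: on $Z_i$ one has $(K_Z+\Delta_Z)|_{Z_i}=K_{Z_i}+\Theta_i$ with $\Theta_i$ only a \emph{subboundary} (it includes $\Delta_Z|_{Z_i}$, which can have negative coefficients). Bend-and-break, and the standard length estimate for pairs, require an effective boundary; dropping the negative part of $\Theta_i$ moves the inequality in the wrong direction for the bound you need. Your sketch does not resolve this, and you correctly identify it as the main obstacle without closing it.

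The paper sidesteps the issue entirely by passing not to $Z$ but to the \emph{normalization} $\nu:X^\nu\to X$. By the very definition of a semi log canonical pair, $(X^\nu,\Theta)$ with $K_{X^\nu}+\Theta=\nu^*(K_X+\Delta)$ is log canonical. Given an extremal ray $R$, one first constructs $\varphi_R:X\to W$ via (3); then $-(K_{X^\nu}+\Theta)$ is $(\varphi_R\circ\nu)$-ample, so the known length estimate for \emph{lc} pairs (\cite[Theorem 18.2]{fujino-fund}) produces a rational curve $C'\subset X^\nu$ in a fiber with $-(K_{X^\nu}+\Theta)\cdot C'\le 2\dim X$, and $C=\nu(C')$ is the desired curve on $X$. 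This is both simpler and complete; I recommend replacing your $Z$-based argument for the length bound with this normalization step.
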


Although we have established the cone and contraction theorem for 
semi log canonical pairs, a simple example (see Example \ref{ex-bad}) 
shows that we can not always run the minimal model program even for 
semi log canonical {\em{surfaces}}. However, we have some nontrivial 
applications of Theorem \ref{cone-contraction} (see Section \ref{sec-mis}). 
Moreover, Kento Fujita has recently constructed 
{\em{semi-terminal modifications}} for quasi-projective 
{\em{demi-normal}} pairs by running a variant of the minimal model 
program for {\em{semi-terminal}} pairs. 
His arguments use Theorem \ref{cone-contraction} 
and Koll\'ar's gluing theory. 
For the details, see \cite{fujita2}. 

We can prove many other powerful results by translating 
the results for quasi-log varieties (see, for example, 
Corollary \ref{cor35ne}). 
For the details of the theory of quasi-log varieties, 
see \cite{book} and \cite{fujino-qlog}. 
We recommend the reader to see \cite{fujino-fund} for various vanishing 
theorems, 
the non-vanishing theorem, the base point free theorem, the cone theorem, and so on, 
for pairs $(X, \Delta)$, where 
$X$ is a {\em{normal}} variety and $\Delta$ is an effective $\mathbb R$-divisor 
on $X$ such that $K_X+\Delta$ is $\mathbb R$-Cartier. The arguments in \cite{fujino-fund} 
are independent of the theory of quasi-log varieties and 
only use {\em{normal}} varieties for the above fundamental theorems. 
In this paper, we do not need the recent advances in the minimal model 
program mainly due to Birkar--Cascini--Hacon--M\textsuperscript{c}Kernan (cf.~\cite[Part II]{hk}). 

For the abundance conjecture for semi log canonical pairs, 
see \cite{fujino-abun}, \cite{gongyo}, \cite{fujino-gongyo}, and \cite{hacon-xu}. 
These papers are independent of the techniques discussed in this paper. 
We give some results supplementary to \cite{fujino-gongyo} in Section \ref{sec-mis}. 
In this introduction, we explain only one result on the finiteness of automorphisms. 

\begin{thm}[see Theorem \ref{67}]\label{thm118}
Let $(X, \Delta)$ be a complete  semi log canonical 
pair such that 
$K_X+\Delta$ is a big $\mathbb Q$-Carteir $\mathbb Q$-divisor. 
Then 
$$
\Bir (X, \Delta)=\{ f\, |\, f:(X, \Delta)\dashrightarrow (X, \Delta)\ \text{is $B$-birational}\}
$$
is a finite group. In particular, 
$$
\Aut (X, \Delta)=\{ f\, |\, f: X\to X \ {\text{is an isomorphism such that}} \ 
\Delta=f_*^{-1}\Delta\}
$$ 
is a finite group. 
\end{thm}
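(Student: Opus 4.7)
The plan is to reduce to the known finiteness theorem for $B$-birational groups of log canonical pairs of log general type via the normalization. Let $\nu \colon X^\nu \to X$ be the normalization and write $X^\nu = \bigsqcup_{i=1}^{r} X_i$ for its decomposition into irreducible components. Since $K_X+\Delta$ is $\mathbb{Q}$-Cartier, I define $\Theta$ by $K_{X^\nu}+\Theta = \nu^*(K_X+\Delta)$; by definition of semi log canonicity, $\Theta = \nu^{-1}_*\Delta + C$ (where $C$ is the conductor on $X^\nu$) is effective and each pair $(X_i,\Theta_i)$ is log canonical. Because bigness is preserved by finite pullback, each $K_{X_i}+\Theta_i$ is big and $\mathbb{Q}$-Cartier.

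Next, I would show that the assignment $f \mapsto \tilde f$ defines an injective group homomorphism $\Bir(X,\Delta) \hookrightarrow \Bir(X^\nu,\Theta)$. Given $f \in \Bir(X,\Delta)$, the composition $f \circ \nu \colon X^\nu \dashrightarrow X$ is a rational map from a normal variety, so by the universal property of normalization it factors uniquely through a birational self-map $\tilde f \colon X^\nu \dashrightarrow X^\nu$. To check that $\tilde f$ is $B$-birational for $(X^\nu,\Theta)$, take a common resolution $p,q\colon W \to X$ with $q = f \circ p$ and $p^*(K_X+\Delta) = q^*(K_X+\Delta)$; the induced lifts $\tilde p, \tilde q \colon W \to X^\nu$ (obtained again from the universal property after possibly replacing $W$ by a suitable model) realize $\tilde f$ and the crepant identity pulls back to $\tilde p^*(K_{X^\nu}+\Theta)=\tilde q^*(K_{X^\nu}+\Theta)$.

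The lift $\tilde f$ permutes the components $X_1,\dots,X_r$, yielding a homomorphism $\rho \colon \Bir(X,\Delta) \to \mathrm{Sym}_r$ whose target is finite. The kernel of $\rho$, which consists of $B$-birational self-maps preserving each component, embeds by restriction into $\prod_{i=1}^{r} \Bir(X_i,\Theta_i)$. Each factor $\Bir(X_i,\Theta_i)$ is finite by the classical finiteness theorem for $B$-birational automorphism groups of log canonical pairs with big log canonical divisor (see \cite{fujino-gongyo}, after Hacon--M\textsuperscript{c}Kernan--Xu in the klt case). Therefore $\ker\rho$ is finite and hence so is $\Bir(X,\Delta)$. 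Since $\Aut(X,\Delta)$ is a subgroup of $\Bir(X,\Delta)$, it too is finite.

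The main obstacle is the middle step: lifting $B$-birational self-maps of the non-normal pair $(X,\Delta)$ to the normalization in a way compatible with the crepant identity. The delicate point is that the conductor $C$ contributes to $\Theta$, so one must verify that a common resolution $W \to X$ produces, perhaps after further blow-up, a common model mapping to $X^\nu$ on which the relation $\tilde p^*(K_{X^\nu}+\Theta)=\tilde q^*(K_{X^\nu}+\Theta)$ actually holds; this is a careful bookkeeping argument involving the behaviour of the different under $\nu$. A secondary point is citing the finiteness result for normal log canonical pairs of log general type in the exact form needed, but this is available in \cite{fujino-gongyo} and its references.
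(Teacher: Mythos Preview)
Your approach is correct and shares the paper's core idea: pass to a crepant birational normal model and invoke the finiteness result from \cite{fujino-gongyo}. The paper's execution is shorter in two respects. First, instead of stopping at the normalization, it goes straight to a smooth projective resolution $f:Y\to X$ with $K_Y+\Delta_Y=f^*(K_X+\Delta)$ and asserts that $\Bir(X,\Delta)\simeq\Bir(Y,\Delta_Y)$ simply because $f$ is birational; the lifting you flag as the ``main obstacle'' is treated as immediate, and indeed it is a two-line check: since $K_{X^\nu}+\Theta=\nu^*(K_X+\Delta)$, any common resolution witnessing the crepant identity for one model witnesses it for the other, so the correspondence is a bijection rather than merely the injection you construct. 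Second, the paper applies \cite[Corollary 3.13 and Remark 3.16]{fujino-gongyo} directly to the possibly disconnected pair $(Y,\Delta_Y)$, which already absorbs your permutation-of-components and product-of-$\Bir(X_i,\Theta_i)$ argument into the cited statement. Your route makes the reduction more explicit, at the cost of a little extra bookkeeping that the paper avoids by choosing the right model and the right citation.
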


For the details, see Theorem \ref{67} below. 
Theorem \ref{thm118} seems to be an important property when we consider moduli spaces of 
{\em{stable pairs}}. 

By combining the arguments in the proof of Theorem \ref{main2} with our new semi-positivity 
theorem in \cite{fuji-fuji} (see also \cite{ffs}), 
we obtain the following semi-positivity theorem in \cite{fujino-sp}. 

\begin{thm}[{see \cite[Theorem 1.8]{fujino-sp}}]
\label{new-sp} 
Let $X$ be an equidimensional variety which satisfies Serre's $S_2$ condition and is Gorenstein
in codimension one.
Let $f:X\to C$ be a projective
surjective morphism onto a smooth projective curve $C$ such that
every irreducible component of
$X$ is dominant onto $C$.
Assume that there exists a non-empty Zariski open set $U$ of $C$ such that
$f^{-1}(U)$ has only semi log canonical singularities.
Then $f_*\omega_{X/C}$ is semi-positive. 

Assume further that $\omega_{X/C}^{[k]}:=(\omega_{X/C}^{\otimes k})^{**}$ 
is locally free and $f$-generated for some positive
integer $k$.
Then $f_*\omega_{X/C}^{[m]}$ is semi-positive for every $m\geq 1$. 
\end{thm}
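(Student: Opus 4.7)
The strategy is to reduce the problem to the simple normal crossing setting and apply the semi-positivity theorem for SNC pairs over a curve established in \cite{fuji-fuji}, using Theorem \ref{main2} as the bridge.

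First I would apply the construction in the proof of Theorem \ref{main2} to the quasi-projective slc pair $(f^{-1}(U), 0)$, and then extend it across the fibers over $C\setminus U$ by embedded resolution of singularities for reducible varieties (as used for Theorem \ref{main2}). This should produce a smooth quasi-projective $M$, an SNC divisor $Z\subset M$ with an SNC boundary $\Delta_Z$ on $Z$, and a projective surjective morphism $h:Z\to X$ satisfying
$$K_Z+\Delta_Z\sim_{\mathbb{R}} h^*K_X \quad\text{and}\quad h_*\omega_Z(\Delta_Z^{=1})\simeq \omega_X.$$
Over $f^{-1}(U)$ both properties are immediate from Theorem \ref{main2}, and globally the isomorphism is deduced by checking that the canonical map $h_*\omega_Z(\Delta_Z^{=1})\to \omega_X$ is an isomorphism in codimension one, combined with the reflexivity of $\omega_X$ which follows from the $S_2$ and Gorenstein-in-codimension-one hypotheses on $X$. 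Since $h$ is surjective and every irreducible component of $X$ dominates $C$, every component of $Z$ dominates $C$ as well.

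Next I would apply the semi-positivity theorem of \cite{fuji-fuji} to the composition $g:=f\circ h:(Z,\Delta_Z)\to C$, obtaining that $g_*\omega_{Z/C}(\Delta_Z^{=1})$ is semi-positive on $C$. Combining this with the projection formula and the identity $h_*\omega_Z(\Delta_Z^{=1})\simeq \omega_X$,
$$g_*\omega_{Z/C}(\Delta_Z^{=1})\;\simeq\; f_*\bigl(h_*\omega_Z(\Delta_Z^{=1})\otimes f^*\omega_C^{-1}\bigr)\;\simeq\; f_*\omega_{X/C},$$
so $f_*\omega_{X/C}$ is semi-positive, which proves the first assertion.

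For the second assertion I would run a cyclic covering argument. Since $\mathcal{L}:=\omega_{X/C}^{[k]}$ is locally free and $f$-generated, a general element of a suitable base-point-free linear system associated to $\mathcal{L}$ gives rise, for each $m\geq 1$, to a finite cover $\pi:X'\to X$ such that $X'$ again satisfies the hypotheses of the theorem (possibly after shrinking $U$) and $\omega_{X/C}^{[m]}$ is realized as a direct summand of $\pi_*\omega_{X'/C}$. Applying the first assertion to $f\circ\pi:X'\to C$ gives semi-positivity of $(f\circ\pi)_*\omega_{X'/C}$, from which semi-positivity of its direct summand $f_*\omega_{X/C}^{[m]}$ follows.

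The main obstacle is in the very first step: the non-slc locus of $X$ need not be of codimension $\geq 2$, so one cannot simply $S_2$-extend the identity $h_*\omega_Z(\Delta_Z^{=1})\simeq\omega_X$ from $f^{-1}(U)$ to $X$. One must arrange the resolution $h$ over the non-slc fibers carefully enough that this canonical map is already an isomorphism in codimension one on $X$, at which point reflexivity of $\omega_X$ finishes the job. The cyclic cover step also requires verifying that the structural hypotheses of the theorem, in particular the slc property on a dense open subset of the fibers over $C$, descend to the cover $X'$, which is a further technical point.
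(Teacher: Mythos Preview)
The paper does not itself prove Theorem \ref{new-sp}; it only states the result, attributes it to \cite{fujino-sp}, and remarks in one sentence that it is obtained ``by combining the arguments in the proof of Theorem \ref{main2} with our new semi-positivity theorem in \cite{fuji-fuji}''. Your proposal follows exactly this indicated route---build the simple normal crossing model $h:(Z,\Delta_Z)\to X$ as in the proof of Theorem \ref{main2}, apply the semi-positivity theorem of \cite{fuji-fuji} to $Z\to C$, push forward, and run a cyclic cover for $m\geq 2$---so at the level of strategy you agree with the paper.

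Since there is no proof here to compare against line by line, let me only flag where your sketch would need tightening before it becomes a proof. First, the identity $h_*\omega_Z(\Delta_Z^{=1})\simeq\omega_X$ is not what Theorem \ref{main2} records; that theorem gives $h_*\mathcal O_Z(\lceil-\Delta_Z^{<1}\rceil)\simeq\mathcal O_X$, and converting this into a statement about $\omega_Z$ involves the effective $h$-exceptional divisor $\lceil-\Delta_Z^{<1}\rceil$. Moreover the paper emphasizes that $h$ need not be birational, so one cannot simply argue that exceptional divisors are invisible after pushforward. Second---and you have correctly isolated this as the main obstacle---over the non-slc fibers $K_X$ is not even assumed to be $\mathbb Q$-Cartier, so the relation $K_Z+\Delta_Z\sim h^*K_X$ has no meaning there, and one must instead work directly with the reflexive sheaf $\omega_X$; how \cite{fujino-sp} arranges the model over those fibers is not visible in the present paper. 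Third, in the cyclic-cover step you should also check that $X'$ inherits the $S_2$ and Gorenstein-in-codimension-one hypotheses, not just the generic slc condition.
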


Theorem \ref{new-sp} implies that the moduli functor of stable varieties is semi-positive in the 
sense of Koll\'ar (see \cite[2.4.~Definition]{kollar-proj}). Therefore, 
Theorem \ref{new-sp} plays crucial roles for the projectivity of the moduli spaces of 
stable varieties. 
For the details, see \cite{kollar-proj}, \cite{fuji-fuji}, and \cite{fujino-sp}. The reader can find some generalizations of 
Theorem \ref{new-sp} in \cite{fujino-sp}. 

Finally, in this 
paper, we are mainly interested in {\em{non-normal}} algebraic varieties. 
So we have to be careful about some basic definitions. 

\begin{say}[Big $\mathbb R$-Cartier $\mathbb R$-divisors]
Let $X$ be a 
{\em{non-normal}} complete irreducible algebraic variety and let 
$D$ be a $\mathbb Q$-Cartier $\mathbb Q$-divisor on $X$ such that 
$m_0D$ is Cartier for some positive integer $m_0$. 
We can consider the asymptotic behavior of $\dim H^0(X, \mathcal O_X(mm_0D))$ for 
$m\to \infty$ since $\mathcal O_X(mm_0D)$ is a well-defined line bundle on $X$ associated to $mm_0D$. 
Therefore, there are no difficulties to define big $\mathbb Q$-Cartier $\mathbb Q$-divisors on $X$. 
Let $B$ be  an $\mathbb R$-Cartier $\mathbb R$-divisor, that is, 
a finite $\mathbb R$-linear combination of Cartier divisors, on $X$. 
In this case, there are some difficulties to consider the 
asymptotic behavior of 
$\dim H^0(X, \mathcal O_X(mB))$ for $m\to \infty$. 
It is because the meaning of $\mathcal O_X(mB)$ is not clear. 
It may happen that the support of $mB$ is contained in the singular locus of $X$. 
Therefore, we have to discuss the definition and the basic properties of big 
$\mathbb R$-Cartier $\mathbb R$-divisors 
on {\em{non-normal}} complete irreducible varieties. 
\end{say}

We summarize the contents of this paper. 
In Section \ref{sec2}, we collect some basic definitions and 
results. 
Section \ref{sec-app-qlog} contains supplementary results for the 
theory of quasi-log varieties. Section \ref{sec3} is devoted to the proof of the main theorem:~Theorem \ref{main2}. 
The proof heavily depends on the recent developments of the theory 
of partial resolution of 
singularities 
for {\em{reducible}} varieties (cf.~\cite[Section 10.4]{kollar-book}, \cite{bierstone-milman}, \cite{bierstone-p}). 
In Section \ref{sec4}, we treat the fundamental theorems in Section \ref{sec1} 
as applications of Theorem \ref{main2}. 
In Section \ref{sec-mis}, we discuss miscellaneous applications, 
for example, the base point free theorem for $\mathbb R$-divisors, 
a generalization of Koll\'ar's effective base point free theorem for semi log canonical 
pairs, Shokurov's polytope for semi log canonical pairs, depth of sheaves on 
slc pairs, semi log canonical morphisms, 
the finiteness of $B$-birational automorphisms for stable pairs, and so on. 
In Section \ref{sec-appendix}, which is an appendix, 
we discuss the notion of big $\mathbb R$-divisors 
on {\em{non-normal}} algebraic varieties because there are no good references on this topic. 

We fix the basic notation. For the standard notation of the log minimal model 
program, see, for example, \cite{fujino-fund}.  

\begin{notation} 
Let $B_1$ and $B_2$ be two $\mathbb R$-Cartier $\mathbb R$-divisors 
on a variety $X$. Then $B_1$ is linearly (resp.~$\mathbb Q$-linearly, or $\mathbb R$-linearly) equivalent to 
$B_2$, denoted by $B_1\sim B_2$ (resp.~$B_1\sim _{\mathbb Q} B_2$, or $B_1\sim _{\mathbb R}B_2$) if 
$$
B_1=B_2+\sum _{i=1}^k r_i (f_i)
$$ 
such that $f_i \in \Gamma (X, \mathcal K_X^*)$ and $r_i\in \mathbb Z$ (resp.~$r_i \in \mathbb Q$, or $r_i \in 
\mathbb R$) for every $i$. Here, $\mathcal K_X$ is the sheaf of total quotient rings of 
$\mathcal O_X$ and $\mathcal K_X^*$ is the sheaf of invertible elements in the sheaf of rings $\mathcal K_X$. 
We note that 
$(f_i)$ is a {\em{principal Cartier divisor}} associated to $f_i$, that is, 
the image of $f_i$ by 
$
\Gamma (X, \mathcal K_X^*)\to\Gamma (X, \mathcal K_X^*/\mathcal O_X^*)$, 
where $\mathcal O_X^*$ is the sheaf of invertible elements in $\mathcal O_X$. 

Let $f:X\to Y$ be a morphism. 
If there is an $\mathbb R$-Cartier $\mathbb R$-divisor $B$ on $Y$ such that 
$$
B_1\sim _{\mathbb R}B_2+f^*B, 
$$ 
then $B_1$ is said to be {\em{relatively $\mathbb R$-linearly equivalent}} to 
$B_2$. It is denoted by $B_1\sim _{\mathbb R, f}B_2$. 

When $X$ is complete, $B_1$ is {\em{numerically equivalent}} to $B_2$, denoted by 
$B_1\equiv B_2$, if $B_1\cdot C=B_2\cdot C$ for every curve $C$ on $X$. 

Let $D$ be a $\mathbb Q$-divisor (resp.~an $\mathbb R$-divisor) 
on an equidimensional variety $X$, that is, 
$D$ is a finite formal $\mathbb Q$-linear (resp.~$\mathbb R$-linear) combination 
$$D=\sum _i d_i D_i$$ of irreducible 
reduced subschemes $D_i$ of codimension one. 
We define the {\em{round-up}} $\lceil D\rceil =\sum _i \lceil d_i \rceil D_i$ (resp.~{\em{round-down}} 
$\lfloor D\rfloor =\sum _i \lfloor d_i \rfloor D_i$), where 
every real number $x$, $\lceil x\rceil$ (resp.~$\lfloor x\rfloor$) is the integer 
defined by $x\leq \lceil x\rceil <x+1$ 
(resp.~$x-1<\lfloor x\rfloor \leq x$). The 
{\em{fractional part}} $\{D\}$ of $D$ denotes $D-\lfloor D\rfloor$. We set 
$$D^{<1}=\sum _{d_i<1}d_i D_i,  \quad \text{and}\quad D^{=1}=\sum _{d_i=1}D_i.$$ 
We call $D$ a {\em{boundary}} (resp.~{\em{subboundary}}) $\mathbb R$-divisor if 
$0\leq d_i\leq 1$ (resp.~$d_i\leq 1$) for every $i$. 

Let $X$ be a normal variety and let $\Delta$ be an 
$\mathbb R$-divisor on $X$ 
such that $K_X+\Delta$ is $\mathbb R$-Cartier. 
Let $f:Y\to X$ be 
a resolution such that $\Exc(f)\cup f^{-1}_*\Delta$, 
where $\Exc (f)$ is the exceptional locus of $f$ 
and $f^{-1}_*\Delta$ is 
the strict transform of $\Delta$ on $Y$,  
has a simple normal crossing support. We can 
write 
$$K_Y=f^*(K_X+\Delta)+\sum _i a_i E_i. 
$$
We say that $(X, \Delta)$ 
is {\em{sub log canonical}} ({\em{sub lc}}, for short) if $a_i\geq -1$ for every $i$. 
We usually write $a_i= a(E_i, X, \Delta)$
and call it the {\em{discrepancy coefficient}} of 
$E_i$ with respect to $(X, \Delta)$. 
If $(X, \Delta)$ is sub log canonical and $\Delta$ is effective, then 
$(X, \Delta)$ is called {\em{log canonical}} ({\em{lc}}, for short). 
We note that we can define $a(E_i, X, \Delta)$ in more general 
settings (see \cite[Definition 2.4]{kollar-book}). 

If $(X, \Delta)$ is sub log canonical and 
there exist a resolution $f:Y\to X$ and a divisor $E$ on $Y$ such 
that $a(E, X, \Delta)=-1$, then $f(E)$ is called a 
{\em{log canonical center}} (an {\em{lc center}}, for short) with respect to $(X, \Delta)$. 

Let $X$ be a smooth projective variety and let $D$ be an 
$\mathbb R$-Cartier $\mathbb R$-divisor on $X$. 
Then $\kappa (X, D)$ denotes {\em{Iitaka's $D$-dimension}} of 
$D$ (see, for example, \cite[Chapter II.~3.2.~Definition]{nakayama2}). 

A pair $[X, \omega]$ consists of a 
scheme $X$ and an $\mathbb R$-Carteir $\mathbb R$-divisor 
$\omega$ on $X$. 
In this paper, $X$ is always a {\em{variety}}, that is, 
$X$ is a reduced separated scheme of finite 
type over $\Spec \mathbb C$.
\end{notation}

\begin{ack}
The author was partially supported by the Grant-in-Aid for Young Scientists 
(A) $\sharp$24684002 from JSPS. 
He would like to thank Professor Noboru Nakayama for discussions on 
the topics in Section \ref{sec-appendix} and Kento Fujita for informing him of his interesting 
example. 
He would like to 
thank Professor J\'anos Koll\'ar for giving him a preliminary version of his book \cite{kollar-book}. 
He would like to thank Professor 
Edward Bierstone for sending him \cite{bierstone-p}, 
which is a key ingredient of this paper. 
Finally, he would like to thank the referee for useful comments. 
\end{ack}

We will work over $\mathbb C$, the field of complex numbers, 
throughout this paper. Note that, by the Lefschetz principle, all the results hold over 
any algebraically closed field $k$ of characteristic zero. 
In this paper, we will use the notion of quasi-log varieties introduced by Florin Ambro in 
\cite{ambro}, which has not yet been so familiar even to the experts of the log minimal model 
program. Therefore we recommend the reader to take a glance at \cite{fujino-qlog} for a gentle 
introduction to the theory of quasi-log varieties before reading this paper. 

\section{Preliminaries}\label{sec2}

In this section, we collect some basic definitions and results. 
First, let us recall the definition of {\em{conductors}}. 

\begin{defn}[Conductor]\label{def-cond} 
Let $X$ be an equidimensional variety which satisfies Serre's $S_2$ condition 
and is normal crossing in codimension one and let 
$\nu:X^\nu\to X$ be the normalization. 
Then the {\em{conductor ideal}} of $X$ is defined by 
$$
\cond_X:=\mathcal H om _{\mathcal O_X}(\nu_*\mathcal O_{X^\nu}, 
\mathcal O_X)\subset \mathcal O_X. 
$$ 
The {\em{conductor}} $\mathcal C_X$ of $X$ is the subscheme defined by $\cond_X$. 
Since $X$ satisfies Serre's $S_2$ condition and 
$X$ is normal crossing in codimension one, 
$\mathcal C_X$ is a reduced closed subscheme of pure codimension one in $X$. 
\end{defn}

\begin{defn}[Double normal crossing points and pinch points]
An $n$-dimensional singularity $(x\in X)$ is called a 
{\em{double normal crossing point}} if it is analytically (or formally) isomorphic to 
$$
\left(0\in (x_0x_1=0)\right)\subset (0\in \mathbb C^{n+1}). 
$$
It is called a {\em{pinch point}} if 
it is analytically (or formally) isomorphic to 
$$
(0\in (x_0^2=x_1x_2^2))\subset (0\in \mathbb C^{n+1}). 
$$
\end{defn}

We recall the definition of {\em{semi log canonical pairs}}. 

\begin{defn}[Semi log canonical pairs]\label{slc-pairs}
Let $X$ be an 
equidimensional algebraic variety that 
satisfies Serre's $S_2$ condition and 
is normal crossing in codimension one. 
Let $\Delta$ be an effective $\mathbb R$-divisor 
whose support does not contain any irreducible components 
of the conductor of $X$. 
The pair $(X, \Delta)$ is called a {\em{semi log canonical pair}} (an {\em{slc pair}}, 
for short) 
if 
\begin{itemize}
\item[(1)] $K_X+\Delta$ is $\mathbb R$-Cartier, and 
\item[(2)] $(X^\nu, \Theta)$ is log canonical, 
where $\nu:X^\nu\to X$ is the normalization and $K_{X^\nu}+\Theta=
\nu^*(K_X+\Delta)$. 
\end{itemize}
\end{defn}

We introduce the notion of {\em{semi log canonical centers}}. 
It is a direct generalization of the notion of log canonical centers for 
log canonical pairs. 

\begin{defn}[Slc center]\label{defn-slccenter}
Let $(X, \Delta)$ be a semi log canonical pair and let $\nu:X^\nu\to X$ be 
the normalization. 
We set  
$$
K_{X^\nu}+\Theta=\nu^*(K_X+\Delta). 
$$
A closed subvariety $W$ of $X$ is called a {\em{semi log canonical center}} 
(an {\em{slc center}}, for short) {\em{with 
respect to $(X, \Delta)$}} if there exist a resolution of singularities $f: Y\to X^\nu$ and 
a prime divisor $E$ on $Y$ such that 
the discrepancy coefficient $a(E, X^\nu, \Theta)=-1$ and $\nu\circ f(E)=W$. 
\end{defn}

For our purposes, it is very convenient to introduce the notion of 
{\em{slc strata}} for semi log canonical pairs. 

\begin{defn}[Slc stratum]\label{stra} 
Let $(X, \Delta)$ be a semi log canonical pair. 
A subvariety $W$ of $X$ is called an {\em{slc stratum}} of 
the pair $(X, \Delta)$ if 
$W$ is a semi log canonical center with respect to $(X, \Delta)$ or $W$ is an 
irreducible component of $X$. 
\end{defn}

In this paper, we mainly discuss {\em{non-normal}} 
algebraic varieties and divisors on them. 
We have to be careful when we use Weil divisors on non-normal varieties. 

\begin{say}[Divisorial sheaves]\label{divisorial} 
Let $D$ be a Weil divisor on a semi log canonical pair $(X, \Delta)$ 
whose support does not contain any irreducible components of the conductor of $X$. 
Then the reflexive sheaf $\mathcal O_X(D)$ is well-defined. 
In this paper, we do not discuss Weil divisors whose supports 
contain some irreducible components of the conductor of $X$. 
Note that if $D$ is a Cartier divisor on $X$ then $\mathcal O_X(D)$ is 
a well-defined invertible sheaf on $X$ without any assumptions on the support of $D$. 
\end{say}

For the details, we recommend the reader to see \cite[5.6]{kollar-book} and 
\cite[Chapter 16]{fa} by Alesio Corti. 
See also \cite[Sections 1 and 2]{harts}. 
The remarks in \ref{divisorial} are sufficient for our purposes in this paper. So we 
do not pursue the definition of $\mathcal O_X(D)$ any more.  

Next, let us recall the definition of {\em{nef and log big 
$\mathbb R$-Cartier $\mathbb R$-divisors}} on semi log canonical 
pairs. For the details of big $\mathbb R$-Cartier $\mathbb R$-divisors, 
see Section \ref{sec-appendix}. 

\begin{defn}[Nef and log big divisors on slc pairs]\label{def27}
Let $(X, \Delta)$ be a semi log canonical pair and let $\pi:X\to S$ be a proper surjective 
morphism onto an algebraic variety $S$. 
Let $D$ be a $\pi$-nef $\mathbb R$-Cartier $\mathbb R$-divisor on $X$. 
Then $D$ is {\em{nef and log big over $S$ with respect to $(X, \Delta)$}} 
if 
$D|_W$ is big 
over $S$ for every slc stratum of $(X, \Delta)$. 
\end{defn}

Finally, let us recall the definition of {\em{simple normal crossing pairs}}. 
In \cite{kollar-book} and \cite{bierstone-p}, a simple normal crossing pair is called 
a {\em{semi-snc pair}}. 

\begin{defn}[Simple normal crossing pairs]\label{def-snc} 
We say that the pair $(X, D)$ is {\em{simple normal crossing}} at 
a point $a\in X$ if $X$ has a Zariski open neighborhood $U$ of $a$ that can be embedded in a smooth 
variety 
$Y$, 
where $Y$ has regular system of parameters $(x_1, \cdots, x_p, y_1, \cdots, y_r)$ at 
$a=0$ in which $U$ is defined by a monomial equation 
$$
x_1\cdots x_p=0
$$ 
and $$
D=\sum _{i=1}^r \alpha_i(y_i=0)|_U, \quad  \alpha_i\in \mathbb R. 
$$ 
We say that $(X, D)$ is a {\em{simple normal crossing pair}} if it is simple normal crossing at every point of $X$. 
We say that a simple normal crossing pair $(X, D)$ is {\em{embedded}} 
if there exists a closed embedding $\iota:X\to M$, where 
$M$ is a smooth variety of $\dim X+1$. 
If $(X, 0)$ is a simple normal crossing pair, then $X$ is called a {\em{simple normal crossing 
variety}}. If $X$ is a simple normal crossing variety, then $X$ has only Gorenstein singularities. 
Thus, it has an invertible dualizing sheaf $\omega_X$. 
Therefore, we can define the {\em{canonical divisor $K_X$}} such that 
$\omega_X\simeq \mathcal O_X(K_X)$. 
It is a Cartier divisor on $X$ and is well-defined up to linear equivalence. 

Let $X$ be a simple normal crossing variety and let $X=\bigcup _{i\in I}X_i$ be the 
irreducible decomposition of $X$. 
A {\em{stratum}} of $X$ is an irreducible component of $X_{i_1}\cap \cdots \cap X_{i_k}$ for some 
$\{i_1, \cdots, i_k\}\subset I$. 

Let $X$ be a simple normal crossing variety and 
let $D$ be a Cartier divisor on $X$. 
If $(X, D)$ is a simple normal crossing pair and $D$ is reduced, 
then $D$ is called a {\em{simple normal crossing divisor}} on $X$. 

Let $(X, D)$ be a simple normal crossing pair such that 
$D$ is a subboundary $\mathbb R$-divisor 
on $X$. 
Let $\nu:X^\nu \to X$ be the normalization. 
We define $\Xi$ by the formula 
$$
K_{X^\nu}+\Xi=\nu^*(K_X+D). 
$$ 
Then a {\em{stratum}} of $(X, D)$ is an irreducible component of $X$ or the $\nu$-image 
of a log canonical center of $(X^\nu, \Xi)$. 
We note that $(X^\nu, \Xi)$ is sub log canonical.  
When $D=0$, 
this definition is compatible with the above definition of the strata of $X$. 
When $D$ is a boundary $\mathbb R$-divisor, 
$W$ is a stratum of $(X, D)$ if and only if 
$W$ is an slc stratum of $(X, D)$. Note that 
$(X, D)$ is semi log canonical if $D$ is a boundary $\mathbb R$-divisor. 
\end{defn}

The author learned the following interesting example from Kento Fujita (cf.~\cite[Remark 1.9]{kollar-book}). 

\begin{ex}\label{rem-fujita} 
Let $X_1=\mathbb P^2$ and let $C_1$ be a line on $X_1$. 
Let $X_2=\mathbb P^2$ and let 
$C_2$ be a smooth conic on $X_2$. 
We fix an isomorphism $\tau:C_1\to C_2$. By gluing 
$X_1$ and $X_2$ along $\tau:C_1\to C_2$, we obtain a simple normal crossing surface 
$X$ such that $-K_X$ is ample (cf.~\cite{fujita}). 
We can check that $X$ can not be embedded into any smooth varieties as a simple normal crossing 
divisor. 
\end{ex}

The reader can find various vanishing theorems and 
a generalization of the Fujita--Kawamata semi-positivity theorem for 
simple normal crossing pairs in \cite{book}, \cite{fujino-vanishing}, \cite{fujino-inj}, 
and \cite{fuji-fuji} 
(see also \cite{ffs}). 
All of them depend on the theory of the mixed Hodge structures on cohomology groups 
with compact support.  

\section{Supplements to the theory of quasi-log varieties}\label{sec-app-qlog}
In this section, let us give supplementary arguments to the theory of quasi-log varieties (cf.~\cite{ambro}). 
For the details of the theory of quasi-log varieties, see \cite[Chapter 3]{book} and \cite{fujino-qlog}. 

Let us introduce the notion of {\em{globally embedded simple 
normal crossing pairs}}, which is much easier than 
the notion of embedded simple normal crossing pairs from some technical 
viewpoints. 
It is obvious that 
a globally embedded simple normal crossing pair is an 
embedded simple normal crossing pair. 

\begin{defn}[Globally embedded simple normal crossing 
pairs]\label{gsnc0} 
Let $Y$ be a simple normal crossing divisor 
on a smooth 
variety $M$ and let $B$ be an $\mathbb R$-divisor 
on $M$ such that 
$\Supp (B+Y)$ is a simple normal crossing divisor and that 
$B$ and $Y$ have no common irreducible components. 
We set $\Delta_Y=B|_Y$ and consider the pair $(Y, \Delta_Y)$. 
We call $(Y, \Delta_Y)$ a {\em{globally embedded simple normal 
crossing pair}}. 
\end{defn}

Let us recall the definition of {\em{quasi-log varieties with only qlc singularities}}. 

\begin{defn}[Quasi-log varieties with only qlc singularities] \label{def51}
A {\em{quasi-log variety with only 
qlc singularities}} is a (not necessarily equidimensional) 
variety $X$ with an $\mathbb R$-Cartier $\mathbb R$-divisor 
$\omega$, and a finite collection $\{C\}$ of reduced and irreducible subvarieties of $X$ such that 
there is a proper morphism $f:(Y, \Delta_Y)\to X$ from a globally embedded simple normal crossing 
pair satisfying the following properties. 
\begin{itemize}
\item[(1)] $f^*\omega\sim _{\mathbb R}K_Y+\Delta_Y$ such that 
$\Delta_Y$ is a subboundary $\mathbb R$-divisor. 
\item[(2)] There is an isomorphism 
$$
\mathcal O_X\simeq f_*\mathcal O_Y(\lceil -\Delta_Y^{<1}\rceil). 
$$ 
\item[(3)] The collection of subvarieties $\{C\}$ coincides with 
the image of the $(Y, \Delta_Y)$-strata.  
\end{itemize}
We simply write $[X, \omega]$ to denote the above data 
$$
\bigl(X, \omega, f:(Y, \Delta_Y)\to X\bigr)
$$ 
if there is no risk of confusion.  
The subvarieties $C$ are called the {\em{qlc centers}} of $[X, \omega]$, and 
$f:(Y, \Delta_Y)\to X$ is called a {\em{quasi-log resolution}} of 
$[X, \omega]$. 
We sometimes simply say that 
$[X, \omega]$ is a {\em{qlc pair}}, or 
the pair $[X, \omega]$ is {\em{qlc}}. 
We call $\omega$ the {\em{quasi-log canonical class}} of 
$[X, \omega]$. Note that 
$\omega$ is defined up to $\mathbb R$-linear 
equivalence. 
\end{defn}

The notion of {\em{crepant log structures}} 
introduced by Koll\'ar--Kov\'acs, which is a very special but 
important case of quasi-log structures, is also useful for various applications (see, 
for example, \cite[4.4 Crepant log structures]{kollar-book}). 
For a prototype of quasi-log structures and crepant log structures, 
see \cite[Theorem 4.1]{fujino-app}. 

Let us recall the following very useful lemma. 
By this lemma, it is sufficient to treat globally embedded simple normal 
crossing pairs for the theory of qlc pairs. 

\begin{lem}[{cf.~\cite[Proposition 3.57]{book}}]\label{useful-lem}
Let $(Y,\Delta_Y)$ be an embedded simple normal crossing pair such that 
$\Delta_Y$ is a subboundary $\mathbb R$-Cartier $\mathbb R$-divisor on $Y$. 
Let $M$ be the ambient space of $Y$. 
Then we can construct a sequence of blow-ups 
$$
M_k\overset{\sigma_k}\longrightarrow M_{k-1}\overset{\sigma_{k-1}}
\longrightarrow \cdots \overset{\sigma_0}\longrightarrow M_0=M
$$
with the following properties. 
\begin{itemize}
\item[(1)] $\sigma_{i+1}:M_{i+1}\to M_i$ is the blow-up along a smooth irreducible component of 
$\Supp \Delta_{Y_i}$ for every $i$. 
\item[(2)] We set $Y_0=Y$ and $\Delta_{Y_0}=\Delta_Y$. 
We define $Y_{i+1}$ as the strict transform of $Y_i$ on $M_{i+1}$ for every $i$. 
Note that $Y_i$ is a simple normal crossing divisor on $M_i$ for every $i$. 
\item[(3)] We define $\Delta_{Y_{i+1}}$ by 
$$
K_{Y_{i+1}}+\Delta_{Y_{i+1}}=\sigma_{i+1}^*(K_{Y_i}+\Delta_{Y_i})
$$ 
for every $i$. 
\item[(4)] There exists an $\mathbb R$-divisor $B$ on $M_k$ such that 
$\Supp (B+Y_k)$ is a simple normal crossing divisor on $M_k$, 
$B$ and $Y_k$ have no common irreducible components, 
and $B|_{Y_k}=\Delta_{Y_k}$. 
\item[(5)] $\sigma_*\mathcal O_{Y_k}(\lceil -\Delta_{Y_k}^{<1}\rceil)\simeq \mathcal O_Y
(\lceil -\Delta_Y^{<1}\rceil)$ 
where $\sigma=\sigma_1\circ \cdots \circ \sigma_k:M_k\to M$. 
\end{itemize}
\end{lem}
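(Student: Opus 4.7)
The plan is to prove this lemma by induction on a complexity measure counting the irreducible components of $\Supp \Delta_Y$ that fail to extend to the ambient space $M$ compatibly with the SNC structure. At each step I would identify such a ``bad'' component $C$, which is automatically smooth as a subvariety of $M$ since $(Y, \Delta_Y)$ is an embedded simple normal crossing pair, and blow $M$ up along it; this produces the required $\sigma_{i+1}$.

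First, I would set up the obstruction carefully. The embedded pair $(Y, \Delta_Y)$ is globally embedded precisely when every irreducible component of $\Supp \Delta_Y$ is cut out on $Y$ by a Cartier divisor on $M$ whose support, together with $Y$, remains SNC on $M$. In the local coordinate description of Definition \ref{def-snc}, each component of $\Delta_Y$ is locally $(y_i=0)|_Y$ and has smooth closure in $M$; the obstruction to global extension is thus a matter of gluing these local choices into a single Cartier divisor on $M$, the failure of which is what the blow-up sequence is designed to remove.

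Next I would carry out the blow-up. Let $\sigma_{i+1}: M_{i+1} \to M_i$ be the blow-up of $M_i$ along a bad smooth component $C$ of $\Supp \Delta_{Y_i}$, with exceptional divisor $E$. Since $C$ has SNC with $Y_i$, a standard computation shows that $Y_{i+1}$ (the strict transform of $Y_i$) is again a simple normal crossing divisor on $M_{i+1}$, and $E$ has SNC with $Y_{i+1}$ together with the strict transforms of the remaining components of $\Delta_{Y_i}$. Then $\Delta_{Y_{i+1}}$ is defined by the pullback formula in (3), and the exceptional contribution to $\Delta_{Y_{i+1}}$ is a component that, by construction, extends to $M_{i+1}$ as part of $E$; thus the ``bad count'' strictly decreases. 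Iterating reaches a stage $k$ where every component of $\Supp \Delta_{Y_k}$ extends, producing the desired $B$ on $M_k$ with $B|_{Y_k} = \Delta_{Y_k}$.

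The main obstacle will be designing a rigorous termination invariant, since a single blow-up can create new components of $\Delta_{Y_{i+1}}$ and interact nontrivially with old ones. I expect this is handled by a lexicographic invariant that weights components by their codimension in $M_i$ so that bad components of largest codimension are resolved first, guaranteeing termination after finitely many steps. Property (5) then follows by iterating a local computation at each individual blow-up: since $\sigma_{i+1}$ is a blow-up along a smooth center contained in $\Supp \Delta_{Y_i}$, the exceptional coefficient in $\Delta_{Y_{i+1}}^{<1}$ is strictly less than one, the round-up $\lceil - \cdot \rceil$ interacts well with $(\sigma_{i+1})_*$, and the projection formula yields $(\sigma_{i+1})_* \mathcal O_{Y_{i+1}}(\lceil -\Delta_{Y_{i+1}}^{<1}\rceil) \simeq \mathcal O_{Y_i}(\lceil -\Delta_{Y_i}^{<1}\rceil)$, which gives (5) by composition over the whole sequence.
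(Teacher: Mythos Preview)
Your overall plan for constructing the blow-up sequence and establishing properties (1)--(4) is reasonable and in the spirit of the paper, which treats these as ``obvious by the construction'' and defers to \cite[Proposition 3.57]{book}. The paper, however, singles out property (5) as the only point that genuinely needs proof, and this is precisely where your sketch has a gap.

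You assert that ``the exceptional coefficient in $\Delta_{Y_{i+1}}^{<1}$ is strictly less than one'' and that the projection formula then gives the pushforward isomorphism. The first statement is either tautological (coefficients of $\Delta^{<1}$ are by definition $<1$) or, if you mean that every exceptional prime appears with coefficient $<1$ in $\Delta_{Y_{i+1}}$, it is false: the exceptional divisor can have coefficient exactly $1$ and hence lie in $\Delta_{Y_{i+1}}^{=1}$ rather than $\Delta_{Y_{i+1}}^{<1}$. More importantly, neither version yields the required identity
\[
(\sigma_{i+1})_*\mathcal O_{Y_{i+1}}(\lceil -\Delta_{Y_{i+1}}^{<1}\rceil)\simeq \mathcal O_{Y_i}(\lceil -\Delta_{Y_i}^{<1}\rceil).
\]
What must actually be shown is that
\[
\lceil -\Delta_{Y_{i+1}}^{<1}\rceil = \sigma_{i+1}^*\lceil -\Delta_{Y_i}^{<1}\rceil + E
\]
with $E$ effective and $\sigma_{i+1}$-exceptional; equivalently, that $\sigma_{i+1}^*\lfloor \Delta_{Y_i}^{<1}\rfloor - \lfloor \Delta_{Y_{i+1}}^{<1}\rfloor$ is effective and exceptional. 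The paper obtains this by rewriting the log pullback so that the coefficient of an exceptional prime $\nu$ in this difference equals $a(\nu, Y_i, \{\Delta_{Y_i}\}+\Delta_{Y_i}^{=1})$ plus a term in $[0,1]$, and then invoking the key implication
\[
a(\nu, Y_i, \{\Delta_{Y_i}\}+\Delta_{Y_i}^{=1})=-1 \ \Longrightarrow\ a(\nu, Y_i, \Delta_{Y_i})=-1.
\]
Combined with the fact that the difference is an integer, this forces it to be $\geq 0$. Your sketch does not supply this discrepancy comparison, and the projection formula alone cannot bridge the gap without first knowing that $E\geq 0$.
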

\begin{proof}
All we have to do is to check the property (5). 
The other properties are obvious by the construction of blow-ups. 
By 
$$
K_{Y_{i+1}}+\Delta_{Y_{i+1}}=\sigma_{i+1}^*(K_{Y_i}+\Delta_{Y_i}), 
$$ 
we have 
\begin{align*}
K_{Y_{i+1}}=&\sigma_{i+1}^*(K_{Y_i}+\{\Delta_{Y_i}\}+\Delta_{Y_i}^{=1})
\\ &+\sigma_{i+1}^*\lfloor \Delta_{Y_{i}}^{<1}\rfloor-\lfloor 
\Delta_{Y_{i+1}}^{<1}\rfloor 
-\Delta_{Y_{i+1}}^{=1}-\{\Delta_{Y_{i+1}}\}. 
\end{align*}
We can easily check that 
$\sigma_{i+1}^*\lfloor \Delta_{Y_i}^{<1}\rfloor -\lfloor 
\Delta_{Y_{i+1}}^{<1}\rfloor$ is an effective 
$\sigma_{i+1}$-exceptional 
Cartier divisor on $Y_{i+1}$. 
This is because 
$a(\nu, Y_i, \{\Delta_{Y_i}\}+\Delta_{Y_i}^{=1})=-1$ for a prime divisor 
$\nu$ over $Y_{i}$ implies 
$a(\nu, Y_i, \Delta_{Y_i})=-1$ (cf.~\cite[Definition 2.4]{kollar-book}). 
Thus, we can write 
$$
\sigma_{i+1}^*\lceil -\Delta_{Y_i}^{<1}\rceil +E=\lceil 
-\Delta_{Y_{i+1}}^{<1}\rceil
$$ 
where $E$ is an effective $\sigma_{i+1}$-exceptional 
Cartier divisor on $Y_{i+1}$. 
This implies that 
$\sigma_{i+1*}\mathcal O_{Y_{i+1}}(\lceil 
-\Delta_{Y_{i+1}}^{<1}\rceil)\simeq 
\mathcal O_{Y_i}(\lceil -\Delta_{Y_i}^{<1}\rceil)$ for every $i$. 
Thus, $\sigma _*\mathcal O_{Y_k}(\lceil 
-\Delta_{Y_k}^{<1}\rceil)\simeq \mathcal O_Y(\lceil -\Delta_Y^{<1}\rceil)$. 
\end{proof}

Although we do not need the following theorem explicitly in this paper, 
it is very important and useful. 
It helps the reader to understand quasi-log structures. 

\begin{thm}[{cf.~\cite[Proposition 4.8]{ambro}, \cite[Theorem 3.45]{book}}]\label{center}
Let $[X, \omega]$ be a qlc pair. 
Then we have the following properties. 
\begin{itemize}
\item[(i)] The intersection of two qlc centers is a union of qlc centers. 
\item[(ii)] For any point $P\in X$, the set of all 
qlc centers passing through 
$P$ has a unique minimal element $W$. Moreover, $W$ is normal 
at $P$. 
\end{itemize}
\end{thm}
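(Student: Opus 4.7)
The strategy is to pass to a quasi-log resolution $f:(Y,\Delta_Y)\to X$ realizing the qlc structure on $[X,\omega]$, translate everything into the simple normal crossing world, and exploit the vanishing theorem for qlc pairs applied to ideal sheaves of unions of qlc centers, together with the adjunction property that any union of qlc centers inherits a natural qlc structure.

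For (i), let $C_1,C_2$ be qlc centers and let $Y_i\subseteq Y$ denote the union of all strata $T$ of $(Y,\Delta_Y)$ with $f(T)\subseteq C_i$. Each $Y_i$ is a union of strata of $(Y,\Delta_Y)$, $f(Y_i)=C_i$, and since irreducible components of intersections of strata in a simple normal crossing pair are themselves strata, $Y_1\cap Y_2$ is again a union of strata. Hence $f(Y_1\cap Y_2)$ is automatically a union of qlc centers contained in $C_1\cap C_2$. For the reverse inclusion I would apply $f_*$ to the Mayer--Vietoris sequence
\[
0\to \mathcal O_{Y_1\cup Y_2}(N)\to \mathcal O_{Y_1}(N)\oplus \mathcal O_{Y_2}(N)\to \mathcal O_{Y_1\cap Y_2}(N)\to 0,
\]
where $N:=\lceil -\Delta_Y^{<1}\rceil$. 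By adjunction the pushforwards of the three terms are $\mathcal O_{C_1\cup C_2}$, $\mathcal O_{C_1}\oplus \mathcal O_{C_2}$, and $\mathcal O_{f(Y_1\cap Y_2)}$, and the vanishing theorem (applied locally on $X$ after a suitable ample twist) kills the obstructing $R^1f_*$. Comparing the resulting short exact sequence on $X$ with the standard Mayer--Vietoris sequence yields $\mathcal O_{f(Y_1\cap Y_2)}\simeq \mathcal O_{C_1\cap C_2}$, giving $f(Y_1\cap Y_2)=C_1\cap C_2$ as sets.

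The first half of (ii) is then formal: by Noetherianity any nonempty family of qlc centers through $P$ admits a minimal element, and by (i) two such minimal elements would intersect in a union of qlc centers through $P$, forcing them to coincide. So there is a unique minimal qlc center $W$ through $P$. For normality of $W$ at $P$, note that by adjunction the pair $[W,\omega|_W]$ is qlc, and by minimality of $W$ its only qlc center is $W$ itself. It is a general fact about qlc pairs whose only qlc center is the whole underlying variety that the variety is normal; this follows by applying the vanishing theorem to the normalization exact sequence $0\to \mathcal O_W\to \nu_*\mathcal O_{W^\nu}\to \mathcal Q\to 0$ and arguing that $\Supp\mathcal Q$ would otherwise produce a qlc center strictly smaller than $W$, contradicting minimality. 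This yields normality at $P$.

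The main obstacle I anticipate is the reverse inclusion in (i): one must set up the Mayer--Vietoris argument so that the adjunction isomorphisms for the unions of qlc centers $C_1$, $C_2$, $C_1\cup C_2$, and $f(Y_1\cap Y_2)$ match compatibly, and one must invoke a sufficiently strong form of the vanishing theorem for qlc pairs---ultimately a Hodge-theoretic input derived from Theorem \ref{main2}---to ensure the relevant $R^1f_*$ vanishes. The same vanishing ingredient also underpins the normality part of (ii).
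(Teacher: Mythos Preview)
Your argument for (i) is broadly along the right lines, and the uniqueness half of (ii) is fine. The genuine gap is in your normality argument.

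You propose to deduce normality of the minimal qlc center $W$ from the exact sequence $0\to\mathcal O_W\to\nu_*\mathcal O_{W^\nu}\to\mathcal Q\to 0$ by ``applying the vanishing theorem'' and asserting that $\Supp\mathcal Q$ would otherwise be a qlc center strictly contained in $W$. Neither step is justified. The vanishing theorems available to you concern higher direct images of sheaves coming from the quasi-log resolution, not the cokernel of $\mathcal O_W\hookrightarrow\nu_*\mathcal O_{W^\nu}$; there is no mechanism by which they force $\mathcal Q=0$. And qlc centers are, by definition, images of strata of the simple normal crossing pair $(Y,\Delta_Y)$; there is no reason the non-normal locus of $W$ should arise this way, so the ``contradiction with minimality'' does not go through.

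The paper's approach is different and is exactly the content of Lemma~\ref{lem-key}. By adjunction one has a quasi-log resolution $g:(Z,\Delta_Z)\to W$ with $g_*\mathcal O_Z\simeq\mathcal O_W$, and minimality of $W$ forces every stratum of the simple normal crossing variety $Z$ to dominate $W$. One then resolves the rational map $\nu^{-1}\circ g:Z\dashrightarrow W^\nu$ by a birational modification $\alpha:Z'\to Z$ preserving the simple normal crossing structure (this uses the resolution results of Bierstone--Milman), obtaining a factorization $Z'\to W^\nu\to W$. Since $\alpha_*\mathcal O_{Z'}\simeq\mathcal O_Z$, one gets $\nu_*\mathcal O_{W^\nu}\subset g_*\alpha_*\mathcal O_{Z'}\simeq\mathcal O_W$, hence $\mathcal O_W\simeq\nu_*\mathcal O_{W^\nu}$ and $W$ is normal. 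This geometric factorization through the normalization is the missing idea; the normalization sequence alone, combined with qlc vanishing, does not suffice.

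A smaller remark on (i): the phrase ``after a suitable ample twist'' is misleading. The $R^1f_*$ you need to kill is that of $\mathcal I_{Y_1\cup Y_2}\otimes\mathcal O_Y(N)$ (or equivalently the connecting map out of $f_*\mathcal O_{Y_1\cap Y_2}(N)$), and this vanishes by the torsion-free/vanishing theorem for globally embedded simple normal crossing pairs applied directly over $X$, with no ample twist required.
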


By Theorem \ref{main2} (5) and Theorem \ref{center}, 
we have an obvious corollary. 

\begin{cor}\label{cor35ne} 
Let $(X, \Delta)$ be a quasi-projective 
semi log canonical pair and let $W$ be a minimal slc stratum of the pair 
$(X, \Delta)$. 
Then $W$ is normal. 
\end{cor}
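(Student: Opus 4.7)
The plan is to combine Theorem \ref{main2} with part (ii) of Theorem \ref{center}; the corollary is essentially an immediate consequence, so there is no deep obstacle, only a bookkeeping point to verify. By Theorem \ref{main2}, the pair $[X, K_X+\Delta]$ carries a quasi-log structure with only qlc singularities whose collection of qlc centers coincides precisely with the set of slc strata of $(X, \Delta)$. In particular, a minimal slc stratum $W$ is also a minimal qlc center of $[X, K_X+\Delta]$.

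Next I would pick an arbitrary point $P\in W$. By Theorem \ref{center} (ii), among all qlc centers of $[X, K_X+\Delta]$ passing through $P$, there is a unique minimal one $W_P$, and $W_P$ is normal at $P$. Since $W$ is itself a qlc center through $P$, the uniqueness/minimality forces $W_P\subseteq W$. The one point to check carefully is that this containment is actually an equality: by Theorem \ref{main2} (5), $W_P$ is also an slc stratum of $(X,\Delta)$, and since $W$ is minimal among such strata, we must have $W_P=W$. Therefore $W$ is normal at $P$. As $P\in W$ was arbitrary, $W$ is normal, completing the proof.
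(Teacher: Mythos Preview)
Your argument is correct and is exactly the approach the paper indicates: the corollary is stated immediately after Theorem \ref{center} with the remark that it follows from Theorem \ref{main2} (5) and Theorem \ref{center}, and your write-up just spells out the routine minimality bookkeeping. There is nothing to add.
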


The following result is a key lemma for the proof of Theorem \ref{center} (ii). 
We contain it for the reader's convenience. 

\begin{lem}\label{lem-key} 
Let $f:X\to Y$ be a proper surjective 
morphism 
from a simple normal crossing variety $X$ to an irreducible 
variety $Y$. 
Assume that every stratum of $X$ is dominant onto $Y$ and that 
$f_*\mathcal O_X\simeq \mathcal O_Y$. Then 
$Y$ is normal. 
\end{lem}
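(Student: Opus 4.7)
The plan is to factor $f$ through the normalization of $Y$ and then deduce that this normalization is an isomorphism by comparing pushforwards of structure sheaves. First I would verify that $Y$ is integral: it is irreducible by assumption, and since $X$ is reduced (being a simple normal crossing variety), the inclusion $\mathcal O_Y\hookrightarrow f_*\mathcal O_X$ forces $\mathcal O_Y$ to be reduced. Let $\nu:Y^\nu\to Y$ denote the normalization, which is a finite birational morphism of integral schemes.

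The key step is to construct a morphism $\tilde g:X\to Y^\nu$ with $\nu\circ\tilde g=f$. Let $X=\bigcup_{i\in I}X_i$ be the irreducible decomposition. Each $X_i$ is smooth (hence normal), so by the universal property of the normalization, $f|_{X_i}$ factors uniquely as $f|_{X_i}=\nu\circ g_i$ for some $g_i:X_i\to Y^\nu$. To glue these $g_i$, I would look at any irreducible component $W$ of $X_i\cap X_j$: such a $W$ is a stratum of $X$ (by definition), and every stratum of a simple normal crossing variety is smooth, hence normal. Thus $f|_W$ also lifts uniquely through $\nu$, forcing $g_i|_W=g_j|_W$. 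Since the structure sheaf of $X$ is the fibre product of the sheaves on its components over their pairwise (and higher) intersections, the compatible family $\{g_i\}$ glues to a morphism $\tilde g:X\to Y^\nu$ satisfying $\nu\circ\tilde g=f$. (I would be mildly careful here about higher overlaps in the snc variety, but the local coordinate description of an snc variety reduces this to the same uniqueness-of-lift argument for each stratum.)

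Given the factorization, I would conclude as follows. Applying $\nu_*$ to the inclusion $\mathcal O_{Y^\nu}\subset \tilde g_*\mathcal O_X$ yields
\[
\mathcal O_Y\;\subset\;\nu_*\mathcal O_{Y^\nu}\;\subset\;\nu_*\tilde g_*\mathcal O_X\;=\;f_*\mathcal O_X\;\simeq\;\mathcal O_Y,
\]
so $\nu_*\mathcal O_{Y^\nu}=\mathcal O_Y$. Since $\nu$ is a finite birational morphism between integral schemes, this forces $\nu$ to be an isomorphism, hence $Y$ is normal.

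The main obstacle I anticipate is the gluing step: one must ensure that the local morphisms $g_i$ actually descend to a scheme morphism from $X$ rather than merely a set-theoretic map. The hypothesis that every stratum dominates $Y$ is invoked only indirectly—it guarantees that each stratum $W$ is not collapsed to a point by $f$, so that the factorizations through $\nu$ are genuinely nontrivial and the uniqueness argument has content. The crucial structural input is that every stratum of a simple normal crossing variety is smooth, which is what allows the uniform use of the universal property of normalization at every level of intersection.
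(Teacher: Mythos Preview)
Your approach is correct and reaches the same endpoint as the paper, but by a different route. The paper does not attempt to lift $f$ through $\nu$ directly; instead it applies a resolution theorem for reducible varieties (\cite[Theorem 1.5]{bierstone-milman}) to the closure of the graph of the rational map $\nu^{-1}\circ f:X\dashrightarrow Y^\nu$, producing an auxiliary simple normal crossing variety $Z$ equipped with honest morphisms $\alpha:Z\to X$ and $\beta:Z\to Y^\nu$, and then runs the sheaf comparison $\mathcal O_Y\simeq f_*\alpha_*\mathcal O_Z=\nu_*\beta_*\mathcal O_Z\supset\nu_*\mathcal O_{Y^\nu}$. Your argument is more elementary in that it avoids invoking a resolution theorem for reducible varieties, at the cost of relying on the descent property for simple normal crossing varieties (the exactness of $0\to\mathcal O_X\to\bigoplus_i\mathcal O_{X_i}\to\bigoplus_{i<j}\mathcal O_{X_i\cap X_j}$), which is standard.

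One point needs correcting. You describe the hypothesis that every stratum dominates $Y$ as being used ``only indirectly'', but in fact it is the essential input for your lifting step. The universal property of the normalization $\nu:Y^\nu\to Y$ says that a morphism $W\to Y$ from a normal integral $W$ factors (uniquely) through $\nu$ \emph{provided it is dominant}; without dominance the lift need neither exist nor be unique (picture a smooth curve mapping through a node of $Y$ along one analytic branch, and a second curve along the other). So you should invoke the dominance of each $X_i$ to obtain $g_i$, and the dominance of each stratum $W\subset X_i\cap X_j$ to force $g_i|_W=g_j|_W$. With that clarification the argument goes through.
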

\begin{proof}
Let $\nu:Y^{\nu}\to Y$ be the normalization. 
By applying \cite[Theorem 1.5]{bierstone-milman} to the graph of the rational map 
$\nu^{-1}\circ f:X\dashrightarrow Y^{\nu}$, we obtain the following 
commutative diagram: 
$$
\xymatrix{Z\ar[d]_\beta \ar[r]^\alpha &X\ar[d]^f\\ 
Y^{\nu}\ar[r]_{\nu}&Y
}
$$ 
such that 
\begin{itemize}
\item[(i)] $Z$ is a simple normal crossing variety, and 
\item[(ii)] 
there is a Zariski open set $U$ (resp.~$V$) of $Z$ (resp.~$X$) such that 
$U$ (resp.~$V$) contains the generic point of any stratum of $Z$ (resp.~$X$) 
and that $\alpha|_U:U\to V$ is an isomorphism. 
\end{itemize} 
Then it is easy to see that $\alpha_*\mathcal O_Z\simeq \mathcal O_X$. 
Therefore, 
$$
\mathcal O_Y\simeq f_*\mathcal O_X\simeq f_*\alpha_*\mathcal O_Z\simeq 
\nu_*\beta_*\mathcal O_Z\supset \nu_*\mathcal O_{Y^\nu}. 
$$ 
This implies that $\mathcal O_Y\simeq \nu_*\mathcal O_{Y^\nu}$. 
So, we obtain that $Y$ is normal. 
\end{proof}

We recommend the reader to see \cite{fujino-qlog} for the basic properties of qlc pairs. 
Note that adjunction and vanishing theorem (see, for example, \cite[Theorem 3.6]{fujino-qlog}) 
for qlc pairs is one of the most important properties of qlc pairs. 

\section{Proof of the main theorem}\label{sec3}

Let us start the proof of the main theorem:~Theorem \ref{main2}. 

\begin{proof}[Proof of {\em{Theorem \ref{main2}}}]
We divide the proof into several steps. We repeatedly use \cite{bierstone-milman}, \cite{bierstone-p}, and 
\cite[10.4.~Semi-log-resolution]{kollar-book}. 
We prove Theorem \ref{main2} simultaneously with 
Remark \ref{rem13}. 

\begin{step}\label{step1}
Let $X^{\ncp}$ denote the open subset 
of $X$ consisting of smooth points, double normal crossing points 
and pinch points. 
Then, by \cite[Theorem 1.18]{bierstone-milman}, 
there exists a morphism $f_1:X_1\to X$ which is a finite composite of admissible 
blow-ups, such that 
\begin{itemize}
\item[(i)] $X_1=X_1^{\ncp}$, 
\item[(ii)] $f_1$ is an isomorphism over $X^{\ncp}$, and 
\item[(iii)] $\Sing X_1$ maps birationally onto the closure of $\Sing X^{\ncp}$. 
\end{itemize}  
Since $X$ satisfies Serre's $S_2$ condition and $\codim _{X} (X\setminus X^\ncp)\geq 2$, 
we can easily check that $f_{1*}\mathcal O_{X_1}\simeq \mathcal O_X$. 
\end{step}

\begin{rem}[{see \cite[Corollary 10.55]{kollar-book}}]\label{rem41}
In Step \ref{step1}, we assume that the irreducible components of $X$ have no self-intersection in codimension 
one. Let $X^\sn$ be the open subset of $X$ which has only smooth points and simple normal crossing 
points of multiplicity $\leq 2$. 
Then there is a projective birational morphism $f_1:X_1\to X$ such that 
\begin{itemize}
\item[(i)] $X_1=X_1^\sn$, 
\item[(ii)] $f_1$ is an isomorphism over $X^\sn$, and 
\item[(iii)] $\Sing X_1$ maps birationally onto the closure of $\Sing X^\sn$. 
\end{itemize}
\end{rem}

\begin{step}[{cf.~\cite[Proposition 10.59]{kollar-book}}]\label{step2}
By the construction in Step \ref{step1}, 
$X_1$ is quasi-projective. 
Therefore, we can embed $X_1$ into $\mathbb P^N$. 
We pick a finite set $W\subset X_1$ such that 
each irreducible component of $\Sing X_1$ contains a point of $W$. 
We take a sufficiently large positive integer $d$ such that 
the scheme theoretic 
base locus of $|\mathcal O_{\mathbb P^N}(d)\otimes 
\mathcal I_{{\overline X}_1}|$ is $X_1$ near every point of $W$, 
where $\overline X _1$ is the closure of $X_1$ in $\mathbb P^N$ and $\mathcal I_{\overline X _1}$ is the defining 
ideal sheaf of $\overline X_1$ in $\mathbb P^N$. 
By taking a complete intersection of 
$(N-\dim X_1-1)$ general members of $|\mathcal O_{\mathbb P^N}(d)
\otimes \mathcal I_{\overline X _1}|$, we obtain $Y\supset X_1$ such that $Y$ is smooth at every point of $W$. 
Note that we used the fact that $X_1$ has only hypersurface singularities near $W$.
By replacing $Y$ with $Y\setminus (\overline X_1\setminus X_1)$, 
we may assume that $X_1$ is closed in $Y$.  
\end{step}
\begin{step}\label{step3}
Let $g:Y_2\to Y$ be a resolution, which is a finite composite 
of admissible blow-ups. 
Let $X_2$ be the strict transform of $X_1$ on $Y_2$. 
Note that $f_2=g|_{X_2}:X_2\to X_1$ is an isomorphism over 
the generic point of any irreducible component of $\Sing X_1$ because $Y$ is smooth at 
every point of $W$. 
\end{step}
\begin{step}\label{step4}
Apply \cite[Theorem 1.18]{bierstone-milman} to $X_2\subset Y_2$ 
(see also Proof of Theorem 1.18 in \cite{bierstone-milman}). 
We obtain a projective birational morphism 
$g_3:Y_3\to Y_2$, which is a finite 
composite of admissible blow-ups, 
from a smooth variety $Y_3$ 
with the following properties (i), (ii), and (iii). 
Note that $X_3$ is the strict transform of $X_2$ on $Y_3$ and 
$f_3=g_3|_{Y_3}:X_3\to X_2$. 
\begin{itemize}
\item[(i)] $X_3=X_3^\ncp$, 
\item[(ii)] $f_3$ is an isomorphism over $X_2^\ncp$, and 
\item[(iii)] $\Sing X_3$ maps birationally onto the closure of $\Sing X_2^\ncp$. 
\end{itemize} 
Let $E$ be an irreducible component of $\Sing X_3$. 
If $E\to (f_2\circ f_3)(E)$ is not birational, then we take a blow-up 
of $Y_3$ along $E$ and replace 
$X_3$ with its strict transform. 
After finitely many blow-ups, we may assume that $X_3$ satisfies (i) and 
\begin{itemize}
\item[(iv)] $\Sing X_3$ maps birationally onto $\Sing X_1$ by $f_2\circ f_3$. 
\end{itemize}
From now on, we do not require the properties (ii) and (iii) above. 
By the above constructions, we can easily check that 
$(f_2\circ f_3)_*\mathcal O_{X_3}\simeq\mathcal O_{X_1}$ since 
$X_1$ satisfies Serre's $S_2$ condition. 
\end{step}
\begin{rem}\label{rem4242}
When $X_1$ is a simple normal crossing variety, 
we apply Szab\'o's resolution lemma to the pair 
$(Y_2, X_2)$ in Step \ref{step4}. 
Then we have the following properties. 
\begin{itemize}
\item[(i)] $X_3=X_3^\snc$, and 
\item[(ii)] $f_3$ is an isomorphism over $X_2^\snc$. 
\end{itemize}
By taking more blow-ups if necessary, we may assume (i) and 
\begin{itemize}
\item[(iv)] $\Sing X_3$ maps birationally onto $\Sing X_1$ by $f_2\circ f_3$. 
\end{itemize}
\end{rem}
\begin{step}\label{step5}
We set 
$$
K_{X_1}+\Delta_1=f_1^*(K_X+\Delta) 
$$ 
and 
$$
K_{X_3}+\Delta_3=(f_1\circ f_2\circ f_3)^*(K_X+\Delta). 
$$ 
Note that $X_1$ and $X_3$ have only Gorenstein singularities. 
Therefore, 
$\Delta_1$ and $\Delta_3$ are $\mathbb R$-Cartier $\mathbb R$-divisors. 
We also note that 
the support of $\Delta_1$ (resp.~$\Delta_3$) 
does not contain any irreducible components of the conductor of $X_1$ (resp.~$X_3$). 
Let $\nu_3:X_3^\nu\to X_3$ be the normalization. 
We set  
$$
K_{X_3^\nu}+\Theta_3=\nu_3^*(K_{X_3}+\Delta_3). 
$$ 
Then the pair 
$(X_3^\nu, \Theta_3)$ is sub log canonical because 
$(X, \Delta)$ is semi log canonical. 
\end{step}
\begin{step}\label{step6}
Let $X_3^\snc$ denote the simple normal crossing locus of $X_3$. 
Let $C$ be an irreducible component 
of $X_3\setminus X_3^\snc$. 
Then $C$ is smooth and $\dim C=\dim X_3-1$. Let 
$\alpha:W\to Y_3$ be the blow-up along $C$ and let $V$ be $\alpha^{-1}(X_3)$ with 
the reduced structure. 
Then we can directly check that 
$\beta_*\mathcal O_V\simeq \mathcal O_{X_3}$ where 
$\beta=\alpha|_V$. 
We set 
$$
K_V+\Delta_V=\beta^*(K_{X_3}+\Delta_3). 
$$ 
Note that $K_V=\beta^*K_{X_3}$ and $\Delta_V=\beta^*\Delta_3$. 
Let $\nu:V^\nu\to V$ be the normalization of $V$. 
Then $(V^\nu, \Theta_{V^\nu})$ is sub log canonical, 
where $K_{V^\nu}+\Theta_{V^\nu}=\nu^*(K_V+\Delta_V)$. 
When $C$ is a double normal crossing points locus, 
it is almost obvious. 
If $C$ is a pinch points locus, 
then it follows from Lemma \ref{prop-ne} below. 
By repeating this process finitely many 
times, we obtain a projective birational 
morphism 
$g_4:Y_4\to Y_3$ from a smooth variety $Y_4$ and a simple normal crossing divisor $X_4$ on $Y_4$ with the 
following properties. 
\begin{itemize}
\item[(i)] $f_{4*}\mathcal O_{X_4}\simeq \mathcal O_{X_3}$ where 
$f_4=g_4|_{X_4}$. 
\item[(ii)] We set 
$$
K_{X_4}+\Delta_4=f_4^*(K_{X_3}+\Delta_3). 
$$
Then $(X_4^\nu, \Theta_4)$ is sub log canonical 
where 
$\nu_4:X_4^\nu\to X_4$ is the normalization and 
$K_{X_4^\nu}+\Theta_4=\nu_4^*(K_{X_4}+\Delta_4)$. 
\end{itemize}
\end{step}
\begin{rem}
We can skip Step \ref{step6} if $X_3=X_3^\snc$. 
Therefore, we can make $h:Z\to X$ birational when 
the irreducible components of $X$ have no 
self-intersection in codimension one (see Remarks \ref{rem41} and \ref{rem4242}). 
This is because $f_5$ in Step \ref{step7} below is always birational. 
\end{rem}
\begin{step}[{cf.~\cite[Section 4]{bierstone-p}}]\label{step7}
Let $U$ be the largest Zariski open subset of $X_4$ such that 
$(U, \Delta_4|_U)$ is a simple normal crossing pair. 
Then there is a projective birational morphism $g_5:Y_5\to Y_4$ given by a composite of blow-ups 
with smooth centers with the following properties. 
\begin{itemize}
\item[(i)] Let $X_5$ be the strict transform of $X_4$ on $Y_5$. 
Then $f_5=g_5|_{X_5}:X_5\to X_4$ is an isomorphism 
over $U$. 
\item[(ii)] $(X_5, f_{5*}^{-1}\Delta_4+\Exc (f_5))$ is a simple normal crossing pair, where 
$\Exc(f_5)$ is the exceptional locus of $f_5$. 
By the construction, we can check that 
$f_{5*}\mathcal O_{X_5}\simeq \mathcal O_{X_4}$. 
\end{itemize}
\end{step}
\begin{step}
We set $M=Y_5$, $Z=X_5$, and $h=f_1\circ f_2\circ 
f_3\circ f_4\circ f_5:Z=X_5\to X$. 
Note that $M$ is a smooth 
quasi-projective variety and $Z$ is a simple normal crossing 
divisor on $M$. 
We set 
$$
K_Z+\Delta_Z=h^*(K_X+\Delta). 
$$ 
Then $(Z, \Delta_Z)$ is a simple normal crossing pair by  the above construction. 
Note that 
$\Delta_Z$ is a subboundary $\mathbb R$-divisor on $Z$. 
\end{step}
For the proof of Theorem \ref{main2}, 
we have to see that 
$h_*\mathcal O_Z(\lceil -\Delta_Z^{<1}\rceil)\simeq \mathcal O_X$. 
We will prove it in the subsequent steps. 
\begin{step}
It is obvious that 
$$f_{1*}\mathcal O_{X_1}(\lceil -\Delta_1^{<1}\rceil)\simeq \mathcal O_X.$$ 
This is because $\lceil -\Delta_1^{<1}\rceil$ is effective and 
$f_1$-exceptional. 
Note that $f_{1*}\mathcal O_{X_1}\simeq \mathcal O_X$. 
\end{step}
\begin{step}
We can easily check that 
$$
\mathcal O_{X_1}\subset (f_2\circ f_3)_*\mathcal O_{X_3}
(\lceil -\Delta_3^{<1}\rceil)\subset \mathcal O_{X_1}(\lceil 
-\Delta_1^{<1}\rceil). 
$$ 
We note that $\lceil -\Delta_3^{<1}\rceil$ is 
effective. 
Therefore, $$(f_1\circ f_2\circ f_3)_*\mathcal O_{X_3}(\lceil 
-\Delta_3^{<1}\rceil)\simeq \mathcal O_X.$$ 
\end{step}
\begin{step}
We use the notation in Step \ref{step6}. 
Let $\alpha:W\to Y_3$ be the blow-up in Step \ref{step6}. 
Note that 
$\Delta_V=\beta^*\Delta_3$ and 
$K_V=\beta^*K_{X_3}$. 
Therefore, we have 
$$
0\leq \lceil -\Delta_V^{<1}\rceil \leq \beta^*(\lceil -\Delta_3^{<1}\rceil). 
$$ 
See the description of the blow-up in Lemma \ref{prop-ne} when $\alpha:W\to Y_3$ 
is a blow-up along a pinch points locus. 
Thus 
$$
\mathcal O_{X_3}\subset \beta_*\mathcal O_V(\lceil -\Delta_V^{<1}\rceil)\subset 
\mathcal O_{X_3}(\lceil 
-\Delta_3^{<1}\rceil)
$$ 
since $\beta_*\mathcal O_V\simeq \mathcal O_{X_3}$. 
Therefore, we obtain 
that 
$$
\mathcal O_{X_3}\subset f_{4*}\mathcal O_{X_4}(\lceil -\Delta_4^{<1}\rceil)\subset 
\mathcal O_{X_3}(\lceil 
-\Delta_3^{<1}\rceil).
$$ 
This implies that $(f_1\circ f_2\circ f_3\circ f_4)_*\mathcal O_{X_4}(\lceil 
-\Delta_4^{<1}\rceil)\simeq \mathcal O_X$. 
\end{step}
\begin{step}
It is easy to see that  
$$
\mathcal O_{X_4}\subset f_{5*}\mathcal O_{X_5}(\lceil 
-\Delta_5^{<1}\rceil)\subset 
\mathcal O_{X_4}(\lceil -\Delta_4^{<1}\rceil)
$$ because $f_5$ is a birational map. 
Thus 
$$
(f_1\circ f_2\circ f_3\circ f_4\circ f_5)_*\mathcal O_{X_5} 
(\lceil -\Delta_5^{<1}\rceil)\simeq \mathcal O_X. 
$$ So we obtain $f_*\mathcal O_Z(\lceil -\Delta_Z^{<1}\rceil)\simeq \mathcal O_X$. 
\end{step}
\begin{step}
By the construction, it is easy to see that 
$K_Z+\Delta_Z\sim _{\mathbb R}h^*(K_X+\Delta)$ and 
that $W$ is an slc stratum of $(X, \Delta)$ if and only if $W$ is the 
$h$-image of some stratum of the simple normal crossing 
pair $(Z, \Delta_Z)$ (cf.~Lemma \ref{prop-ne}). 
\end{step}
\begin{step}
By applying Lemma \ref{useful-lem}, we may assume that 
there is a subboundary 
$\mathbb R$-Cartier $\mathbb R$-divisor $B$ on $M$ such that 
$B$ and $Z$ have no common irreducible components, 
$\Supp (B+Z)$ is a simple normal crossing divisor on $M$, and 
$B|_Z=\Delta_Z$ after taking some blow-ups. 
\end{step}
Therefore, $h:(Z, \Delta_Z)\to X$ gives 
the pair $[X, K_X+\Delta]$ 
a quasi-log structure with 
the desired properties (1), (2), (3), (4), and (5). 
\end{proof}

The following easy local calculation played a crucial role in the proof of Theorem \ref{main2}. 

\begin{lem}\label{prop-ne}
We consider 
$$
V=(x_1^2-x_2^2x_3=0)\subset 
\mathbb A^{n+1}=\Spec \mathbb C[x_1, \cdots, x_{n+1}]
$$ 
and 
$$
C=(x_1=x_2=0)\subset V\subset \mathbb A^{n+1}. 
$$ 
Let $\varphi:Bl_C\mathbb A^{n+1}\to \mathbb A^{n+1}$ be the blow-up 
whose center is $C$. 
Let $W\simeq C\times \mathbb P^1$ be the exceptional 
divisor of  the above blow-up and let $\pi=\varphi|_W:W\to C$ be the natural 
projection. 
We set $D=V'|_W$ where $V'$ is the strict transform of $V$ on $Bl_C\mathbb A^{n+1}$. 
Assume that $B$ is an $\mathbb R$-Cartier $\mathbb R$-divisor 
on $C$ such that $(D, \pi^*B|_D)$ is 
sub log canonical. 
Then the pair 
$(W, D+\pi^*B)$ is sub log canonical. 

Furthermore, we obtain the following description. 
A closed subset $Q$ of $C$ is the $\pi$-image of some lc center of $(W, D+\pi^*B)$ if and only if 
$Q=C$ or $Q$ is the $\pi|_D$-image of some lc center of $(D, \pi^*B|_D)$. 
\end{lem}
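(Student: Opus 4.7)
The plan is a direct local computation. In the chart $u_1=x_1/x_2$ of the blow-up, $W$ carries coordinates $(u_1,x_3,\ldots,x_{n+1})$, the strict transform $V'$ of $V$ is the smooth hypersurface $\{u_1^2=x_3\}$, and consequently $D=V'|_W=\{u_1^2-x_3=0\}$ is a smooth divisor on the smooth variety $W$. In these coordinates $\pi|_D\colon D\to C$ is the finite flat double cover branched over $R_C:=\{x_3=0\}\subset C$, with ramification divisor $R_D:=\{u_1=0\}\subset D$ satisfying $K_D\sim \pi|_D^*K_C+R_D$. The remaining chart $u_2=x_2/x_1$ contributes only \'etale points of $\pi|_D$.

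First I would deduce that $(C,B)$ is sub log canonical. The ramification formula gives
\[ K_D+(\pi^*B)|_D-R_D\sim \pi|_D^*(K_C+B), \]
exhibiting $(D,(\pi^*B)|_D-R_D)$ as the pullback pair of $(C,B)$ under the finite cover $\pi|_D$. Since $R_D\geq 0$, the hypothesis that $(D,(\pi^*B)|_D)$ is sub log canonical implies that $(D,(\pi^*B)|_D-R_D)$ is sub log canonical, and because sub log canonicity descends along finite surjective morphisms between normal varieties, $(C,B)$ is sub log canonical. Smoothness of $\pi\colon W\to C$ then gives that $(W,\pi^*B)$ is sub log canonical, which already settles $(W,D+\pi^*B)$ at every point of $W\setminus D$.

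Near $D$ I would apply inversion of adjunction. Since $D$ is a smooth Cartier divisor on the smooth variety $W$ with $D\not\subset\Supp\pi^*B$, adjunction yields $(K_W+D+\pi^*B)|_D=K_D+(\pi^*B)|_D$. Take an embedded log resolution $\mu\colon W'\to W$ of $(W,D+\Supp\pi^*B)$ whose restriction to the strict transform $\widetilde D$ of $D$ is a log resolution of $(D,(\pi^*B)|_D)$; then on $\widetilde D$ the discrepancies of $(W,D+\pi^*B)$ at exceptional divisors of $\mu$ meeting $\widetilde D$ coincide with the discrepancies of $(D,(\pi^*B)|_D)$ at the corresponding exceptional divisors on $\widetilde D$. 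The hypothesis therefore forces $(W,D+\pi^*B)$ to be sub log canonical in a neighborhood of $D$, which combined with the previous step completes the proof of the first assertion.

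For the lc-center description: $D$ is itself an lc center (coefficient one) with $\pi(D)=C$, which accounts for $Q=C$. Any other lc center whose center meets $D$ corresponds, via the discrepancy matching above, to an lc center of $(D,(\pi^*B)|_D)$, and its $\pi$-image is the $\pi|_D$-image of that lc center. An lc center disjoint from $D$ is also an lc center of $(W,\pi^*B)$, and by the smooth morphism $\pi$ it arises as $\pi^{-1}(Q_C)$ for some lc center $Q_C$ of $(C,B)$; by the finite cover correspondence, $\pi|_D^{-1}(Q_C)$ is an lc center of $(D,(\pi^*B)|_D-R_D)$, a fortiori of the larger pair $(D,(\pi^*B)|_D)$, with $\pi|_D$-image $Q_C$. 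The converse inclusion is immediate by running these correspondences backwards. The main technical obstacle is the inversion-of-adjunction step in sub-lc generality when $\pi^*B$ need not meet $D$ transversally; it is handled by the careful matching of discrepancies on the embedded log resolution, using that adjunction for a smooth divisor on a smooth ambient variety introduces no correction term.
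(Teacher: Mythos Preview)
Your preliminary reduction---deducing that $(C,B)$ is sub log canonical via the double cover $\pi|_D$ and hence that $(W,\pi^*B)$ is sub log canonical away from $D$---is fine, and the local identification of $D$ as a smooth divisor with $\pi|_D$ a double cover branched over $\{x_3=0\}$ is correct. The gap is the inversion-of-adjunction step.

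The claim that discrepancies of $(W,D+\pi^*B)$ along exceptional divisors meeting $\widetilde D$ ``coincide'' with discrepancies of $(D,(\pi^*B)|_D)$ is not justified, and inversion of adjunction in the form you invoke---that $(D,\Theta|_D)$ sub log canonical forces $(W,D+\Theta)$ sub log canonical near $D$, for $D$ smooth in a smooth $W$---genuinely fails when $\Theta$ is not effective. For instance, on $W=\mathbb A^2_{x,y}$ with $D=\{y=0\}$ and $\Theta=3\{y=x^2\}-3\{y=2x^2\}$, one has $\Theta|_D=6\{0\}-6\{0\}=0$, so $(D,\Theta|_D)$ is certainly log canonical, yet $(W,D+\Theta)$ is not sub log canonical at the origin because the component $\{y=x^2\}$ through the origin carries coefficient $3>1$. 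Your proposed discrepancy matching on a log resolution cannot detect such cancellations: restriction to $\widetilde D$ only records the combination $\sum_i c_i\,(E_i|_{\widetilde D})$, not the individual $c_i$. Although in the lemma $\pi^*B$ has the special feature of being pulled back from $C$, your argument never exploits this at the critical point, so as written it does not close.

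The paper circumvents this by a base change that straightens $D$. Pulling back along the double cover $\pi|_D:D\to C$ yields $\widetilde W=W\times_C D$ with the \emph{smooth} projection $p:\widetilde W\to D$, on which $q^*D$ splits as a sum of two sections $D_1+D_2$. One then has the identity
\[
K_{\widetilde W}-q^*\bigl(\tfrac12\pi^*A\bigr)+q^*D+q^*\pi^*B=p^*\bigl(K_D+(\pi^*B)|_D\bigr),
\]
where $A=\{x_3=0\}\subset C$ is the branch locus. Because $p$ is smooth, a log resolution of $(D,\pi^*(A+B)|_D)$ base-changes to a log resolution upstairs, and sub log canonicity of $(D,(\pi^*B)|_D)$ transfers directly to the left-hand pair; it then descends to $(W,D+\pi^*B)$ along the finite morphism $q$. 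The idea you are missing is precisely this passage to the cover, which turns the ramified $2$-section $D$ into honest sections and replaces an inversion-of-adjunction problem (delicate for non-effective boundaries) by pullback along a smooth morphism (automatic).
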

\begin{proof}

We can check that 
$K_W+D=\pi^*(K_V|_C)$ because 
$$K_{Bl_C\mathbb A^{n+1}}+V'+W=\varphi^*(K_{\mathbb A^{n+1}}+V).$$ 
Therefore, $K_W+D+\pi^*B=\pi^*(K_V|_C+B)$. 
Note that 
it is easy to see that $D$ is a smooth divisor on $W$ and that 
$\pi|_D:D\to C$ is a finite morphism with $\deg \pi|_D=2$ which 
ramifies only over $A$, where 
$$
A=(x_1=x_2=x_3=0)\subset C\subset V\subset \mathbb A^{n+1}. 
$$ 
By adjunction, 
$
K_D=(\pi|_D)^*(K_V|_C)$.  
We consider the following base change diagram 
$$
\xymatrix{W\ar[d]_\pi &\ar[l]_q \widetilde W\ar[d]^p\\ 
C &\ar[l]^{\pi|_D}D
}
$$ 
where 
$\widetilde W=W\times _CD$. 
Then we obtain 
$$
K_{\widetilde W}-q^*\left(\frac{1}{2}\pi^*A\right)+q^*D=p^*K_D
$$ 
by $K_W+D=\pi^*(K_V|_C)$ and $K_D=(\pi|_D)^*(K_V|_C)$, 
and have 
\begin{align}\tag{$\heartsuit$}\label{hea}
K_{\widetilde W}-q^*\left(\frac{1}{2}\pi^*A\right)+q^*D+q^*\pi^*B=p^*(K_D+\pi^*B|_D). 
\end{align}
Note that $q^*D=D_1+D_2$ such that 
$D_1$ and $D_2$ are sections of $p:\widetilde W\to D$. 
By the construction, we can check that 
$D_1|_{D_2}=q^*\left(\frac{1}{2}\pi^*A\right)|_{D_2}$ and 
$D_2|_{D_1}=q^*\left(\frac{1}{2}\pi^*A\right)|_{D_1}$. 
We also note that $p$ is smooth and 
$p:D_1\cap D_2\simeq \frac{1}{2}(\pi|_D)^*A$. 
We take a resolution of singularities $\alpha:D^{\dag}\to D$ of the pair 
$(D, \pi^*(A+B)|_D)$, which 
is a finite composite of blow-ups whose centers are smooth. 
We consider the base change of $p:\widetilde W\to 
D$ by $\alpha$. 
$$
\xymatrix{\widetilde W\ar[d]_p&\ar[l] \widetilde W\times _DD^\dag\ar[d]\\ 
D &\ar[l]^{\alpha}D^\dag
}
$$ 
Then $W^{\dag}=\widetilde W\times _D D^{\dag}$ is smooth since $p$ is smooth. 
By the above construction, 
we can easily see that all the discrepancy coefficients of 
$(\widetilde W, -q^*\left(\frac{1}{2}\pi^*A\right)+q^*D+q^*\pi^*B)$ 
are $\geq -1$ since $(D, \pi^*B|_D)$ is sub log canonical and the equation (\ref{hea}) holds. 
Therefore,  
$(\widetilde W, -q^*\left(\frac{1}{2}\pi^*A\right)+q^*D+q^*\pi^*B)$ 
is sub log canonical. 
Since 
$$
K_{\widetilde W}-q^*\left(\frac{1}{2}\pi^*A\right)+q^*D+q^*\pi^*B=q^*(K_W+D+\pi^*B), 
$$ 
we have that $(W, D+\pi^*B)$ has only sub log canonical singularities. 

The description of the $\pi$-images of lc centers of $(W, D+\pi^*B)$ is almost obvious by the above 
discussions. 
\end{proof}

\section{Proofs of the fundamental theorems}\label{sec4}

In this section, we prove the theorems in Section \ref{sec1}. First, 
let us recall Koll\'ar's double covering trick.  

\begin{lem}[A natural double cover due to Koll\'ar]\label{double} 
Let $(X, \Delta)$ be a semi log canonical pair. 
Then we can construct a finite 
morphism $p:\widetilde X\to X$ with the following properties. 
\begin{itemize}
\item[(1)] Let $X^0$ be the largest Zariski open subset whose singularities are double normal crossing 
points only. 
Then $$p^0=p|_{p^{-1}(X^0)}:\widetilde X^0:=p^{-1}(X^0)\to X^0$$ is an \'etale double cover. 
\item[(2)] $\widetilde X$ satisfies Serre's $S_2$ condition, $p$ is \'etale in codimension one, 
the normalization of $\widetilde X$ is a disjoint union of two copies of the normalization of $X$. 
\item[(3)] The irreducible components of $\widetilde X$ are smooth in codimension one. 
\end{itemize} 
In particular, $(\widetilde X, \widetilde \Delta)$ is semi log canonical 
where 
$$ 
K_{\widetilde X}+\widetilde \Delta=p^*(K_X+\Delta). 
$$
\end{lem}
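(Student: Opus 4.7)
My plan is to build $\widetilde X$ by gluing two copies of the normalization $X^\nu$ together along the conductor in a twisted way, and then to read off all of the asserted properties from this construction. Let $\nu\colon X^\nu\to X$ be the normalization and let $D=\nu^{-1}(\mathcal C_X)\subset X^\nu$ be the preimage of the conductor. Because $X$ is normal crossing in codimension one, the finite morphism $\nu|_D\colon D\to \mathcal C_X$ has generic degree two, and its Galois involution extends uniquely to an involution $\tau\colon D\to D$ over $\mathcal C_X$ which, at a generic point of the double normal crossing locus, swaps the two branches.

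The construction I would use is the following. Take two labelled copies $X_1^\nu,X_2^\nu$ of $X^\nu$, with conductor loci $D_1,D_2$, and form the pushout
\[
\widetilde X := (X_1^\nu\sqcup X_2^\nu)\,\big/\,\bigl((p,1)\sim (\tau(p),2)\text{ for }p\in D\bigr).
\]
Equivalently, $\widetilde X=\Spec_X \widetilde{\mathcal A}$, where $\widetilde{\mathcal A}\subset \nu_*\mathcal O_{X_1^\nu}\oplus \nu_*\mathcal O_{X_2^\nu}$ is the equalizer of the two restriction maps $(s_1,s_2)\mapsto s_1|_D$ and $(s_1,s_2)\mapsto \tau^*(s_2|_D)$. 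That this pushout exists as a finite $X$-scheme and that it is Serre $S_2$ (being the gluing of two $S_2$ schemes along a codimension-one subscheme via a generically étale identification) is exactly the standard correspondence between demi-normal schemes and pairs of a normal scheme with an involution on a codimension-one subscheme, developed in \cite[Section 5.3]{kollar-book}. By construction the normalization of $\widetilde X$ is $X_1^\nu\sqcup X_2^\nu$, and there is a factorization $\nu\sqcup\nu = p\circ \widetilde\nu$ where $p\colon\widetilde X\to X$ and $\widetilde\nu\colon X_1^\nu\sqcup X_2^\nu\to \widetilde X$ are the obvious maps.

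To verify (1)--(3) I would analyze fibers of $p$ in codimension one of $X$. Over a smooth point of $X$ the normalization is an isomorphism and no gluing occurs, so the fiber is two disjoint étale sheets. Over a double normal crossing point $x$ with $\nu^{-1}(x)=\{y_1,y_2\}$ and $\tau(y_1)=y_2$, the equivalence relation identifies $(y_1,1)\sim(y_2,2)$ and $(y_2,1)\sim(y_1,2)$, producing two double normal crossing points of $\widetilde X$ over $x$; hence $p^0$ is étale and (1) holds. The map $p$ is therefore étale in codimension one, the normalization is $X_1^\nu\sqcup X_2^\nu$, and $\widetilde X$ is $S_2$, giving (2). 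Each irreducible component of $\widetilde X$ is the image (in fact the semi-normal gluing along $D$) of an irreducible component of the \emph{normal} scheme $X^\nu$, so it is smooth in codimension one, giving (3). For the final assertion, since $K_X+\Delta$ is $\mathbb R$-Cartier the divisor $\widetilde\Delta:=p^*(K_X+\Delta)-K_{\widetilde X}$ is $\mathbb R$-Cartier, and because $p$ is étale in codimension one $\widetilde\Delta$ avoids $\mathcal C_{\widetilde X}$. Pulling the adjunction $K_{X^\nu}+\Theta=\nu^*(K_X+\Delta)$ back along the two-sheeted étale cover $\widetilde p:=\nu\sqcup\nu\colon X_1^\nu\sqcup X_2^\nu\to X^\nu$ shows that the normalization pair of $(\widetilde X,\widetilde\Delta)$ is the disjoint union of two copies of $(X^\nu,\Theta)$, which is log canonical, so $(\widetilde X,\widetilde\Delta)$ is semi log canonical by Definition \ref{slc-pairs}.

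\textbf{Main obstacle.} The conceptual content of the lemma is entirely packaged in showing the twisted pushout above exists as a finite $S_2$ scheme with normalization $X_1^\nu\sqcup X_2^\nu$, and that $p$ is étale in codimension one. This is precisely the gluing theory for demi-normal schemes in \cite[Chapter 5]{kollar-book}; once it is invoked, the remaining verifications are local computations at codimension-one points.
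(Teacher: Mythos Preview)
The paper does not actually prove this lemma; immediately after the statement it writes ``For the construction and related topics, see \cite[5.21]{kollar-book}.'' Your proposal reconstructs precisely Koll\'ar's construction (two copies of $X^\nu$ glued along the conductor via the branch-swapping involution), so in substance you and the paper agree, with you supplying the details the paper outsources.

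One point deserves sharpening. Your justification of (3) --- ``each irreducible component of $\widetilde X$ is the image (in fact the semi-normal gluing along $D$) of an irreducible component of the normal scheme $X^\nu$, so it is smooth in codimension one'' --- is not quite right as phrased: the image of a normal variety under a finite map need not be normal in codimension one (the nodal cubic is the image of $\mathbb P^1$). The actual reason (3) holds is that your gluing relation $(p,1)\sim(\tau(p),2)$ always identifies a point of copy $1$ with a point of copy $2$, never two points of the same copy. Hence the restriction of $\widetilde\nu$ to any single irreducible component $V_i^\nu$ is injective in codimension one, so its image in $\widetilde X$ is isomorphic to $V_i^\nu$ in codimension one and therefore smooth there. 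This is exactly the mechanism by which the double cover kills self-intersections, and it is worth stating explicitly. With that correction your argument is complete.
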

For the construction and related topics, see \cite[5.23]{kollar-book}. 
Let us start the proofs of the fundamental theorems in Section \ref{sec1}. 

\begin{proof}[Proof of {\em{Theorem \ref{vani-thm}}} and {\em{Theorem \ref{vani-thm2}}}] 
It is sufficient to prove Theorem \ref{vani-thm2}. 
This is because Theorem \ref{vani-thm} is a special case of Theorem \ref{vani-thm2}. 
By Lemma \ref{double}, we can take a double cover 
$p:\widetilde X\to X$. 
Since $\mathcal O_X(D)$ is a direct summand of 
$p_*\mathcal O_{\widetilde X}(p^*D)$, 
we may assume that the irreducible components of $X$ are smooth in codimension one by 
replacing $X$ with $\widetilde X$. 
Without loss of generality, we may assume that $S$ is affine by shrinking $S$. 
Therefore, $X$ is quasi-projective. 
By Theorem \ref{main2}, we can construct a 
quasi-log resolution $h:Z\to X$. 
Note that we may assume that $h$ is birational 
by Remark \ref{rem13}. We may further assume that 
$\Supp h^*D\cup \Supp \Delta_Z$ is a simple normal crossing divisor on $Z$ by \cite[Theorem 1.4]{bierstone-p} 
when $D$ is not a Cartier divisor. 
By the construction, 
$$
h^*D+\lceil -\Delta_Z^{<1}\rceil-(K_Z+
\Delta_Z^{=1}+\{\Delta_Z\})\sim _{\mathbb R}h^*(D-(K_X+\Delta)). 
$$ 
If $D$ is Cartier, then 
$$
R^i\pi_*h_*\mathcal O_Z(h^*D+\lceil -\Delta_Z^{<1}\rceil)\simeq R^i\pi_*\mathcal O_X(D)=0$$ 
for every $i>0$ by \cite[Theorem 2.47 or Theorem 3.38]{book}. 
We note that 
$h_*\mathcal O_Z(h^*D+\lceil -\Delta_Z^{<1}\rceil)\simeq \mathcal O_X(D)$. 
From now on, we assume that $D$ is not Cartier. 
Let $\{h^*D\}=\sum _i b_i B_i$ and $\{\Delta_Z\}=\sum _i c_i B_i$ be the irreducible decompositions. 
If $c_i\geq b_i$, then we set $d_i=0$ and $e_i=c_i-b_i\geq 0$. 
If $c_i<b_i$, then we set $d_i=1$ and $e_i=c_i+1-b_i<1$. 
We define $E=\lceil -\Delta_Z^{<1}\rceil+\sum _i d_i B_i$ and $F=\sum _i e_i B_i$. 
Then we have 
$$
\lfloor h^*D\rfloor +E-(K_Z+\Delta_Z^{=1}+F)\sim _{\mathbb R}h^*(D-(K_X+\Delta)). 
$$ 
By the construction, $E$ is an effective $h$-exceptional 
divisor on $X$ and $\{F\}=0$. 
Note that 
$E$ and $\lfloor h^*D\rfloor$ are both Cartier divisors 
on $Z$. 
This is because 
$\Supp h^*D\cup \Supp \Delta_Z$ is a simple 
normal crossing divisor on $Z$ 
and $h^*D$ and $\Delta_Z$ are $\mathbb R$-Cartier $\mathbb R$-divisors on $Z$. 
By \cite[Theorem 2.47 or Theorem 3.38]{book}, we obtain 
that 
$$
R^i\pi_*h_*\mathcal O_Z(\lfloor h^*D\rfloor +E)=0$$ for every $i>0$. 
Therefore, 
$R^i\pi_*\mathcal O_X(D)=0$ for every $i>0$ since
$h_*\mathcal O_Z(\lfloor h^*D\rfloor +E)\simeq \mathcal O_X(D)$. 
\end{proof}

\begin{proof}[Proof of {\em{Theorem \ref{kod-vani}}}] 
We take a Cartier divisor $L$ on $X$ such that 
$\mathcal O_X(L)\simeq \mathcal L$. Without loss of generality, 
we may assume that 
the supports of the Weil divisor $K_X$ and $L$ do not contain any irreducible components of the conductor 
of $X$. 
Since $(K_X+L)-K_X$ is ample by 
the assumption, we obtain $H^i(X, \omega_X\otimes \mathcal L)=0$ for every $i>0$ by 
Theorem \ref{vani-thm}. 
Note that $\omega_X\otimes \mathcal L\simeq \mathcal O_X(K_X+L)$. 
\end{proof}

\begin{proof}[Proof of {\em{Corollary \ref{cor-new}}}] 
Without loss of generality, we may assume that the 
support of the Weil divisor $K_X$ does not contain any irreducible components of the conductor of $X$. 
By the assumption, $mK_X-K_X$ is ample if $m\geq 2$. Therefore, 
we obtain $H^i(X, \mathcal O_X(mK_X))=0$ for every $i>0$ and $m\geq 2$ by Theorem \ref{vani-thm}. 
\end{proof}

\begin{proof}[Proof of {\em{Theorem \ref{vani-thm3}}}]
Since the claim is local, 
we may assume that $S$ is quasi-projective by shrinking 
$S$. 
By Theorem \ref{main2}, $[X, K_X+\Delta]$ has a quasi-log structure induced by 
the semi log canonical structure of $(X, \Delta)$ since 
$X$ is quasi-projective. 
Therefore, $R^i\pi_*(\mathcal I_{X'}\otimes \mathcal O_X(D))=0$ for every 
$i>0$ by \cite[Theorem 3.39]{book}. 
\end{proof}

\begin{rem}\label{rem42}
Let $\{C_i\}_{i\in I}$ be the set of slc strata of $(X, \Delta)$. 
We set 
$$
I_1=\{i\in I \,|\, C_i \subset X'\}
$$ 
and 
$$
I_2=\{i\in I \,|\, C_i\not\subset X'\}. 
$$ 
Then, for the vanishing theorem:~Theorem \ref{vani-thm3}, 
the following weaker assumption is sufficient. 
\begin{itemize}
\item $D-(K_X+\Delta)$ is nef over $S$ and 
$(D-(K_X+\Delta))|_{C_i}$ is big over $S$ for every $i\in I_2$. 
\end{itemize}
It is obvious by the proof given in \cite[Theorem 3.39]{book}. 
\end{rem} 

\begin{proof}[Proof of {\em{Theorem \ref{thm-torsion}}}] 
It is obvious that the claim holds for $\pi_*\mathcal O_X(D)$. 
By Lemma \ref{double}, 
we can take a natural double cover $p:\widetilde X\to X$. 
Since $\mathcal O_X(D)$ is a direct summand of $p_*\mathcal O_{\widetilde X}
(p^*D)$, we may 
assume that the irreducible components of $X$ have no self-intersection in codimension one by replacing 
$X$ with $\widetilde X$. 
Without loss of generality, we may assume that 
$S$ is affine by shrinking $S$. 
Therefore, $X$ is quasi-projective and we can apply Theorem \ref{main2}. 
Let $h:Z\to X$ be a morphism constructed in Theorem \ref{main2}. 
We may assume that  $h$ is birational (see Remark \ref{rem13}). 
Note that 
$$
h^*D+\lceil -\Delta_Z^{<1}\rceil-(K_Z+\{\Delta_Z\}+\Delta_Z^{=1})
\sim _{\mathbb R} h^*(D-(K_X+\Delta)) 
$$ 
is $(\pi\circ h)$-semi-ample.  
As in the proof of Theorem \ref{vani-thm} and Theorem \ref{vani-thm2}, 
we can write 
$$
\lfloor h^*D\rfloor +E-(K_Z+\Delta_Z^{=1}+F)\sim_\mathbb R 
h^*(D-(K_X+\Delta))
$$
when $D$ is not Cartier. 
Therefore, every associated prime of $R^i(\pi\circ h)_*\mathcal O_Z(\lfloor 
h^*D\rfloor+E)$ 
is the generic point of the $\pi$-image of some 
slc stratum of $(X, \Delta)$ for every $i$ 
by Theorem \ref{main2} (5) (see, for example, \cite[Theorem 2.39 (i)]{book} 
and \cite[Theorem 1.1 (i)]{fujino-vanishing}). 
Since 
\begin{align*}
R^1\pi_*\mathcal O_X(D)&\simeq 
R^1\pi_*(h_*\mathcal O_Z(\lfloor h^*D\rfloor +E))\\ &\subset 
R^1(\pi\circ h)_*\mathcal O_Z(\lfloor h^*D\rfloor+E), 
\end{align*}
the claim holds for $R^1\pi_*\mathcal O_X(D)$. 
When $D$ is Cartier, 
it is sufficient to replace $\lfloor h^*D\rfloor+E$ with 
$h^*D+\lceil -\Delta_Z^{<1}\rceil$ in the above arguments. 
Let $A$ be a sufficiently ample general effective Cartier divisor 
on $X$. 
By considering the short exact sequence 
$$
0\to \mathcal O_X(D)\to \mathcal O_X(D+A)\to \mathcal O_A(D+A)\to 0, 
$$
we obtain 
$$
R^i\pi_*\mathcal O_X(D)\simeq R^{i-1}(\pi|_A)_*\mathcal O_A(D+A)
$$ 
for every $i\geq 2$ since $R^i\pi_*\mathcal O_X(D+A)=0$ for $i\geq 1$. 
Note that $$R^i\pi_*\mathcal O_X(D)\simeq 
R^{i-1}(\pi|_{A'})_*\mathcal O_{A'}(D+A')$$ holds for every $i\geq 2$ and every general 
member $A'$ of $|A|$. 
By induction on dimension, 
every associated prime of $R^{i-1}(\pi|_A)_*\mathcal O_A(D+A)$ is the generic point 
of the $\pi|_A$-image of some 
slc stratum of $(A, \Delta|_A)$ for every $i$. 
Note that $(A, \Delta|_A)$ is semi log canonical with 
$(K_X+A+\Delta)|_A=K_A+\Delta|_A$ and that 
$h^*A=h_*^{-1}A$ and $\Supp (h_*^{-1}A+\Delta_Z)$ are simple normal crossing 
divisors on $Z$ since $A$ is general. 
The above statements also hold for any general member 
$A'$ of $|A|$. 
Therefore, the claim holds for $R^i\pi_*\mathcal O_X(D)$, 
which is isomorphic to $R^{i-1}(\pi|_{A'})_*\mathcal 
O_{A'}(D+{A'})$ for every $i\geq 2$ and 
every general member $A'$ of $|A|$. 
It completes the proof. 
\end{proof}

\begin{proof}[Proof of {\em{Theorem \ref{thm-adj}}}] 
By Theorem \ref{main2}, 
$[X, K_X+\Delta]$ has a quasi-log structure. 
Note that $W$ is an slc stratum of $(X, \Delta)$ if and only if 
$W$ is a qlc center of $[X, K_X+\Delta]$ by Theorem \ref{main2} (5). 
Therefore, by adjunction for quasi-log varieties (see, for example, \cite[Theorem 3.39]{book} and 
\cite[Theorem 3.6]{fujino-qlog}), 
$[X', (K_X+\Delta)|_{X'}]$ has a natural quasi-log structure induced by 
the quasi-log structure of $[X, K_X+\Delta]$. 
Since $[X', (K_X+\Delta)|_{X'}]$ is a qlc pair, 
$X'$ is semi-normal (see, for example, \cite[Remark 3.33]{book} and \cite[Remark 3.2]{fujino-qlog}). 
\end{proof}

\begin{proof}[Proof of {\em{Theorem \ref{vani-thm4}}}] 
Without loss of generality, we may assume that $S$ is affine by shrinking $S$. 
Therefore, we may assume that $X$ is quasi-projective 
and $[X, K_X+\Delta]$ has a quasi-log structure by Theorem \ref{main2}. 
By Theorem \ref{thm-adj}, $[X', (K_X+\Delta)|_{X'}]$ has a natural quasi-log structure 
induced by that of $[X, K_X+\Delta]$. 
Therefore, this theorem is a special case of the vanishing 
theorem for quasi-log varieties (see, for example, \cite[Theorem 3.39 (ii)]{book}). 
\end{proof}

\begin{rem}
In Theorems \ref{vani-thm2}, \ref{vani-thm3}, \ref{thm-torsion}, 
\ref{thm-adj}, and \ref{vani-thm4},  
if $(X, \Delta)$ is log canonical, then 
it is sufficient to assume that $\pi$ is {\em{proper}}. 
This is because $(X, \Delta)$ has a natural quasi-log structure 
when $(X, \Delta)$ is log canonical 
(see, for example, \cite[Example 3.42]{book} and \cite[Proposition 3.3]{fujino-qlog}). 
\end{rem}

\begin{proof}[Proof of {\em{Theorem \ref{bpf}}} and {\em{Theorem \ref{bpf2}}}] 
By shrinking $S$, we may assume that 
$S$ is affine and $X$ is quasi-projective. 
Therefore, by applying Theorem \ref{main2}, 
$(X, \Delta)$ has a natural 
quasi-log structure. 
Thus, by \cite[Theorem 3.36]{book} and \cite[Theorem 4.1]{book}, 
we obtain that $\mathcal O_X(mD)$ is $\pi$-generated for every $m\gg 0$. 
\end{proof}

\begin{proof}[Proof of {\em{Theorem \ref{rational}}}]
The proof of \cite[Theorem 15.1]{fujino-fund} works with only minor modifications 
if we adopt Theorem \ref{vani-thm3}. 
We do not need the theory of quasi-log varieties for the proof of the rationality theorem. 
\end{proof}

\begin{proof}[Proof of {\em{Theorem \ref{cone-contraction}}}]
The proof of \cite[Theorem 16.1]{fujino-fund} works with only minor 
modifications by Theorem \ref{rational} and Theorem \ref{bpf}. 
Here we only give a supplementary argument on (1). 
Let $R$ be a $(K_X+\Delta)$-negative extremal ray. 
Then there is a contraction morphism $\varphi_R:X\to Z$ over $S$ associated to $R$ (cf.~(3)). 
Note that $-(K_X+\Delta)$ is $\varphi_R$-ample. 
Let $\nu:X^\nu\to X$ be the normalization. 
We set $K_{X^\nu}+\Theta=\nu^*(K_X+\Delta)$. 
Then $-(K_{X^\nu}+\Theta)$ is $(\varphi_R\circ \nu)$-ample and 
$\varphi_R\circ \nu$ is nontrivial. 
Note that $(X^\nu, \Theta)$ is log canonical. 
By \cite[Theorem 18.2]{fujino-fund}, we can find a rational curve $C'$ on $X^\nu$ such that 
$-(K_{X^\nu}+\Theta)\cdot C'\leq 2\dim X^\nu$ and 
$(\varphi_R\circ \nu)(C')$ is a point. 
We set $C=\nu(C')$. Then 
$C$ is a rational curve on $X$ and $-(K_X+\Delta)\cdot C\leq 2\dim X$ such that 
$\varphi_R(C)$ is a point. 
Therefore, $C$ is a desired curve in $(1)$. 
\end{proof}

We close this section with an important example. 
This example shows that 
we can not always run the minimal model program even for semi log canonical 
{\em{surfaces}}. For some related examples, see \cite{ko-ex}. 
However, Kento Fujita (\cite{fujita2}) establishes a variant of 
the minimal model program for {\em{semi-terminal}} pairs 
in order to construct {\em{semi-terminal modifications}} 
for quasi-projective {\em{demi-normal}} pairs. 
His arguments use not only Theorem \ref{cone-contraction}, 
but also Koll\'ar's gluing theory (see \cite[Section 5]{kollar-book}). 
For the details, see \cite{fujita2}. 

\begin{ex}[{see \cite[Example 3.76]{book}}]\label{ex-bad} We consider 
the first projection $p:\mathbb P^1\times \mathbb P^1\to \mathbb P^1$. 
We take a blow-up $\mu:Z\to \mathbb P^1\times \mathbb P^1$ at 
$(0, \infty)$. 
Let $A_{\infty}$ (resp.~$A_0$) be 
the strict transform of $\mathbb P^1\times \{\infty\}$ 
(resp.~$\mathbb P^1\times \{0\}$) on $Z$. 
We define $M=\mathbb P_Z(\mathcal O_Z\oplus \mathcal O_Z(A_0))$ and 
$X$ is the restriction of $M$ on $(p\circ \mu)^{-1}(0)$. 
Then $X$ is a simple normal crossing divisor on $M$. 
More explicitly, $X$ is a $\mathbb P^1$-bundle 
over $(p\circ \mu)^{-1}(0)$ and is obtained by gluing $X_1=
\mathbb P^1\times \mathbb P^1$ 
and $X_2=
\mathbb P_{\mathbb P^1}(\mathcal O_{\mathbb P^1}\oplus \mathcal 
O_{\mathbb P^1}(1))$ along a fiber. 
In particular, $(X, 0)$ is a semi log canonical surface. 
By the construction, $M\to Z$ has two sections. 
Let $D^+$ (resp.~$D^-$) be the restriction of the section 
of $M\to Z$ corresponding to 
$\mathcal O_Z\oplus \mathcal O_Z(A_0)\to \mathcal O_Z(A_0)\to 0$ 
(resp.~$\mathcal O_Z\oplus \mathcal O_Z(A_0)\to \mathcal O_Z\to 0$). 
Then it is easy to see that $D^+$ is 
a nef Cartier divisor on $X$ 
and that 
the linear system $|mD^+|$ is 
free for every $m>0$. 
Note that $M$ is a projective 
toric variety. 
Let $E$ be the section of $M\to Z$ corresponding 
to $\mathcal O_Z\oplus \mathcal O_Z(A_0)\to 
\mathcal O_Z(A_0)\to 0$. 
Then, it is easy to see that 
$E$ is a nef Cartier divisor 
on $M$. Therefore, the linear system $|E|$ is 
free. In particular, $|D^+|$ is free on $X$ since $D^+=E|_{X}$. So, 
$|mD^+|$ is free for every $m> 0$.
We take a general member $B_0\in |mD^+|$ with 
$m\geq 2$. 
We consider $K_X+B$ with $B=D^-+B_0+B_1+B_2$, 
where $B_1$ and $B_2$ are general fibers of $X_1=\mathbb P^1\times 
\mathbb P^1\subset X$. 
We note that $B_0$ does not intersect 
$D^-$. 
Then $(X, B)$ is an embedded simple normal crossing 
pair. In particular, $(X, B)$ is a semi log canonical surface. 
It is easy to see that there exists only one 
integral curve $C$ on $X_2=
\mathbb P
_{\mathbb P^1}(\mathcal O_{\mathbb P^1}\oplus \mathcal O_{\mathbb P^1}
(1))\subset X$ such that 
$C\cdot (K_X+B)<0$. Note that 
$C$ is nothing but the negative 
section of $X_2=\mathbb P_{\mathbb P^1}(\mathcal O_{\mathbb P^1}
\oplus \mathcal O_{\mathbb P^1}(1))\to \mathbb P^1$. 
We also note that 
$(K_X+B)|_{X_1}$ is ample on $X_1$. 
By the cone theorem (see Theorem \ref{cone-contraction}), 
we obtain 
$$
\overline {NE}(X)=\overline {NE}(X)_{(K_X+B)\geq 0}+\mathbb R_{\geq 0} 
[C]. 
$$ 
By the contraction theorem (see Theorem \ref{cone-contraction}), we have $\varphi:X\to W$ which 
contracts $C$. We can easily see that 
$K_W+B_W$, where $B_W=\varphi_*B$, is not $\mathbb Q$-Cartier 
because $C$ is not $\mathbb Q$-Cartier on $X$. 
Therefore, we can not always run the minimal model program for 
semi log canonical surfaces. 
\end{ex}

For a new framework of the minimal model program for {\em{log surfaces}}, 
see \cite{fujino-surface}, \cite{fujino-tanaka}, \cite{tanaka}, and \cite{tanaka2}. 

\section{Miscellaneous applications}\label{sec-mis}

In this paper, we adopt the following definition of 
{\em{stable pairs}}. 
It is a generalization of the notion of {\em{stable pointed curves}}. 

\begin{defn}[Stable pairs]\label{def-stable}
Let $(X, \Delta)$ be a projective semi log canonical 
pair such that $K_X+\Delta$ is ample. 
Then we call $(X, \Delta)$ a {\em{stable pair}}. 
\end{defn}

Stable pairs will play important roles in the theory of moduli of canonically polarized 
varieties. 

\subsection{Base point free theorems revisited}
First, we prove the base point free theorem for $\mathbb R$-divisors 
(see \cite[Theorem 17.1]{fujino-fund}). 
It is an easy consequence of the base point free theorem (see Theorem \ref{bpf}) and 
the cone theorem (see Theorem \ref{cone-contraction}). For the definition and basic properties of 
$\pi$-semi-ample $\mathbb R$-Cartier 
$\mathbb R$-divisors, see, for example, \cite[Definition 4.11, Lemmas 4.13 and 4.14]{fujino-fund}. 

\begin{thm}[Base point free theorem for $\mathbb R$-divisors] 
Let $(X, \Delta)$ be a semi log canonical pair and 
let $\pi:X\to S$ be a projective morphism 
onto an algebraic variety $S$. 
Let $D$ be a $\pi$-nef $\mathbb R$-Cartier $\mathbb R$-divisor 
on $X$. 
Assume that $D-(K_X+\Delta)$ is $\pi$-ample. 
Then $D$ is $\pi$-semi-ample. 
\end{thm}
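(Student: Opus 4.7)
The plan is to reduce the theorem to Theorem \ref{bpf} together with the cone and contraction theorem (Theorem \ref{cone-contraction}) by a perturbation argument. Since the assertion is local on $S$, we may shrink $S$ so that it is affine and $X$ is quasi-projective.

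First, I would consider the null-face
$$F := \{\, z \in \overline{NE}(X/S) \,:\, D\cdot z = 0\,\}$$
of $\overline{NE}(X/S)$, which is a face because $D$ is $\pi$-nef. For every $z \in F \setminus \{0\}$, the $\pi$-ampleness of $D-(K_X+\Delta)$ forces $(K_X+\Delta)\cdot z < D\cdot z = 0$, so $F$ lies in the $(K_X+\Delta)$-negative part of $\overline{NE}(X/S)$. By Theorem \ref{cone-contraction}~(1)--(2), $F$ is a rational polyhedral extremal face generated by finitely many of the rational curves $C_j$, and Theorem \ref{cone-contraction}~(3) supplies a contraction $\varphi_F : X \to Z$ over $S$; write $\pi_Z : Z \to S$ for the induced morphism.

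Next, I would express $D$ as a positive $\mathbb R$-linear combination of $\mathbb Q$-Cartier perturbations lying in $F^\perp \subset N^1(X/S)_{\mathbb R}$. Since $F$ is cut out by finitely many rational linear equations, $F^\perp$ is a rationally defined subspace, and by the very definition of $F$, the class of $D$ lies in the relative interior of the face $F^\perp \cap \overline{NE}(X/S)^\vee$ of the $\pi$-nef cone. Density of rational classes in that relative interior, together with openness of the $\pi$-ample cone, allows us to write
$$D = \sum_{i=1}^k r_i D_i,\qquad r_i>0,$$
with each $D_i$ a $\pi$-nef $\mathbb Q$-Cartier $\mathbb Q$-divisor whose class lies in $F^\perp$ and such that $D_i-(K_X+\Delta)$ is still $\pi$-ample. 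For each $i$, Theorem \ref{bpf} applied with $a=1$ yields that $D_i$ is $\pi$-semi-ample. Clearing denominators, $n_i D_i$ is a Cartier divisor satisfying $(n_i D_i)\cdot C = 0$ for every $[C] \in F$, so Theorem \ref{cone-contraction}~(4) produces a line bundle $M_i$ on $Z$ with $n_i D_i \sim \varphi_F^* M_i$. Combining these,
$$D = \sum r_i D_i \sim_{\mathbb R} \varphi_F^*D_Z,\qquad D_Z := \sum \tfrac{r_i}{n_i} M_i.$$
Any curve $C \subset Z$ with $\pi_Z(C)$ a point lifts to a curve $C' \subset X$ with $\varphi_F(C')=C$, and since $[C'] \notin F$ we obtain $D_Z \cdot C > 0$. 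Kleiman's criterion for relative ampleness applied to the projective morphism $\pi_Z$ then gives that $D_Z$ is $\pi_Z$-ample, so $D \sim_{\mathbb R} \varphi_F^* D_Z$ is $\pi$-semi-ample.

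The main obstacle is the perturbation step, namely expressing $D$ as a positive $\mathbb R$-combination of $\pi$-nef $\mathbb Q$-Cartier divisors that \emph{still} vanish on the whole of $F$. Rationality of $F$ provided by the cone theorem is what makes this possible; without it $D$ could sit on an irrational portion of the nef cone and admit no rational approximation within its own face. Once the approximation has been found, the remainder is a routine package combining the $\mathbb Q$-base point free theorem, descent via Theorem \ref{cone-contraction}~(4), and Kleiman's criterion.
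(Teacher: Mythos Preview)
Your approach is essentially the one the paper points to: use Theorem~\ref{cone-contraction}~(2) to see that $F=D^\perp\cap\overline{NE}(X/S)$ is a rational $(K_X+\Delta)$-negative face, perturb $D$ within $F^\perp$ to $\pi$-nef $\mathbb Q$-Cartier divisors $D_i$ with $D_i-(K_X+\Delta)$ still $\pi$-ample, and invoke Theorem~\ref{bpf} for each $D_i$. This is exactly the strategy of \cite[Theorem~17.1]{fujino-fund} to which the paper refers.

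One comment: the proof is already finished once Theorem~\ref{bpf} shows that each $D_i$ is $\pi$-semi-ample, since a positive $\mathbb R$-linear combination of $\pi$-semi-ample Cartier divisors is $\pi$-semi-ample by definition (cf.~\cite[Definition~4.11, Lemmas~4.13 and 4.14]{fujino-fund}). Your subsequent descent to $Z$ via $\varphi_F$ and appeal to Kleiman's criterion are therefore superfluous, and the Kleiman step as written is in fact incomplete: positivity of $D_Z$ on every curve contracted by $\pi_Z$ does not by itself yield $\pi_Z$-ampleness, which requires positivity on all of $\overline{NE}(Z/S)\setminus\{0\}$. This can be repaired (for instance by showing $(\varphi_F)_*\overline{NE}(X/S)=\overline{NE}(Z/S)$ using the local polyhedrality of $\overline{NE}(X/S)$ near $F$ supplied by Theorem~\ref{cone-contraction}~(2)), but it is cleaner simply to stop after applying Theorem~\ref{bpf}.
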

\begin{proof}
This theorem is an easy consequence of Theorem \ref{bpf} and 
Theorem \ref{cone-contraction} (2). For the details, see the proof of 
\cite[Theorem 17.1]{fujino-fund}. 
\end{proof}

Next, we discuss a generalization of Koll\'ar's 
effective base point free theorem for semi log canonical pairs (cf.~\cite{fujino-eff}). 

\begin{thm}[Effective base point free theorem]\label{63}
Let $(X, \Delta)$ be a projective semi log canonical pair such that $\Delta$ is a $\mathbb Q$-divisor and 
let $L$ be a nef Cartier divisor on $X$. 
Assume that $aL-(K_X+\Delta)$ is nef and log big with 
respect to $(X, \Delta)$ for some real number $a>0$. 
Then there exists a positive integer $m=m(n, a)$, which only depends on 
$n=\dim X$ and $a$, such that $|mL|$ is free. 
\end{thm}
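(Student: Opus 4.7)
The plan is to derive Theorem \ref{63} from the corresponding effective base point free statement in the quasi-log category by means of Theorem \ref{main2}. Since $\Delta$ is a $\mathbb Q$-divisor, the remark immediately following Theorem \ref{main2} supplies a quasi-log resolution $h:(Z,\Delta_Z)\to X$ from a globally embedded simple normal crossing pair with $\dim Z=n$, $h_*\mathcal O_Z\simeq\mathcal O_X$, and $K_Z+\Delta_Z\sim_{\mathbb Q}h^*(K_X+\Delta)$, so that $[X,K_X+\Delta]$ is a qlc pair. By property (5) of Theorem \ref{main2}, the slc strata of $(X,\Delta)$ coincide with the qlc centers of $[X,K_X+\Delta]$; thus the nef and log big hypothesis on $aL-(K_X+\Delta)$ transfers verbatim to the qlc structure.

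With this translation in hand, I would invoke Kollár's effective base point free theorem inside the qlc framework, following the strategy of \cite{fujino-eff}. The argument proceeds by induction on $n$. The Nadel-type vanishing Theorem \ref{vani-thm3} (with the mild relaxation in Remark \ref{rem42}) is used to lift sections from unions of qlc centers, producing global sections of $\mathcal O_X(mL)$ that separate points off a proper closed subset. A Kawamata-style tie-breaking perturbation then adds a small multiple of a general member of $|mL|$ to $\Delta$ so as to create a new qlc center passing through a prescribed base point; finally, adjunction for qlc pairs (Theorem \ref{thm-adj}) restricts the pair to a union of qlc centers of strictly smaller dimension, where the inductive hypothesis supplies the bound. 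Because the whole induction is carried out inside the qlc category and all cohomological inputs are Hodge-theoretic vanishings with purely numerical dependence on the data, the resulting bound $m=m(n,a)$ depends only on $n$ and the coefficient $a$.

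The main obstacle is verifying that Kollár's tie-breaking and restriction machinery remain entirely within the qlc framework provided by Theorem \ref{main2}. The crucial closure property is that any union of slc strata $X'\subset X$, when equipped with $(K_X+\Delta)|_{X'}$, again carries a natural qlc structure whose qlc centers are exactly the slc strata of $(X,\Delta)$ contained in $X'$; this is precisely Theorem \ref{thm-adj}, and it is what permits the dimension reduction in the induction. A secondary, purely bookkeeping point is that freeness of $|mh^*L|$ on $Z$ descends to freeness of $|mL|$ on $X$, which follows from $h_*\mathcal O_Z\simeq\mathcal O_X$, the projection formula, and the surjectivity of $h$. Once these ingredients are in place, the Kollár-type numerical calculation of the effective bound is identical to the klt case and yields the universal constant $m(n,a)$.
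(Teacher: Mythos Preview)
Your proposal is essentially correct and follows the paper's own route: endow $[X,K_X+\Delta]$ with a qlc structure via Theorem \ref{main2}, then run the Koll\'ar-type argument of \cite{fujino-eff} with Theorem \ref{vani-thm3} supplying the Nadel-type vanishing. The paper's sketch is phrased slightly more concretely---it says to literally replace the log resolution in \cite[2.1.1]{fujino-eff} by the quasi-log resolution $h:Z\to X$ and to invoke the vanishing theorem for simple normal crossing pairs on $Z$---whereas you work more abstractly inside the qlc formalism on $X$; these are two packagings of the same computation.

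Two small corrections. First, your aside about ``freeness of $|mh^*L|$ on $Z$ descends to freeness of $|mL|$ on $X$'' mislocates the logic: one never proves freeness on $Z$. The morphism $h$ is used only to manufacture the vanishing inputs (on $X$, via $h_*$); the section-producing and base-locus-shrinking steps all take place on $X$. Second, the restriction step via Theorem \ref{thm-adj} yields a qlc pair $[X',(K_X+\Delta)|_{X'}]$, not a semi log canonical pair, so the induction has to be formulated for that class (this is exactly why the paper says the arguments of \cite{fujino-eff}, which already work with quasi-log resolutions and snc vanishing, go through ``with suitable modifications'').
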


\begin{rem}
We can take $m(n, a)=2^{n+1}(n+1)!(\lceil a\rceil+n)$ in Theorem \ref{63}. For the 
details, see \cite{fujino-eff}. 
\end{rem}

We give a remark on \cite{fujino-eff}. 

\begin{rem} 
In this remark, we use the same notation as in \cite[2.1.1]{fujino-eff}. 
By the vanishing theorem \cite[Theorem 3.2 (b)]{fujino-eff}, we have 
$$
h^i(S, h_*\mathcal O_Y(N'-F))=h^i(S, h_*\mathcal O_Y(N'))=0
$$ 
for all $i>0$. This implies that 
$$
h^i(S, h_*\mathcal O_F(N'))=0
$$ 
for all $i>0$. Therefore, we do not need the vanishing theorem \cite[Theorem 3.2 (b)]{fujino-eff} for a simple 
normal crossing variety $F$. The vanishing theorem for $Y$ is sufficient. Note that 
$Y$ is a smooth variety. 
The vanishing theorem \cite[Theorem 3.2 (b)]{fujino-eff} is much simpler for 
smooth varieties than for simple normal crossing varieties. 
\end{rem}

\begin{proof}[Sketch of the proof of {\em{Theorem \ref{63}}}] 
The arguments in \cite{fujino-eff} work for our situation by suitable modifications. 
We use a quasi-log resolution constructed in Theorem \ref{main2} instead of taking a resolution of singularities 
(cf.~\cite[2.1.1]{fujino-eff}). 
We also use the vanishing theorem for simple normal crossing pairs (see, for example, 
\cite[Theorem 2.39]{book} or \cite[Theorem 1.1]{fujino-vanishing}) and Theorem \ref{vani-thm3}. 
All the other modifications we need are more or less routine works. 
We leave the details for the reader's exercise.  
\end{proof}

\subsection{Shokurov's polytope}
Let us introduce the notion of Shokurov's polytope for semi log canonical pairs. 
It is useful for reducing the problems for $\mathbb R$-divisors to ones for $\mathbb Q$-divisors. 

\begin{say}[Shokurov's polytope]\label{say-sho} 
Let $X$ be an equidimensional algebraic variety which satisfies Serre's $S_2$ condition and is normal crossing 
in codimension one. Let $B$ be a reduced Weil divisor on $X$ whose support does not contain any irreducible 
components of the conductor of $X$. Let $B=\sum _iB_i$ be the irreducible decomposition. 
We define a finite-dimensional $\mathbb R$-vector space $V=\bigoplus _i \mathbb R B_i$. 
Then it is easy to see that 
$$
\mathcal L=\{D\in V\, |\, (X, D) \ \text{is semi log canonical}\}
$$ 
is a rational polytope in $V$. 
Let $\pi:X\to S$ be a projective morphism onto an algebraic variety $S$. 
We can also check that 
$$
\mathcal N=\{ D\in \mathcal L\, | \, K_X+D\ \text{is $\pi$-nef} \}
$$
is a rational polytope (see, for example, 
the proof of \cite[Proposition 3.2]{birkar}). 
A key point is the boundedness of lengths of extremal rays in Theorem \ref{cone-contraction} (1). 
We note that $\mathcal N$ is known as Shokurov's polytope when $X$ is normal. 
Assume that $\Delta$ is an $\mathbb R$-divisor on $X$ such that 
$\Supp \Delta\subset \Supp B$, $(X, \Delta)$ is semi log canonical, and 
$K_X+\Delta$ is $\pi$-nef. 
Then $\Delta\in \mathcal N$. 
In this case, 
we can write 
$$
K_X+\Delta=\sum _{i=1}^k r_i (K_X+D_i)
$$ 
such that 
\begin{itemize}
\item[(i)] $D_i\in \mathcal N$ is an effective $\mathbb Q$-divisor on $X$ for every $i$, 
\item[(ii)] $(X, D_i)$ is semi log canonical for every $i$, and 
\item[(iii)] $0<r_i<1$, $r_i\in \mathbb R$ for every $i$, and $\sum _{i=1}^kr_i=1$. 
\end{itemize}
If $\Delta$ is contained in a face $\mathcal F$ of $\mathcal N$, then 
we can choose $D_i$ such that $D_i\in \mathcal F$ for every $i$. 
Moreover, we can make $D_i$ arbitrarily close to $\Delta$ in a given norm on $V$ for every 
$i$. 
\end{say}

The abundance conjecture is one of the most important conjectures in the 
minimal model theory. 

\begin{conj}[(Log) abundance conjecture]\label{abun-conj}
Let $(X, \Delta)$ be a semi log canonical pair and let $\pi:X\to $S be a projective 
morphism. Suppose that $K_X+\Delta$ is $\pi$-nef. Then $K_X+\Delta$ is $\pi$-semi-ample. 
\end{conj}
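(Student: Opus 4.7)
The plan is to reduce the semi log canonical abundance to the log canonical case via the normalization $\nu:X^\nu\to X$, and then to descend the resulting semi-ample fibration by a gluing argument. Since log canonical abundance is itself open in general, this should be read as a reduction strategy, conditional on log abundance for log canonical pairs; that conditioning will be the main obstacle overall.

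First, using Shokurov's polytope from \ref{say-sho}, I would write
\[
K_X+\Delta=\sum_{i=1}^k r_i(K_X+D_i)
\]
as a positive $\mathbb{R}$-linear combination in which each $(X,D_i)$ is semi log canonical with $D_i$ a $\mathbb{Q}$-divisor and $K_X+D_i$ is $\pi$-nef, with the $D_i$ chosen on the minimal face of $\mathcal N$ containing $\Delta$ and arbitrarily close to $\Delta$. A standard argument going back to Birkar, together with the boundedness of extremal rays provided by Theorem~\ref{cone-contraction}, then shows it is enough to prove $\pi$-semi-ampleness when $\Delta$ is a $\mathbb{Q}$-divisor.

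Second, set $K_{X^\nu}+\Theta=\nu^*(K_X+\Delta)$, so that $(X^\nu,\Theta)$ is log canonical with $\mathbb{Q}$-boundary and $K_{X^\nu}+\Theta$ is $(\pi\circ\nu)$-nef. Granting log canonical abundance for this pair, there exist a projective morphism $g:X^\nu\to Y$ over $S$ with $g_*\mathcal{O}_{X^\nu}\simeq\mathcal{O}_Y$ and a $\pi$-ample $\mathbb{Q}$-divisor $H$ on $Y$ such that $K_{X^\nu}+\Theta\sim_{\mathbb{Q}}g^*H$.

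The hard part is then to descend $g$ along $\nu$. The normalization identifies the two preimages of the conductor $\mathcal C_X$ via an involution $\tau$ on the normalization of $\mathcal C_{X^\nu}$, and one must show that $g$ intertwines $\tau$ with itself. By the adjunction formula in Theorem~\ref{thm-adj}, the different of $(X^\nu,\Theta)$ along $\mathcal C_{X^\nu}$ inherits a log canonical structure whose log canonical divisor is $(K_{X^\nu}+\Theta)|_{\mathcal C_{X^\nu}}$; since this class is $\tau$-invariant, $g$ is $\tau$-compatible on an inductive basis. One then invokes Koll\'ar's gluing theory (cf.~\cite[Section 9.4]{kollar-book}) to produce the geometric quotient $\bar Y$ of $Y$ by the equivalence relation generated by $\tau$-compatibility of fibers, yielding a morphism $\bar g:X\to\bar Y$ over $S$ realizing $K_X+\Delta$ as the pullback of an ample $\mathbb{Q}$-divisor. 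Checking that $\bar Y$ exists as a scheme rather than merely as an algebraic space is the subtlest step; this uses the finiteness of $B$-birational automorphisms from Theorem~\ref{thm118} applied fibrewise and inductively on dimension, and is precisely the circle of ideas developed in \cite{fujino-abun}, \cite{fujino-gongyo}, and \cite{hacon-xu}.
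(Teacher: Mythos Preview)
This statement is a \emph{conjecture} in the paper, not a theorem; the paper offers no proof. What the paper does provide, in the paragraph immediately following Conjecture~\ref{abun-conj}, is exactly the reduction you outline: use Shokurov's polytope \ref{say-sho} to pass to a $\mathbb{Q}$-boundary, and then cite \cite{fujino-gongyo} and \cite{hacon-xu} for the reduction from the semi log canonical to the log canonical (normal) case via gluing. Your proposal reproduces that sketch faithfully and, appropriately, you flag that the argument is conditional on log canonical abundance, which remains open in general. So there is no discrepancy to report: your strategy and the paper's remarks agree, and neither constitutes an unconditional proof.

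Two minor labeling issues. First, your citation of \cite[Section~9.4]{kollar-book} points to semi-log-resolution (as used in the proof of Theorem~\ref{main2}), not to Koll\'ar's quotient-by-finite-equivalence-relation machinery; the gluing theory you need lives elsewhere in \cite{kollar-book}. Second, Theorem~\ref{thm-adj} in this paper concerns the induced quasi-log structure on a union of slc strata, not adjunction with the different along the conductor; the input you actually want is the classical different formula together with the finiteness and compatibility results underlying \cite{fujino-gongyo} and \cite{hacon-xu}. These are citation slips, not mathematical gaps in your strategy.
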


By the arguments in \ref{say-sho}, we may assume that $\Delta$ is a $\mathbb Q$-divisor on $X$. 
This reduction seems to be very important because 
we do not know how to use the gluing 
arguments for $\mathbb R$-divisors (cf.~\cite{fujino-abun}, 
\cite{fujino-gongyo}, 
\cite{hacon-xu}). 
We note that if $\Delta$ is a $\mathbb Q$-divisor then 
Conjecture \ref{abun-conj} can be reduced to the case when $(X, \Delta)$ is 
log canonical, that is, $X$ is normal (cf.~\cite{fujino-gongyo}, \cite{hacon-xu}). 

From now on, we treat the two extreme cases of Conjecture \ref{abun-conj}. 

\begin{thm}[Numerically trivial case]
Let $(X, \Delta)$ be a semi log canonical pair and let $\pi:X\to S$ be 
a projective morphism onto an algebraic variety $S$. 
Assume that $K_X+\Delta$ is numerically trivial over $S$. Then 
$K_X+\Delta$ is $\pi$-semi-ample. 
\end{thm}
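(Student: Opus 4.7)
The plan is to combine the Shokurov polytope reduction introduced in \ref{say-sho} with known results on log abundance for log canonical pairs in the numerically trivial case, together with the gluing/descent techniques of \cite{fujino-gongyo} and \cite{hacon-xu}. Since $K_X+\Delta\equiv_S 0$, in particular it is $\pi$-nef, so the hypotheses of the polytope construction are met.

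First I would reduce to the case when $\Delta$ is a $\mathbb{Q}$-divisor. Take a reduced Weil divisor $B$ containing $\Supp \Delta$ and work in the finite-dimensional $\mathbb{R}$-vector space $V=\bigoplus_i\mathbb{R} B_i$. The slc polytope $\mathcal{L}$ and the nef polytope $\mathcal{N}\subset\mathcal{L}$ are rational, and the subset
\[
\mathcal{N}_0=\{D\in\mathcal{N}\mid K_X+D\equiv_S 0\}
\]
is a rational subpolytope of $\mathcal{N}$, because numerical triviality over $S$ is cut out by a finite collection of linear equations $(K_X+D)\cdot C_j=0$ with rational coefficients (here one uses the boundedness of lengths of extremal rays in Theorem \ref{cone-contraction}(1) to ensure finitely many equations suffice). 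Since $\Delta\in\mathcal{N}_0$, I can write $\Delta=\sum_{i=1}^k r_i D_i$ with $D_i\in\mathcal{N}_0$ a $\mathbb{Q}$-divisor, $r_i\in\mathbb{R}_{>0}$ and $\sum r_i=1$; each $(X,D_i)$ is slc with $K_X+D_i$ numerically trivial over $S$. If I can establish $K_X+D_i\sim_{\mathbb{Q},\pi}0$ for every $i$, then $K_X+\Delta=\sum r_i(K_X+D_i)\sim_{\mathbb{R},\pi}0$, which is certainly $\pi$-semi-ample.

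Second, having reduced to the $\mathbb{Q}$-divisor case, I would pass to the normalization $\nu:X^\nu\to X$ and set $K_{X^\nu}+\Theta=\nu^*(K_X+\Delta)$. Then $(X^\nu,\Theta)$ is log canonical with $K_{X^\nu}+\Theta\equiv_S 0$ and $\Theta$ a $\mathbb{Q}$-divisor. The numerically trivial case of log abundance for log canonical pairs (due to Gongyo, building on Kawamata, Nakayama, and Ambro) yields $K_{X^\nu}+\Theta\sim_{\mathbb{Q},\pi\circ\nu}0$.

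Finally, I would invoke the gluing/descent machinery of \cite{fujino-gongyo} and \cite{hacon-xu} to show that $\mathbb{Q}$-linear triviality over $S$ descends from $X^\nu$ back to $X$, yielding $K_X+\Delta\sim_{\mathbb{Q},\pi}0$ and in particular $\pi$-semi-ampleness. The main obstacle is this last descent step: one must verify that a $\mathbb{Q}$-section of $m(K_{X^\nu}+\Theta)$ is compatible with the involution on the conductor induced by the slc gluing data, so that it descends to a section of $m(K_X+\Delta)$ on $X$. In the numerically trivial setting this compatibility reduces to checking that the induced $B$-birational involution on the different preserves a $\mathbb{Q}$-linear equivalence class, which is controlled by the finiteness of $B$-birational automorphisms of the relevant lc pairs (compare Theorem \ref{thm118}); once this is granted, the rest of the proof is formal.
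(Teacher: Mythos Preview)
Your overall strategy coincides with the paper's: reduce to the $\mathbb Q$-divisor case via the Shokurov polytope \ref{say-sho}, and then invoke \cite{fujino-gongyo} for the slc $\mathbb Q$-divisor case. The paper simply cites \cite[Corollary 4.11]{fujino-gongyo} for the latter, whereas you sketch its proof (normalize, apply abundance for lc pairs in the numerically trivial case, then descend); that is fine in spirit.

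Two points deserve correction. First, your reduction step is more circuitous than necessary and carries a misattributed justification: the boundedness of extremal ray lengths from Theorem \ref{cone-contraction}(1) is what makes $\mathcal N$ a rational polytope, not what makes $\mathcal N_0$ one. The paper avoids introducing $\mathcal N_0$ altogether by the following observation: once $K_X+\Delta=\sum r_i(K_X+D_i)$ with each $K_X+D_i$ $\pi$-nef, $r_i>0$, and $K_X+\Delta\equiv_\pi 0$, intersecting with any curve $C$ over $S$ gives $\sum r_i(K_X+D_i)\cdot C=0$ with every summand $\geq 0$, hence each $K_X+D_i\equiv_\pi 0$. This immediately reduces to the $\mathbb Q$-case without any further rationality analysis. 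Second, your appeal to Theorem \ref{thm118} in the descent step is misplaced: that theorem concerns finiteness of $\Bir(X,\Delta)$ when $K_X+\Delta$ is \emph{big}, which is the opposite regime from the numerically trivial one. The finiteness input actually used in \cite{fujino-gongyo} for the descent is the finiteness of the $B$-pluricanonical representation (of log canonical pairs with numerically trivial log canonical class), not Theorem \ref{thm118}.
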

\begin{proof}
We set $B=\lceil \Delta\rceil$ and apply the arguments in \ref{say-sho}. 
Then we can write 
$$
K_X+\Delta=\sum _{i=1}^k r_i (K_X+D_i)
$$ 
as in \ref{say-sho}. 
Since $K_X+\Delta$ is numerically $\pi$-trivial and 
$K_X+D_i$ is $\pi$-nef for every $i$, $K_X+D_i$ is numerically $\pi$-trivial for 
every $i$. 
Therefore, we can reduce the problem to the case when $\Delta$ is a $\mathbb Q$-divisor. 
If $\Delta$ is a $\mathbb Q$-divisor, then the statement is nothing but 
\cite[Corollary 4.11]{fujino-gongyo} (see also \cite[Subsection 4.1]{fujino-gongyo}). 
Therefore, $K_X+\Delta$ is $\pi$-semi-ample. 
\end{proof}

\begin{thm}[Nef and log big case]
Let $(X, \Delta)$ be a semi log canonical pair and let 
$\pi:X\to S$ be a projective morphism onto an algebraic 
variety $S$. 
Assume that $K_X+\Delta$ is nef and log big over $S$ with respect to $(X, \Delta)$. 
Then $K_X+\Delta$ is $\pi$-semi-ample. 
\end{thm}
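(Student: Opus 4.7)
The plan is to reduce the assertion to the $\mathbb Q$-divisor case by means of Shokurov's polytope introduced in \ref{say-sho}, and then to apply the base point free theorem of Reid--Fukuda type (Theorem \ref{bpf2}).

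First, I would set $B=\lceil \Delta\rceil$ (or any reduced Weil divisor whose support contains $\Supp \Delta$ and no irreducible component of the conductor of $X$) and consider Shokurov's polytope $\mathcal N\subset V=\bigoplus_i \mathbb R B_i$ as in \ref{say-sho}. Let $\mathcal F\subset \mathcal N$ be the minimal face of $\mathcal N$ containing $\Delta$. The decomposition statement in \ref{say-sho} then furnishes
$$
K_X+\Delta=\sum_{i=1}^{k} r_i(K_X+D_i),
$$
with $r_i\in \mathbb R_{>0}$, $\sum r_i=1$, each $(X,D_i)$ semi log canonical with $D_i\in \mathcal F$ an effective $\mathbb Q$-divisor, each $K_X+D_i$ $\pi$-nef, and the $D_i$ chosen arbitrarily close to $\Delta$ in a fixed norm on $V$.

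The core step is to upgrade the $\pi$-nefness of each $K_X+D_i$ to nef-and-log-bigness with respect to $(X,D_i)$. Because $D_i$ lies in the minimal face $\mathcal F$ containing $\Delta$, every element of $\mathcal F$ has the same components with coefficient one, so the slc stratification of $(X,D_i)$ agrees with that of $(X,\Delta)$. On each slc stratum $W$, $(K_X+\Delta)|_W$ is $\pi$-big by hypothesis; since bigness is an open condition in the relative N\'eron--Severi space on the proper variety $W$ over $S$, by taking $D_i$ sufficiently close to $\Delta$ we may arrange that $(K_X+D_i)|_W$ remains $\pi$-big on every such $W$. Hence $K_X+D_i$ is nef and log big over $S$ with respect to $(X,D_i)$ for every $i$.

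Finally, for each $i$ I would fix a positive integer $m_i$ such that $m_i(K_X+D_i)$ is Cartier and apply Theorem \ref{bpf2} to $(X,D_i)$ with $D=m_i(K_X+D_i)$ and $a=2/m_i$; then $aD-(K_X+D_i)=K_X+D_i$ is nef and log big over $S$ with respect to $(X,D_i)$, so the theorem yields that $m_i(K_X+D_i)$ is $\pi$-semi-ample, whence so is $K_X+D_i$. Since $K_X+\Delta=\sum_i r_i(K_X+D_i)$ is a positive $\mathbb R$-linear combination of $\pi$-semi-ample $\mathbb Q$-Cartier $\mathbb Q$-divisors, it is $\pi$-semi-ample as an $\mathbb R$-Cartier $\mathbb R$-divisor. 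The main obstacle in this plan is the perturbation step of the third paragraph: one must verify that the slc stratification is constant along $\mathcal F$ (a combinatorial statement about which defining inequalities of $\mathcal N$ are saturated) and that relative bigness of the restriction to each, possibly non-normal, slc stratum $W$ survives a small perturbation inside $\mathcal F$. Both points are essentially standard once set up carefully, but require some care in the non-normal setting.
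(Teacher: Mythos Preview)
Your proof is correct and follows essentially the same route as the paper: reduce to the $\mathbb Q$-divisor case via Shokurov's polytope (\ref{say-sho}) and then apply Theorem \ref{bpf2}. The paper's argument is slightly terser on the perturbation step: it first notes that $K_X+D_i$ is nef and log big over $S$ with respect to $(X,\Delta)$ because bigness is an open condition, and then observes that this implies nef and log big with respect to $(X,D_i)$ since, for $D_i$ sufficiently close to $\Delta$ inside $\mathcal L$, the slc strata of $(X,D_i)$ form a \emph{subset} of those of $(X,\Delta)$. Your minimal-face argument aims instead for \emph{equality} of the stratifications; note, however, that ``same components with coefficient one'' is not by itself enough to conclude this---what you really need is that all discrepancy constraints $a(E,X^\nu,\Theta)=-1$ tight at $\Delta$ remain tight on the relative interior of $\mathcal F$, which is indeed the case. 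Either way the conclusion follows, and the remainder of your proof (applying Theorem \ref{bpf2} to each $(X,D_i)$ with $D=m_i(K_X+D_i)$, then taking the convex combination) matches the paper.
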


\begin{proof}
We set $B=\lceil \Delta\rceil$ and apply the arguments in \ref{say-sho}. 
Then we can write 
$$
K_X+\Delta=\sum _{i=1}^k r_i (K_X+D_i)
$$ 
as in \ref{say-sho}. 
If $D_i$ is sufficiently close to $\Delta$, 
then $K_X+D_i$ is nef and log big over $S$ with respect to $(X, \Delta)$. 
This is because the bigness is an open condition. 
It is easy to see that $K_X+D_i$ is nef and log big over $S$ with respect to $(X, D_i)$ if $D_i$ is sufficiently 
close to $\Delta$. 
Therefore, we may assume that $\Delta$ is a $\mathbb Q$-divisor. 
In this case, we can check that $K_X+\Delta$ is semi-ample over $S$ by 
Theorem \ref{bpf2}. 
\end{proof}

\subsection{Depth of sheaves on slc pairs} 
The following theorem is an $\mathbb R$-divisor version of Koll\'ar's 
result (see \cite[Theorem 3]{kollar-local}), which is 
a generalization of \cite[Lemma 3.2]{alexeev-limits} and \cite[Theorem 4.21]{book}. 
It can be proved by the method of two spectral sequences of local 
cohomology groups (cf.~\cite[4.2.1 Appendix and 
Section 4.3]{book}). 
For the details and some interesting examples, see \cite{kollar-local}. 
For some related topics, see \cite{kovacs-irr} and \cite{alexeev-hacon}. 

\begin{thm}\label{thm-depth} 
Let $(X, \Delta)$ be a semi log canonical pair and let $x\in X$ be a scheme theoretic point. 
Assume that $x$ is not the generic point of any slc center of $(X, \Delta)$. 
Then we have the following properties. 

\begin{itemize}
\item[(1)] Let $D$ be a Weil divisor on $X$ 
whose support does not contain any irreducible components of the 
conductor of $X$. 
Let $\Delta'$ be an effective $\mathbb R$-divisor on $X$ such that 
$\Delta'\leq \Delta$ and that $D\sim _{\mathbb R, loc}\Delta'$, that is, 
$D$ is locally $\mathbb R$-linearly equivalent to $\Delta'$. Then 
$$
\mathrm{depth}_x\mathcal O_X(-D)\geq \min\{3, \mathrm{codim}_{X} x\}. 
$$
\item[(2)] Let $X'$ be any reduced closed subscheme of $X$ that is 
a union of some slc centers of $(X, \Delta)$. 
Then 
$$
\mathrm{depth}_x \mathcal I_{X'}\geq \min\{3, 1+\mathrm{codim}_{X'} x\}, 
$$ 
where $\mathcal I_{X'}$ is the defining ideal sheaf of $X'$ on $X$. 
\end{itemize}
\end{thm}
\begin{proof}[Sketch of the proof of {\em{Theorem \ref{thm-depth}}}] 
First, we consider (2). The proof of Theorem 3 (2) in \cite{kollar-local} works 
without any changes. Next we consider (1). 
Since the problem is local, we may assume that $X$ is affine and $D\sim _{\mathbb R}\Delta'$. 
By considering the real vector space spanned by the irreducible 
components of $\Supp \Delta$ and perturbing $\Delta$ and 
$\Delta'$, we can find effective $\mathbb Q$-divisors 
$\Delta'_0$ and $\Delta_0$ on $X$ such that $\Delta'_0\leq \Delta_0$, 
$D\sim _{\mathbb Q}\Delta'_0$, $(X, \Delta_0)$ is semi log canonical, and $x$ is not the 
generic point of any slc center of $(X, \Delta_0)$. 
Therefore, by Theorem 3 (1) in \cite{kollar-local}, 
we obtain the desired inequality. 
\end{proof}

\subsection{Slc morphisms} 
In this subsection, we introduce the notion of {\em{slc morphisms}} and 
prove some basic properties. 

\begin{defn}[Slc morphisms]\label{slc-mor}
Let $(X, \Delta)$ be a semi log canonical pair and let $f:X\to C$ be a flat morphism 
onto a smooth curve $C$. 
We say that $f:(X, \Delta)\to C$ is {\em{semi log canonical}} ({\em{slc}}, for short) if 
$(X, \Delta+f^*c)$ is semi log canonical for every closed point $c\in C$.  
\end{defn}

The following lemma is almost obvious by the definition of slc morphisms. 
See \cite[Lemma 7.2]{km}. 
\begin{lem}\label{lem69}
Assume that $f:(X, \Delta)\to C$ is slc. Then we have the following properties. 
\begin{itemize}
\item[(1)] Every fiber of $f$ is reduced. 
\item[(2)] $\Delta$ is horizontal, that is, none of the irreducible components of 
$\Delta$ is contained in a fiber of $f$. 
\item[(3)] If $E$ is an exceptional divisor over $X$ such that 
the center $c_X(E)$ is contained in a fiber, then $a(E, X, \Delta)\geq 0$. 
\end{itemize}
\end{lem}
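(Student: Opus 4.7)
The plan is to derive all three statements from the single hypothesis that $(X,\Delta+f^*c)$ is slc for every closed point $c\in C$, by bookkeeping coefficients on the normalization and computing discrepancies. Since $C$ is a smooth curve, each closed point $c$ gives a Cartier divisor, and the flatness of $f$ makes $f^*c$ a well-defined Cartier divisor whose support is exactly the scheme-theoretic fiber $f^{-1}(c)$. Write $\nu:X^\nu\to X$ for the normalization and $K_{X^\nu}+\Theta=\nu^*(K_X+\Delta)$, where $\Theta=\nu_*^{-1}\Delta+\mathcal C^\nu$ with $\mathcal C^\nu$ the conductor divisor on $X^\nu$; in particular $\Theta$ is effective. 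By Definition \ref{slc-pairs}, slc of $(X,\Delta+f^*c)$ is equivalent to log canonicity of the pair $(X^\nu,\Theta+\nu^*(f^*c))$.

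For (1), I would argue by contradiction. Suppose some closed fiber $f^{-1}(c)$ is non-reduced, so there is a prime component $D\subset X$ of the fiber with multiplicity $m\ge 2$ in $f^*c$. Pulling back to $X^\nu$, any prime $D'\subset X^\nu$ dominating $D$ appears in $\nu^*(f^*c)=(f\circ\nu)^*c$ with coefficient $m$. Since $\Theta\ge 0$, the coefficient of $D'$ in $\Theta+\nu^*(f^*c)$ is at least $m\ge 2>1$, contradicting log canonicity of $(X^\nu,\Theta+\nu^*(f^*c))$.

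For (2), suppose some prime component $D$ of $\Delta$ lies in a fiber $f^{-1}(c)$. Since $D$ has codimension one and is not contained in the conductor (by the definition of slc pair), the strict transform $D'=\nu_*^{-1}D$ is a component of $\Theta$ with coefficient $\mathrm{coeff}_D(\Delta)>0$, and $D$ appears in $f^*c$ with multiplicity at least $1$, so in $\nu^*(f^*c)$ with multiplicity at least $1$. Thus $\mathrm{coeff}_{D'}(\Theta+\nu^*(f^*c))\ge \mathrm{coeff}_D(\Delta)+1>1$, contradicting log canonicity. For (3), let $E$ be an exceptional divisor over $X$ whose center $c_X(E)$ lies in a fiber $f^{-1}(c)$. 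Since $f\circ\nu$ factors through the log resolution extracting $E$ and sends $E$ into $\{c\}$, we have $\mathrm{ord}_E(f^*c)\ge 1$. By the discrepancy relation
\[
a(E,X,\Delta+f^*c)=a(E,X,\Delta)-\mathrm{ord}_E(f^*c),
\]
combined with the slc bound $a(E,X,\Delta+f^*c)\ge -1$ coming from the hypothesis, we get $a(E,X,\Delta)\ge -1+\mathrm{ord}_E(f^*c)\ge 0$.

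The only mildly subtle point—hardly an obstacle—is verifying in (1) and (2) that the coefficient on the normalization is correctly read off, including the case when the troublesome fiber component happens to lie on the conductor; in that case the coefficient in $\Theta$ only increases (by the conductor contribution), so the contradiction is reinforced rather than weakened. Everything else is the routine bookkeeping already present in the proof of \cite[Lemma 7.2]{km} in the normal setting, adapted through $\nu$.
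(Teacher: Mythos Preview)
Your argument is correct and is exactly what the paper intends: the paper gives no proof beyond the remark that the lemma is ``almost obvious by the definition of slc morphisms'' together with a reference to \cite[Lemma~7.2]{km}, and your proof is precisely the natural adaptation of that argument to the slc setting via the normalization. Your closing caveat about a fiber component lying on the conductor is unnecessary, since the hypothesis that $(X,\Delta+f^*c)$ is slc already forces $\Supp(\Delta+f^*c)$ to avoid all conductor components (Definition~\ref{slc-pairs}).
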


By the same arguments as in the proof of \cite[Lemma 7.6]{km}, we 
know that the notion of slc morphisms is stable under base changes. 

\begin{lem}
Assume that $f:(X, \Delta)\to C$ is slc. 
Let $g:C'\to C$ be a non-constant morphism from a smooth curve 
$C'$, $X'=X\times _CC'$ with 
projections $h:X'\to X$ and $f':X'\to C'$. 
We set $K_{X'}+\Delta'=h^*(K_X+\Delta)$. 
Then $f':(X', \Delta')\to C'$ is also slc. 
\end{lem}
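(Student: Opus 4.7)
The plan is to verify directly the defining conditions of Definition \ref{slc-mor} for $f':(X',\Delta')\to C'$, namely that $(X',\Delta'+(f')^*c')$ is semi log canonical for every closed point $c'\in C'$. The verification splits into confirming the ambient geometry of $X'$ and confirming the log canonical property on the normalization. Working \'etale-locally near each closed point $c'\in C'$, I can assume that $g$ has the standard form $t\mapsto t^e$ for some $e\geq 1$, which will be convenient when handling ramification.

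For the ambient geometry of $X'$, since $f$ is flat with reduced fibers by Lemma \ref{lem69}(1), and $g$ is flat (being a non-constant morphism from a smooth curve), the base change $f':X'\to C'$ is flat with reduced fibers. Standard commutative algebra (flatness over a regular one-dimensional base combined with reduced fibers) gives that $X'$ is equi-dimensional and satisfies Serre's $S_2$ condition, and the normal-crossing-in-codimension-one property is local, open, and preserved under the flat base change $h:X'\to X$. Since $\Delta$ is horizontal by Lemma \ref{lem69}(2), the pullback $\Delta'$ is a well-defined Weil divisor whose support avoids the components of the conductor of $X'$, and by construction $K_{X'}+\Delta'=h^*(K_X+\Delta)$ is $\mathbb{R}$-Cartier.

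For the slc condition on $(X',\Delta'+(f')^*c')$, I pass to the normalization $\nu:X^\nu\to X$. Writing $K_{X^\nu}+\Theta=\nu^*(K_X+\Delta)$, the slc hypothesis on $f$ gives that $(X^\nu,\Theta+(f\circ\nu)^*c)$ is log canonical for every closed point $c\in C$. Since $X^\nu$ is normal and dominates the smooth curve $C$ with equi-dimensional fibers, $f\circ\nu:X^\nu\to C$ is flat, so the log canonical analogue of the present lemma (namely \cite[Lemma 7.6]{km}) applies and yields that the base change $(X^\nu\times_C C',\Theta^\nu)$ is lc over $C'$ in the analogous fiberwise sense. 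I would then descend this lc property to the normalization of $X'$ to conclude.

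The main obstacle is that $X^\nu\times_C C'$ need not be normal when $g$ is ramified, so it is not literally the normalization of $X'$. To handle this, in the local model $t\mapsto t^e$ I would argue that because the fibers of $f$ are reduced, the ramification locus of $g$ pulls back to a reduced divisor on $X$ which coincides with $f^*c$ and whose pullback to $X^\nu$ does not share any component with $\Theta$. Consequently the pullback of $(X^\nu,\Theta+(f\circ\nu)^*c)$ along the cyclic base change remains sub log canonical, and taking the normalization of the cyclic cover produces exactly the normalization of $X'$, with the extra contribution from ramification matching $(f')^*c'$ by the Riemann--Hurwitz bookkeeping. This identifies the lc structure on the normalization of $X'$ with the semi log canonical structure of $(X',\Delta'+(f')^*c')$ and completes the verification.
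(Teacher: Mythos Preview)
The paper does not give a proof; it simply notes that the argument of \cite[Lemma~7.6]{km} carries over verbatim to the semi log canonical setting. Your approach is different: rather than rerunning that argument with ``slc'' in place of ``lc'', you reduce to the lc statement by passing to the normalization and then invoking \cite[Lemma~7.6]{km} as a black box. This is a legitimate alternative and your outline is essentially correct.

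One simplification is available, however: the ``main obstacle'' you identify does not actually arise. Since $(X,\Delta+f^*c)$ is slc for every $c$, no component of $f^*c$ lies in the conductor of $X$, and hence $(f\circ\nu)^*c=\nu^*(f^*c)$ is a reduced Cartier divisor on $X^\nu$. Thus $f\circ\nu:X^\nu\to C$ already has reduced fibers. In your local model $t\mapsto t^e$ the base change $X^\nu\times_C C'$ is then locally $\Spec A[s]/(s^e-t)$ with $A$ normal and $t$ cutting out a reduced divisor; such a ring is itself normal (it is free over $A$, hence $S_2$, and at any height-one prime containing $t$ the element $s$ is a uniformizer). Consequently $X^\nu\times_C C'$ \emph{is} the normalization of $X'$, and no further normalization or Riemann--Hurwitz bookkeeping is needed: applying \cite[Lemma~7.6]{km} to the lc morphism $(X^\nu,\Theta)\to C$ directly gives the lc property on $(X')^\nu$ that you want.

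A minor point: your assertion that the normal-crossing-in-codimension-one property is preserved under the flat base change $h$ deserves a word, since $h$ is not \'etale over the ramification locus of $g$. The reason it holds is that the conductor of $X$ is purely horizontal over $C$ (again because no fiber component lies in it), so at a generic point of the double locus one may take the coordinate on $C$ to be one of the transverse coordinates $y_i$ in the local model of Definition~\ref{def-snc}; the cyclic base change then visibly preserves the normal crossing form there, and any new singularities created by ramification lie in codimension at least two.
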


The following theorem is the main result of this subsection. 
It is a consequence of Theorem \ref{thm-torsion}. 

\begin{thm}
Let $f:X\to C$ be a projective semi log canonical 
morphism. 
Then $R^if_*\mathcal O_X(K_X)$ is locally free for every $i$. 
Therefore, for every $i$, 
we obtain that $R^if_*\mathcal O_X(K_{X/C})$ is locally free and that 
$$R^if_*\mathcal O_X(K_{X/C})\otimes \mathbb C(c)
\simeq H^i(X_c, \mathcal O_{X_c}(K_{X_c}))$$ for all $c\in C$, 
where $X_c=f^{-1}(c)$. 
In particular, $\dim _{\mathbb C} H^i(X_c, \mathcal O_{X_c}(K_{X_c}))$ is independent of $c\in C$. 
\end{thm}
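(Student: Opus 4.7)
My plan is to combine the torsion-free theorem (Theorem \ref{thm-torsion}) with the slc morphism hypothesis to force the higher direct images to be torsion-free on the smooth curve $C$, invoke local freeness of torsion-free coherent sheaves on curves, and conclude the fiber identification by Grauert's theorem.

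First I would check that every slc stratum of $(X, \Delta)$ is dominant onto $C$. Flatness of $f$ forces each irreducible component of $X$ to dominate $C$. For the slc centers, Lemma \ref{lem69}(3) tells us that any exceptional divisor $E$ over $X^\nu$ whose center lies in a fiber of $f$ has $a(E, X^\nu, \Theta) \geq 0$, so $E$ cannot compute an slc center; hence every slc center of $(X, \Delta)$ also dominates $C$. Next I would apply Theorem \ref{thm-torsion} to $f : X \to C$ with $D = K_X$ (represented by a Weil divisor whose support avoids the conductor) and deduce that every associated prime of $R^i f_* \mathcal{O}_X(K_X)$ is the generic point of the $f$-image of some slc stratum; by the previous step this forces the associated prime to be the generic point of $C$, so $R^i f_* \mathcal{O}_X(K_X)$ is torsion-free and hence locally free on the smooth curve $C$. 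Twisting by $\omega_C^{-1}$ via the projection formula gives local freeness of $R^i f_* \mathcal{O}_X(K_{X/C})$ as well. Since all of these higher direct images are locally free, Grauert's cohomology-and-base-change theorem yields $R^i f_* \mathcal{O}_X(K_{X/C}) \otimes \mathbb{C}(c) \cong H^i(X_c, \mathcal{O}_{X_c}(K_{X_c}))$ for every $c \in C$; here I use that $X_c = f^*c$ is Cartier in $X$ (since $C$ is smooth and every fiber is reduced by Lemma \ref{lem69}(1)), so that $\omega_{X/C}|_{X_c} \cong \omega_{X_c}$ by adjunction.

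The main obstacle is the invocation of Theorem \ref{thm-torsion} in the key step, because that theorem as stated requires $D$ to be Cartier or $\mathbb{Q}$-Cartier, whereas for a general slc pair $K_X$ is only a Weil divisor, only $K_X + \Delta$ being required to be $\mathbb{R}$-Cartier. To bypass this I would go back to the quasi-log resolution $h : Z \to X$ supplied by Theorem \ref{main2}. Every stratum of the simple normal crossing pair $(Z, \Delta_Z)$ maps to an slc stratum of $(X, \Delta)$ and therefore dominates $C$ under $f \circ h$, so the Koll\'ar-type torsion-free theorem for simple normal crossing pairs (in the form used in the proof of Theorem \ref{thm-torsion}) applies and produces torsion-freeness of $R^i(f \circ h)_* \mathcal{O}_Z(K_Z + \Delta_Z^{=1} + \lceil -\Delta_Z^{<1}\rceil)$ on $C$. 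Identifying the pushforward of this sheaf along $h$ with $\omega_X$ by the same reflexivity arguments as in the non-Cartier step of the proof of Theorem \ref{thm-torsion}, and feeding the result into a Leray spectral sequence, then gives the desired torsion-freeness of $R^i f_* \omega_X$ without any $\mathbb{Q}$-Cartier hypothesis on $K_X$.
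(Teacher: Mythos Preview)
Your core argument---dominance of all slc strata via Lemma \ref{lem69}(3), then Theorem \ref{thm-torsion} to get torsion-freeness, hence local freeness on the smooth curve $C$, then cohomology and base change---is exactly the paper's proof.

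The ``main obstacle'' you raise is not actually present. The statement concerns an slc morphism $f:X\to C$ with no boundary written, i.e.\ the pair is $(X,0)$; the paper's own proof confirms this by speaking of ``every slc stratum of $X$''. With $\Delta=0$, the divisor $K_X=K_X+\Delta$ is $\mathbb{R}$-Cartier (hence $\mathbb{Q}$-Cartier, being integral) by the very definition of an slc pair, so Theorem \ref{thm-torsion} applies directly with $D=K_X$ and $H=0$. If instead you insist on reading the statement with a nonzero $\Delta$, the difficulty is worse than you diagnose: applying Theorem \ref{thm-torsion} to the pair $(X,\Delta)$ with $D=K_X$ would require $D-(K_X+\Delta)=-\Delta$ to be relatively $\mathbb{R}$-linearly equivalent to something nef and log abundant over $C$, which fails for an effective horizontal $\Delta$. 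Your quasi-log workaround inherits the same issue, since $K_Z+\Delta_Z$ pulls back $K_X+\Delta$ rather than $K_X$, and the sheaf $\mathcal{O}_Z(K_Z+\Delta_Z^{=1}+\lceil-\Delta_Z^{<1}\rceil)$ does not push forward to $\omega_X$ when $\Delta\neq 0$. So the second paragraph can simply be dropped: your first paragraph already reproduces the paper's proof.
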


\begin{proof}By Theorem \ref{thm-torsion}, $R^if_*\mathcal O_X(K_X)$ is torsion-free because 
every slc stratum of $X$ is dominant onto $C$ (see Lemma \ref{lem69} (3)). 
The other claims are obvious by the base change theorem (cf.~\cite[(4.3)]{kollar-local}). 
\end{proof}

\subsection{Finiteness of birational automorphisms}
This subsection is a supplement to \cite{fujino-gongyo}. 
Let us introduce the notion of {\em{$B$-birational maps}} for semi log canonical pairs 
(cf.~\cite{fujino-abun}, \cite{fujino-gongyo}). 
 
\begin{defn}[{cf.~\cite[Definition 3.1]{fujino-abun}, \cite[Definition 2.11]{fujino-gongyo}}]
Let $(X,\Delta)$ be a semi log canonical pair. 
We say that a proper birational map $f:(X,\Delta)\dashrightarrow (X,\Delta)$ 
is {\em {$B$-birational}} if there exists a common resolution 
\begin{equation*}
\xymatrix{ & W\ar[dl]_{\alpha} \ar[dr]^{\beta}\\
 X \ar@{-->}[rr]_{f}  & & X}
\end{equation*}
such that $$\alpha^*(K_X+\Delta)=\beta^*(K_X+\Delta). $$
This means that it holds that $E_\alpha=E_\beta$ when we write 
$$K_W=\alpha^*(K_X +\Delta)+E_\alpha$$ 
and $$K_W=\beta^*(K_X+\Delta)+E_\beta.$$ 
We define  
$$\Bir(X,\Delta)=\{f\,|\, f :(X,\Delta) 
\dashrightarrow (X,\Delta) \text{ is $B$-birational} \}. $$ 
It is obvious that $\Bir (X, \Delta)$ has a natural group structure. 
We also define 
$$
\Aut (X, \Delta)=\{f\, |\, f:X\to X \ {\text{is an isomorphism such that}} \ 
\Delta=f_*^{-1}\Delta\}. 
$$
We can easily see that $\Aut (X, \Delta)$ is a subgroup of $\Bir (X, \Delta)$. 
\end{defn}

The following theorem is the main theorem of this subsection. 
It is essentially contained in \cite[Corollary 3.13]{fujino-gongyo}.  

\begin{thm}[Finiteness of $B$-birational maps]\label{67} 
Let $(X, \Delta)$ be a complete semi log canonical 
pair such that $\Delta$ is a $\mathbb Q$-divisor. 
Assume that $K_X+\Delta$ is big, that is, 
$K_{X_i^\nu}+\Theta_i$ is big for every $i$, 
where $\nu:X^\nu\to X$ is the normalization, 
$X^\nu=\cup _i X_i^\nu$ is the irreducible 
decomposition, and $K_{X_i^\nu}+\Theta_i=\nu^*(K_X+\Delta)|_{X_i^\nu}$. 
Then $\Bir (X, \Delta)$ is a finite group. 
In particular, $\Aut (X, \Delta)$ is a finite group. 
\end{thm}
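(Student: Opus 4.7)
The plan is to reduce the problem to the analogous finiteness statement for normal log canonical pairs, which is already available in \cite{fujino-gongyo}. Let $\nu:X^\nu\to X$ be the normalization, with irreducible decomposition $X^\nu=\bigsqcup_i X_i^\nu$, and write $K_{X^\nu}+\Theta=\nu^*(K_X+\Delta)$, so each $(X_i^\nu,\Theta_i)$ is log canonical with $K_{X_i^\nu}+\Theta_i$ big by hypothesis.

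First I would construct an injective lifting homomorphism. Given $f\in\Bir(X,\Delta)$ with common resolution $\alpha,\beta:W\to X$ satisfying $\alpha^*(K_X+\Delta)=\beta^*(K_X+\Delta)$, the map $f$ is an isomorphism over a dense open $U\subset X$ containing every generic point of an irreducible component. The universal property of normalization lifts $f|_U$ uniquely to an isomorphism of open subsets of $X^\nu$, and the resulting birational self-map $\tilde f:X^\nu\dashrightarrow X^\nu$ permutes the components via some $\sigma_f$ in the finite symmetric group $\mathfrak S_n$, where $n$ is the number of components. Pulling back the equality $\alpha^*(K_X+\Delta)=\beta^*(K_X+\Delta)$ through the normalization of a common resolution of $\tilde f$ shows that each restriction $\tilde f_i:(X_i^\nu,\Theta_i)\dashrightarrow(X_{\sigma_f(i)}^\nu,\Theta_{\sigma_f(i)})$ is $B$-birational between log canonical pairs. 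Because $X$ is semi-normal (a built-in feature of slc pairs), $f$ is recovered from $\tilde f$ on the dense open where both are defined, so $f\mapsto\tilde f$ is injective.

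Next I would bound the image. The subgroup $K\subset\Bir(X,\Delta)$ on which $\sigma_f$ is trivial (so every component is mapped to itself) injects into $\prod_i\Bir(X_i^\nu,\Theta_i)$. By \cite[Corollary 3.13]{fujino-gongyo}, each factor $\Bir(X_i^\nu,\Theta_i)$ is finite because $(X_i^\nu,\Theta_i)$ is log canonical with big log canonical divisor. Hence $K$ is finite, and from the exact sequence
\[
1\to K\to\Bir(X,\Delta)\to\mathfrak S_n
\]
we conclude that $\Bir(X,\Delta)$ itself is finite. The finiteness of $\Aut(X,\Delta)$ then follows from the inclusion $\Aut(X,\Delta)\subset\Bir(X,\Delta)$.

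The main obstacle I expect is the careful verification of the lifting step, in particular the claim that the $B$-birationality equality transfers from $(X,\Delta)$ to $(X^\nu,\Theta)$ component-by-component. One must produce a common resolution upstairs out of a common resolution downstairs compatible with the normalization, and keep track of which component is sent to which under $\sigma_f$, so that the defining equality of $B$-birationality restricts correctly to each $(X_i^\nu,\Theta_i)\dashrightarrow(X_{\sigma_f(i)}^\nu,\Theta_{\sigma_f(i)})$. The rest is packaging plus the cited finiteness result in the normal case.
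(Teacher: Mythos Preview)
Your proposal is correct and ultimately rests on the same external input as the paper's proof, namely \cite[Corollary 3.13]{fujino-gongyo}. The route, however, is different in packaging. The paper observes that $\Bir(X,\Delta)$ depends only on the crepant birational class: for any resolution $f:Y\to X$ with $K_Y+\Delta_Y=f^*(K_X+\Delta)$ one has a canonical \emph{isomorphism} $\Bir(X,\Delta)\cong\Bir(Y,\Delta_Y)$, since a common resolution of a $B$-birational self-map of $X$ is automatically one for $Y$ and vice versa. The paper then invokes \cite[Corollary 3.13 and Remark 3.16]{fujino-gongyo} for the smooth projective (possibly disconnected, sub-boundary) pair $(Y,\Delta_Y)$ and is done.

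Your argument instead passes to the normalization, establishes only an \emph{injection}, and bookkeeps the permutation action on components by hand via the exact sequence $1\to K\to \Bir(X,\Delta)\to \mathfrak S_n$. This is fine, but it reproves part of what is already packaged in the birational invariance of $\Bir$ together with \cite[Remark 3.16]{fujino-gongyo}. In particular, the ``main obstacle'' you anticipate---lifting $B$-birationality to $X^\nu$---evaporates once you note that any smooth common resolution $W\to X$ factors through $X^\nu$ by the universal property of normalization, so the equality $\alpha^*(K_X+\Delta)=\beta^*(K_X+\Delta)$ pulls back verbatim. Your approach buys explicitness about the component structure; the paper's buys brevity by pushing that structure into the cited reference.
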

\begin{proof}
Let $f:Y\to X$ be a resolution such that 
$Y$ is projective, $K_Y+\Delta_Y=f^*(K_X+\Delta)$, 
and $\Supp \Delta_Y$ is a simple normal crossing divisor on $Y$. 
It is easy to see that 
$\Bir (X, \Delta)$ is isomorphic to $\Bir (Y, \Delta_Y)$ because 
$f$ is birational. 
By \cite[Corollary 3.13 and Remark 3.16]{fujino-gongyo}, 
we know that $\Bir (Y, \Delta_Y)$ is a finite group. 
Therefore, so is $\Bir (X, \Delta)$. 
Since $\Aut (X, \Delta)$ is a subgroup of $\Bir (X, \Delta)$, 
$\Aut (X, \Delta)$ is also a finite group. 
\end{proof}

As a direct consequence of Theorem \ref{67}, we obtain the following corollary. 

\begin{cor}\label{cor614}
Let $(X, \Delta)$ be a stable pair such that $\Delta$ is a $\mathbb Q$-divisor. 
Then $\Bir (X, \Delta)$ and $\Aut (X, \Delta)$ are finite groups. 
\end{cor}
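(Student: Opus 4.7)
The plan is to reduce Corollary \ref{cor614} to Theorem \ref{67} by checking its hypotheses. A stable pair $(X,\Delta)$ is by definition a projective semi log canonical pair with $K_X+\Delta$ ample. Projectivity gives completeness, and $\Delta$ is assumed to be a $\mathbb{Q}$-divisor. What remains is to verify the bigness hypothesis in Theorem \ref{67}, namely that $K_{X_i^\nu}+\Theta_i$ is big on each irreducible component $X_i^\nu$ of the normalization, where $K_{X_i^\nu}+\Theta_i = \nu^*(K_X+\Delta)|_{X_i^\nu}$.

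For this, I would argue as follows. The normalization map $\nu:X^\nu\to X$ is finite, so $\nu^*(K_X+\Delta)$ is ample on $X^\nu$ because the pullback of an ample divisor by a finite morphism is ample. Restricting an ample $\mathbb{R}$-Cartier divisor to any closed subvariety yields an ample divisor; in particular, its restriction $K_{X_i^\nu}+\Theta_i$ to each irreducible component $X_i^\nu$ is ample, hence big. (Since $X^\nu$ is a disjoint union of the $X_i^\nu$, this restriction is really just a direct summand.) With bigness verified, Theorem \ref{67} applies directly and gives that $\Bir(X,\Delta)$ is a finite group, and consequently so is its subgroup $\Aut(X,\Delta)$.

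There is essentially no obstacle here: the corollary is a packaging result and the only content is the observation that ampleness is preserved by finite pullback and by restriction to irreducible components, yielding bigness of the log canonical divisor on each normalized component. All the real work is already done in Theorem \ref{67} (which itself rests on \cite[Corollary 3.13 and Remark 3.16]{fujino-gongyo}), and no further use of the quasi-log machinery or of Theorem \ref{main2} is needed for this step.
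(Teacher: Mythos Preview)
Your proof is correct and follows essentially the same approach as the paper: observe that $K_{X^\nu}+\Theta=\nu^*(K_X+\Delta)$ is ample (hence big on each component) because $\nu$ is finite, and then invoke Theorem~\ref{67}. The paper's proof is a two-line version of yours, leaving implicit the standard facts about ampleness under finite pullback that you spell out.
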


\begin{proof}
Since $K_{X^\nu}+\Theta$ is ample, where 
$\nu:X^\nu\to X$ is the normalization and $K_{X^\nu}+\Theta=\nu^*(K_X+\Delta)$, 
$\Bir (X, \Delta)$ is a finite group by Theorem \ref{67}. 
Therefore, so is $\Aut (X, \Delta)$. 
\end{proof}

Corollary \ref{cor614} seems to be indispensable 
when we consider moduli problems for stable pairs. 

\section{Appendix:~Big $\mathbb R$-divisors}\label{sec-appendix}

In this appendix, we discuss the notion of 
big $\mathbb R$-divisors on singular 
varieties. The basic references of big $\mathbb R$-divisors are 
\cite[2.2]{lposi} and 
\cite[II.~\S 3 and \S5]{nakayama2}. 
Since we have to consider big $\mathbb R$-divisors on 
{\em{non-normal}} 
varieties, we give supplementary 
definitions and arguments to \cite{lposi} and \cite{nakayama2}. 

First, let us quickly recall the definition of 
big Cartier divisors on normal complete irreducible varieties. 
For details, see, for example, \cite[\S 0-3]{kmm}. 

\begin{defn}[Big Cartier divisors]\label{def-big}
Let $X$ be a normal complete irreducible variety and let $D$ be a Cartier 
divisor 
on $X$. 
Then $D$ is {\em{big}} if one of the 
following equivalent conditions holds. 
\begin{itemize}
\item[(1)] $\underset{m\in \mathbb N}{\max}\{ \dim \Phi_{|mD|}(X)\}=\dim X$, 
where $\Phi _{|mD|}:X\dashrightarrow \mathbb P^N$ is the 
rational map associated to 
the linear system $|mD|$ and $\Phi_{|mD|}(X)$ is the 
image of $\Phi _{|mD|}$. 
\item[(2)] There exist a rational number $\alpha$ and a positive 
integer $m_0$ such that 
$$
\alpha m^{\dim X}\leq \dim H^0(X, \mathcal O_X(mm_0D))
$$ 
for every $m\gg 0$. 
\end{itemize}
It is well known that 
we can take $m_0=1$ in the condition (2). 
\end{defn}
One of the most important properties of big Cartier divisors 
is known as Kodaira's lemma. 

\begin{lem}[Kodaira's lemma]\label{kod-lem} 
Let $X$ be a normal complete irreducible variety and 
let $D$ be a big Cartier divisor on $X$. 
Then, for an arbitrary Cartier divisor $M$, 
we have $H^0(X, \mathcal O_X(lD-M))\ne 0$ for every $l \gg 0$. 
\end{lem}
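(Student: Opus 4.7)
The plan is to first reduce to the case when $M$ is effective, and indeed very ample. Since $X$ is complete, the statement makes sense; and since $D$ is big, $X$ is Moishezon, so by passing to a projective birational modification (which does not affect $H^0$ of either $\mathcal O_X(lD-M)$ or $\mathcal O_X(lD)$ up to the pullback) we may as well assume $X$ is projective. Pick any very ample Cartier divisor $A$ on $X$ such that $M+A$ is also very ample; then $M\sim (M+A)-A$ exhibits $M$ as a difference of two very ample divisors $H_1=M+A$ and $H_2=A$. Choosing a nonzero section $s\in H^0(X,\mathcal O_X(H_2))$ yields an injection $\mathcal O_X(lD-H_1)\hookrightarrow \mathcal O_X(lD-M)$, so it suffices to prove $H^0(X,\mathcal O_X(lD-H))\neq 0$ for some/all $l\gg 0$ whenever $H$ is a very ample Cartier divisor.

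Next I would use the standard short exact sequence. Taking $H$ to be a general very ample divisor and tensoring the ideal sheaf sequence $0\to \mathcal O_X(-H)\to \mathcal O_X\to \mathcal O_H\to 0$ with $\mathcal O_X(lD)$ gives
\begin{equation*}
0\to \mathcal O_X(lD-H)\to \mathcal O_X(lD)\to \mathcal O_H(lD|_H)\to 0,
\end{equation*}
whence $h^0(X,\mathcal O_X(lD-H))\geq h^0(X,\mathcal O_X(lD))-h^0(H,\mathcal O_H(lD|_H))$. The left term is bounded below by bigness of $D$, via condition (2) of Definition \ref{def-big}: there exists $\alpha>0$ with $h^0(X,\mathcal O_X(lD))\geq \alpha\, l^{n}$ for $l\gg 0$, where $n=\dim X$. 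The right term must be controlled from above by something that grows strictly more slowly in $l$.

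For the upper bound, I would fix a very ample Cartier divisor $B$ with $B-D$ linearly equivalent to an effective divisor (available by the same trick: write $D\sim V_1-V_2$ with $V_i$ very ample and set $B=V_1$ plus a large multiple of any very ample class). Then $lB-lD$ is effective on $X$, so $h^0(H,\mathcal O_H(lD|_H))\leq h^0(H,\mathcal O_H(lB|_H))$, and since $B|_H$ is ample (in fact very ample) on the projective variety $H$ of dimension $n-1$, the right hand side is a polynomial in $l$ of degree at most $n-1$ for $l\gg 0$. Combining the two estimates gives $h^0(X,\mathcal O_X(lD-H))\geq \alpha\, l^n - O(l^{n-1})>0$ for $l$ sufficiently large, as required.

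The main obstacle is arranging that $H$ behaves well enough for the upper bound to be truly of order $l^{n-1}$: this requires choosing $H$ general (so that $H$ is an $(n-1)$-dimensional projective variety), and finding the auxiliary very ample $B$ dominating $D$. Both are standard maneuvers on a projective variety, but they are the only nontrivial points; everything else is pure bookkeeping with the cohomology long exact sequence and the bigness hypothesis.
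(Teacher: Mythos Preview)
Your proof is correct and follows essentially the same route as the paper: reduce to the projective case, replace $M$ by a very ample divisor, use the short exact sequence from a hyperplane section, and compare the $l^{n}$ growth of $h^0(X,\mathcal O_X(lD))$ against the $O(l^{n-1})$ growth on the hyperplane. The only difference is that you spell out the upper bound via an auxiliary very ample $B$ dominating $D$, whereas the paper simply invokes the general fact that $h^0$ of any line bundle on a complete variety of dimension $n-1$ grows like $O(l^{n-1})$; both are standard and lead to the same conclusion.
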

\begin{proof}
By replacing $X$ with its resolution, 
we may assume that $X$ is smooth and 
projective. 
Then it is sufficient to show that 
for a sufficiently ample Cartier divisor 
$A$, 
$H^0(X, \mathcal O_X(lD-A))\ne 0$ for every $l \gg 0$. 
Since we have the exact sequence 
$$
0\to \mathcal O_X(lD-A)\to \mathcal O_X(lD)\to \mathcal O_Y(lD)\to 0, 
$$ 
where $Y$ is a general member of $|A|$, 
and since there exist positive rational 
numbers $\alpha$, $\beta$ such that 
$\alpha l ^{\dim X}\leq \dim H^0(X, \mathcal O_X(lD))$ and 
$\dim H^0(Y, \mathcal O_Y(lD))\leq {\beta} l ^{\dim Y}$ for 
every $l \gg 0$, 
we have $H^0(X, \mathcal O_X(lD-A))\ne 0$ for every $l \gg 0$. 
\end{proof}

For non-normal varieties, we need the following definition. 

\begin{defn}[Big Cartier divisors on non-normal 
varieties]\label{def-big2}
Let $X$ be a complete irreducible variety and 
let $D$ be a Cartier divisor on $X$. 
Then $D$ is {\em{big}} 
if 
$\nu^*D$ is big on $X^\nu$, where $\nu:X^\nu \to X$ is 
the normalization. 
\end{defn}

Before we define big $\mathbb R$-divisors, let us recall the 
definition of big $\mathbb Q$-divisors. 

\begin{defn}[Big $\mathbb Q$-divisors]\label{defnQ}
Let $X$ be a complete irreducible variety 
and let $D$ be a $\mathbb Q$-Cartier $\mathbb Q$-divisor 
on $X$. 
Then $D$ is {\em{big}} if $mD$ is a big Cartier divisor for 
some positive integer $m$. 
\end{defn}

We note the following obvious lemma. 

\begin{lem}\label{lem0555}
Let $f:W\to V$ be a birational morphism between normal 
complete irreducible 
varieties and let $D$ be a $\mathbb Q$-Cartier $\mathbb Q$-divisor 
on $V$. 
Then $D$ is big if and only if so is $f^*D$. 
\end{lem}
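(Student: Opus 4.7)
The plan is to reduce the equivalence to an identity of spaces of global sections via the projection formula. First I would pick a positive integer $m_0$ with $m_0 D$ Cartier on $V$; then $m_0 f^*D=f^*(m_0 D)$ is a Cartier divisor on $W$, and by Definition \ref{defnQ} it is enough to treat the case where $D$ itself is Cartier.

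Next, since $f:W\to V$ is birational and $V$ is normal, I would invoke the fact that $f_*\mathcal O_W\simeq \mathcal O_V$; this is a consequence of Zariski's main theorem combined with the properness of $f$ (which is automatic once $V$ and $W$ are complete, as is needed for the notion of bigness to make sense). The projection formula then gives
$$
f_*\mathcal O_W(mf^*D)\simeq f_*\mathcal O_W\otimes \mathcal O_V(mD)\simeq \mathcal O_V(mD)
$$
for every positive integer $m$, and taking global sections yields a canonical isomorphism
$$
H^0(W,\mathcal O_W(mf^*D))\simeq H^0(V,\mathcal O_V(mD)).
$$

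Finally, since $\dim W=\dim V$, the growth-rate characterization of bigness in Definition \ref{def-big} (2) shows that $\dim H^0(W,\mathcal O_W(mf^*D))$ grows like $m^{\dim W}$ if and only if $\dim H^0(V,\mathcal O_V(mD))$ grows like $m^{\dim V}$, so $D$ is big if and only if $f^*D$ is big. The main (and really only) technical point is the identification $f_*\mathcal O_W\simeq \mathcal O_V$; once this is in hand everything else is routine manipulation of global sections via the projection formula.
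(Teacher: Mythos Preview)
Your proof is correct. The paper does not actually prove this lemma---it is introduced with the phrase ``We note the following obvious lemma'' and no argument is given---so your write-up supplies the standard verification that justifies the word ``obvious'': reduce to Cartier divisors, use $f_*\mathcal O_W\simeq \mathcal O_V$ and the projection formula to identify $H^0(W,\mathcal O_W(mf^*D))\simeq H^0(V,\mathcal O_V(mD))$, and then compare growth rates via Definition~\ref{def-big}(2).
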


Next, let us start to consider big $\mathbb R$-divisors. 

\begin{defn}[Big $\mathbb R$-divisors on complete varieties]\label{defnA}
An $\mathbb R$-Cartier $\mathbb R$-divisor $D$ on a complete 
irreducible variety $X$ is {\em{big}} 
if 
it can be written in the form 
$$
D=\sum _i a_i D_i
$$
where each $D_i$ is a big Cartier divisor 
and $a_i$ is a positive real number for every $i$.
\end{defn} 

Let us recall an easy but very important lemma. 

\begin{lem}[{see \cite[2.11. Lemma]{nakayama2}}]\label{lemABC}  
Let $f:Y\to X$ be a proper surjective 
morphism between normal irreducible varieties 
with connected fibers. 
Let $D$ be an $\mathbb R$-Cartier $\mathbb R$-divisor 
on $X$. Then we have a canonical isomorphism 
$$\mathcal O_X(\lfloor 
D\rfloor)\simeq 
f_*\mathcal O_Y(\lfloor f^*D\rfloor). $$
\end{lem}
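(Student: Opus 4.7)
The plan is to exhibit a canonical map of sheaves on $X$ and check it is bijective on sections. First I would note that the inequality $f^*\lfloor D\rfloor\le\lfloor f^*D\rfloor$ (which holds because $f^*\lfloor D\rfloor$ is integral and at most $f^*D$) induces a natural inclusion of reflexive sheaves on $Y$; pushing forward and using the standard identification $f_*\mathcal O_Y\simeq\mathcal O_X$ (from Stein factorization applied to a proper surjective morphism with connected fibers into a normal variety) yields a canonical map $\mathcal O_X(\lfloor D\rfloor)\to f_*\mathcal O_Y(\lfloor f^*D\rfloor)$. It then suffices to check bijectivity on sections over an arbitrary open $U\subset X$.

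Next, I would translate both sides into valuative conditions. Since $X$ and $Y$ are normal, sections of $\mathcal O_X(\lfloor D\rfloor)$ over $U$ are exactly $\phi\in K(X)$ with $(\phi)+D\ge 0$ on $U$ (one uses $v_P(\phi)\in\mathbb Z$ to replace $\lfloor D\rfloor$ by $D$), and sections of $f_*\mathcal O_Y(\lfloor f^*D\rfloor)$ over $U$ are $\psi\in K(Y)$ with $(\psi)+f^*D\ge 0$ on $f^{-1}(U)$. The heart of the argument is producing, from such a $\psi$, a rational function $\phi\in K(X)$ with $f^*\phi=\psi$. Here the key point is that $\Supp f^*D$ is vertical over $X$, hence disjoint from the generic fiber $Y_\eta$; so $\psi$ is regular on an open set containing $Y_\eta$ and restricts to a regular function there. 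Taking the generic stalk of $f_*\mathcal O_Y=\mathcal O_X$ (equivalently, cohomology and base change on the generic fiber) gives $\Gamma(Y_\eta,\mathcal O_{Y_\eta})=K(X)$, so $\psi|_{Y_\eta}=f^*\phi$ for a unique $\phi\in K(X)$; density of $Y_\eta$ in $Y$ then forces $\psi=f^*\phi$ in $K(Y)$.

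Finally, I would verify $(\phi)+D\ge 0$ on $U$. For any prime divisor $P\subset U$, surjectivity of $f$ yields a prime divisor $Q\subset Y$ mapping dominantly onto $P$ with some ramification index $e\ge 1$; writing $c$ for the coefficient of $D$ along $P$, the identities $v_Q(f^*\phi)=e\,v_P(\phi)$ and $v_Q(f^*D)=ec$ transform the hypothesis $v_Q(\psi)+\lfloor v_Q(f^*D)\rfloor\ge 0$ into $e(v_P(\phi)+c)\ge ec-\lfloor ec\rfloor=\{ec\}\ge 0$, whence $v_P(\phi)+c\ge 0$. The main obstacle I anticipate is the descent step in the previous paragraph: one must simultaneously exploit the verticality of $\Supp f^*D$ and the identification $\Gamma(Y_\eta,\mathcal O_{Y_\eta})=K(X)$ to produce a candidate $\phi$; once $\phi$ is in hand, the ramification bookkeeping is essentially automatic, with the fractional-part slack $\{ec\}\ge 0$ doing exactly the work needed to pass from the $Y$-side inequality to the $X$-side one.
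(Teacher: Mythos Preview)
The paper does not supply a proof of this lemma; it merely cites Nakayama's book. Your argument is correct and is essentially the standard one. The construction of the canonical map and the descent step via $\Gamma(Y_\eta,\mathcal O_{Y_\eta})=(f_*\mathcal O_Y)_\eta=\mathcal O_{X,\eta}=K(X)$ are exactly right, and the valuative bookkeeping in the last paragraph is clean.

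One minor point you leave implicit: the existence, for each prime divisor $P\subset U$, of a prime divisor $Q\subset f^{-1}(U)$ dominating $P$. This is not entirely automatic for an arbitrary surjection, but follows here because the local ring $\mathcal O_{X,\xi}$ at the generic point $\xi$ of $P$ is a DVR; localizing $Y$ over it and applying Krull's Hauptidealsatz to the pullback of a uniformizer shows that the (nonempty) special fiber has a component of codimension one in $Y$. Once this is said, your proof is complete.
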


\begin{lem}\label{lem088} 
Let $D$ be a big $\mathbb R$-Cartier
$\mathbb R$-divisor on a {\em{smooth}} projective irreducible variety $X$.
Then there exist a positive rational number $\alpha$ and
a positive integer $m_0$ such that
$$
\alpha m^{\dim X}\leq \dim H^0(X, \mathcal O_X(\lfloor mm_0D\rfloor))
$$
for every $m \gg 0$.
\end{lem}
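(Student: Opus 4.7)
The plan is to reduce to the Cartier growth estimate of Definition \ref{def-big} by constructing a big $\mathbb Q$-Cartier $\mathbb Q$-divisor $D^\circ$ that is dominated coefficient-wise by $D$, and then comparing round-downs. Since $X$ is smooth, every Weil divisor is Cartier, so I write $D = \sum_j c_j E_j$ as a finite formal $\mathbb R$-linear combination of distinct prime divisors; for a positive integer $M$, set
$$D^\circ_M := \sum_j \frac{\lfloor M c_j\rfloor}{M} E_j.$$
Then $D^\circ_M$ is a $\mathbb Q$-Cartier $\mathbb Q$-divisor with $M D^\circ_M$ an integer Cartier divisor, $D - D^\circ_M$ is effective with coefficients in $[0, 1/M)$, and $D^\circ_M \to D$ as $M \to \infty$ inside the finite-dimensional $\mathbb R$-space of divisors supported on $\Supp(D)$.

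The key step is to show $MD^\circ_M$ is big Cartier for $M$ sufficiently large. Write $D = \sum_i a_i D_i$ as in Definition \ref{defnA}, with $a_i > 0$ and $D_i$ big Cartier. Applying Kodaira's lemma (Lemma \ref{kod-lem}) to each $D_i$ against a fixed ample Cartier divisor $H$ on $X$ yields positive integers $l_i$ and effective Cartier divisors $F_i$ with $l_i D_i \sim H + F_i$. Summing gives
$$D \sim_{\mathbb R} c H + E,\qquad c = \sum_i \tfrac{a_i}{l_i} > 0,\quad E = \sum_i \tfrac{a_i}{l_i}F_i \geq 0,$$
and consequently $D^\circ_M \sim_{\mathbb R} \bigl(cH - (D - D^\circ_M)\bigr) + E$. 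Since the ample cone in $N^1(X)_{\mathbb R}$ is open and the class of the effective divisor $D - D^\circ_M$ tends to $0$ as $M \to \infty$, the $\mathbb R$-divisor $cH - (D - D^\circ_M)$ is ample for $M$ sufficiently large. Thus the numerical class of $D^\circ_M$ lies in the big cone of $N^1(X)_{\mathbb R}$ for such $M$; since bigness of a Cartier divisor on a smooth projective variety is a numerical property, $MD^\circ_M$ is a big Cartier divisor.

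Fixing such an $M$ and setting $m_0 := M$, $D^\circ := D^\circ_M$, Definition \ref{def-big}(2) applied to the big Cartier divisor $m_0 D^\circ$ (taking the auxiliary integer of that definition to be $1$) provides a positive rational $\alpha$ with $\alpha m^{\dim X} \leq \dim H^0(X, \mathcal O_X(m\cdot m_0 D^\circ))$ for $m \gg 0$. On the other hand, since $D^\circ \leq D$ coefficient-wise we have $mm_0 D^\circ \leq mm_0 D$, and because $mm_0 D^\circ$ is already an integer divisor it follows that $\lfloor mm_0 D\rfloor \geq mm_0 D^\circ$, with a corresponding injection $\mathcal O_X(mm_0 D^\circ)\hookrightarrow \mathcal O_X(\lfloor mm_0 D\rfloor)$. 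Taking global sections yields
$$\alpha m^{\dim X} \leq \dim H^0(X, \mathcal O_X(mm_0 D^\circ)) \leq \dim H^0(X, \mathcal O_X(\lfloor mm_0 D\rfloor))$$
for $m \gg 0$, as required.

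The main obstacle will be the openness-of-bigness step: one must pass from the definition $D = \sum a_i D_i$ to a $\sim_{\mathbb R}$ ample-plus-effective presentation of $D$ via Kodaira's lemma, and then invoke the standard fact that bigness is an open condition in $N^1(X)_{\mathbb R}$ to conclude that the small coefficient-wise decrease $D \mapsto D^\circ_M$ preserves bigness. Once these ingredients are in place, the comparison of round-downs and the extraction of the $m^{\dim X}$-growth from the Cartier case are entirely formal.
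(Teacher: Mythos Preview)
Your proof is correct and shares the same skeleton as the paper's: decompose $D=\sum a_i D_i$ via Definition~\ref{defnA}, apply Kodaira's lemma (Lemma~\ref{kod-lem}) to each $D_i$ to obtain $D\sim_{\mathbb R}cH+E$ with $H$ ample Cartier and $E$ effective, use openness of the ample cone to pass to an integer divisor dominated by $m_0D$, and then compare global sections after rounding down.

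The one substantive difference is \emph{which} divisor you round down. The paper rounds down the ample part: it sets $A=\lfloor m_0(D-E)\rfloor$, which is an ample Cartier divisor for $m_0\gg 0$ by openness of the ample cone, observes $A\leq m_0D$ since $\{m_0(D-E)\}+m_0E\geq 0$, and reads off the growth of $\dim H^0(X,\mathcal O_X(mA))$ directly from ampleness. You instead round down $D$ itself to $D^\circ_M$ and then have to argue that $MD^\circ_M$ is big; this forces you to invoke the fact that bigness on a smooth projective variety is a numerical property, which in this paper's development is only established afterwards (Proposition~\ref{lemD}). There is no logical circularity---Proposition~\ref{lemD} does not use Lemma~\ref{lem088}---and the fact is of course standard, but the paper's route is more self-contained at this point and slightly shorter: rounding down the ample side lands you on an ample Cartier divisor in one step, so only openness of ampleness (rather than numerical invariance of bigness) is needed.
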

 
\begin{proof}
By using Lemma \ref{kod-lem},
we can find an effective $\mathbb R$-Cartier $\mathbb R$-divisor
$E$ on $X$ such that
$D-E$ is ample.
Therefore, there exists a positive integer
$m_0$ such that $A=\lfloor m_0D-m_0E\rfloor$ is ample.
We note that $m_0D=A+\{m_0D-m_0E\}+m_0E$.
This implies that $mA\leq mm_0D$ for any positive
integer $m$.
Therefore,
$$
\dim H^0(X, \mathcal O_X(mA))\leq \dim H^0(X, \mathcal O_X(\lfloor
mm_0D\rfloor)).
$$
So, we can find a positive rational number $\alpha$ such that
$$
\alpha m^{\dim X}\leq \dim H^0(X, \mathcal O_X(\lfloor
mm_0D\rfloor)).
$$ 
It is the desired inequality. 
\end{proof}

\begin{rem}\label{rem099} 
By Lemma \ref{lem0555} and Lemma \ref{lem088}, 
Definition \ref{defnA} is 
compatible with Definition \ref{defnQ}. 
\end{rem}

\begin{lem}[Weak Kodaira's lemma]\label{wkodaira}
Let $X$ be a projective irreducible variety and let $D$ be 
a big $\mathbb R$-Cartier $\mathbb R$-divisor on $X$. 
Then we can write 
$$
D\sim _{\mathbb R}A+E, 
$$
where $A$ is an ample $\mathbb Q$-divisor on $X$ and 
$E$ is an effective $\mathbb R$-Cartier 
$\mathbb R$-divisor on $X$. 
\end{lem}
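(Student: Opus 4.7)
The plan is to reduce to a Kodaira-type non-vanishing for a single big Cartier divisor on the (possibly non-normal) variety $X$, and then to upgrade the resulting ample $\mathbb R$-divisor to a $\mathbb Q$-divisor by absorbing a rational perturbation into the effective part. By Definition \ref{defnA} I would first write $D = \sum_{i=1}^{k} a_i D_i$ with $a_i > 0$ and each $D_i$ a big Cartier divisor. Fix a very ample Cartier divisor $H$ on $X$. The key sub-goal is to produce, for each $i$, a positive integer $l_i$ and an effective Cartier divisor $F_i$ on $X$ with $l_i D_i \sim H + F_i$.

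Granting this, one obtains $D \sim_{\mathbb R} cH + E$ with $c := \sum_i a_i/l_i > 0$ and $E := \sum_i (a_i/l_i) F_i$ an effective $\mathbb R$-divisor. Then, choosing any rational $c' \in (0, c)$ and any effective Cartier divisor $H' \sim H$ (possible since $H$ is very ample), we have $cH \sim_{\mathbb R} c'H + (c-c')H'$, so
$$D \sim_{\mathbb R} c'H + \bigl(E + (c-c')H'\bigr),$$
where $A := c'H$ is an ample $\mathbb Q$-Cartier $\mathbb Q$-divisor and the remainder is an effective $\mathbb R$-Cartier $\mathbb R$-divisor, as required.

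The technical heart is the Kodaira-type non-vanishing on non-normal $X$. Lemma \ref{kod-lem} gives this only when $X$ is normal, so I would derive the non-normal case from the normal one via the conductor exact sequence
$$0\to \mathcal O_X\to \nu_*\mathcal O_{X^\nu}\to \mathcal Q\to 0,$$
where $\nu:X^\nu\to X$ is the normalization and $\mathcal Q$ is supported on the non-normal locus, a closed subscheme of dimension $\leq \dim X -1$. Tensoring with the invertible sheaf $\mathcal L_l:=\mathcal O_X(lD_i - H)$ and applying the projection formula, the associated long exact sequence yields
$$0\to H^0(X,\mathcal L_l)\to H^0(X^\nu,\nu^*\mathcal L_l)\to H^0(X,\mathcal Q\otimes \mathcal L_l).$$
Bigness of $\nu^*D_i$ on the normal variety $X^\nu$, combined with Lemma \ref{lem088} pulled back to a resolution of $X^\nu$, forces the middle term to grow like $l^{\dim X}$, while $\dim H^0(X,\mathcal Q\otimes \mathcal L_l)$ grows at most polynomially of degree $\leq \dim X -1$ in $l$ because of the support of $\mathcal Q$. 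Hence the kernel, namely $H^0(X,\mathcal L_l)$, is nonzero for $l \gg 0$, producing the $F_i$.

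The main obstacle will be precisely this growth comparison on a non-normal base: one must genuinely control leading-order growth of $H^0(X^\nu,\nu^*\mathcal L_l)$ (not merely eventual non-vanishing) and simultaneously bound $H^0(X,\mathcal Q\otimes \mathcal L_l)$ from above, even though $\mathcal L_l$ is not a tensor power of a single ample line bundle. These are standard Hilbert-polynomial and noetherian-induction arguments on the support of $\mathcal Q$, but they must be assembled carefully in this relative-to-$l$ setting; once in place, the remaining $\mathbb Q$-approximation of the ample part is formal.
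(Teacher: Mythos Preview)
Your proof is correct and follows the same overall plan as the paper: write $D=\sum_i a_iD_i$ via Definition~\ref{defnA}, establish a Kodaira-type non-vanishing $l_iD_i\sim H+F_i$ for each big Cartier piece, and combine. The difference lies only in how the Kodaira step is carried out on the non-normal $X$. The paper works directly on $X$ with the restriction sequence
\[
0\to\mathcal O_X(lB-H)\to\mathcal O_X(lB)\to\mathcal O_H(lB)\to 0
\]
for a general very ample $H$, comparing the growth $h^0(X,\mathcal O_X(lB))\geq\alpha l^{\dim X}$ against $h^0(H,\mathcal O_H(lB))\leq\beta l^{\dim X-1}$; the lower bound on the non-normal $X$ is asserted as ``easy to see''. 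Your conductor-sequence argument is precisely one way to make that lower bound rigorous (or, as you arrange it, to bypass it by doing the growth comparison on $X^\nu$ instead). So the two routes are close cousins: the paper's is shorter, while yours is more explicit about the non-normal bookkeeping. Your final step---absorbing the irrational part of the ample coefficient into the effective divisor to force $A$ to be a $\mathbb Q$-divisor---is also something the paper leaves implicit in the phrase ``we can easily obtain the desired decomposition''.
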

\begin{proof}
Let $B$ be a big Cartier divisor on $X$ and 
let $H$ be a general very ample Cartier divisor 
on $X$. 
We consider the short exact sequence 
$$
0\to \mathcal O_X(lB-H)\to \mathcal O_X(lB)\to \mathcal O_H(lB)\to 0
$$ 
for every $l$. 
It is easy to see that  
$\dim H^0(X, \mathcal O_X(lB))\geq \alpha l^{\dim X}$ and 
$\dim H^0(H, \mathcal O_H(lB))\leq \beta l^{\dim H}$ for 
some positive rational numbers $\alpha$, $\beta$, and for 
every $l\gg 0$. 
Therefore, $H^0(X, \mathcal O_X(lB-H))\ne 0$ for some 
large $l$. This means that 
$lB\sim H+G$ for some effective Cartier divisor $G$. By Definition \ref{defnA}, 
we can write $D=\sum _i a_i D_i$ 
where $a_i$ is a positive real number and 
$D_i$ is a big Cartier divisor for every $i$. 
By applying the above argument to 
each $D_i$, 
we can easily obtain the desired 
decomposition $D\sim _{\mathbb R}A+E$. 
\end{proof}

We prepare an important lemma. 

\begin{lem}\label{lemN}
Let $X$ be a complete irreducible variety 
and let $N$ be a numerically trivial 
$\mathbb R$-Cartier 
$\mathbb R$-divisor 
on $X$. 
Then $N$ can be written 
in the form 
$$
N=\sum _i r_i N_i
$$
where each $N_i$ is a numerically trivial 
Cartier divisor 
and $r_i$ is a real number for every $i$. 
\end{lem}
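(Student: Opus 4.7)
The plan is to reduce the statement to linear algebra once one knows that the N\'eron--Severi group of Cartier divisors modulo numerical equivalence is finitely generated. First I would use that $N$ is $\mathbb{R}$-Cartier to write
$N=\sum_{j=1}^{n} a_j D_j$
with each $D_j$ a Cartier divisor and $a_j\in\mathbb{R}$. The span $V=\sum_j \mathbb{R}D_j$ is then a finite-dimensional $\mathbb{R}$-vector subspace of $\mathrm{CaDiv}(X)\otimes_{\mathbb{Z}}\mathbb{R}$, and I would consider the natural linear map $\phi\colon V\to N^1(X)_{\mathbb{R}}$ to the numerical N\'eron--Severi space of $X$. By hypothesis $N$ lies in the kernel $W$ of $\phi$, so the task is to produce a basis of $W$ consisting of (scaled) numerically trivial Cartier divisors.

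The key input is that for a complete variety $X$ the group $N^1(X)=\mathrm{CaDiv}(X)/{\equiv}$ is finitely generated and torsion-free (torsion-freeness is automatic: if $nD\equiv 0$ then $n(D\cdot C)=0$ for every curve $C$, hence $D\equiv 0$), so it is free of some finite rank $\rho$. Finite generation is classical in the projective case, and in the general complete case it follows from Chow's lemma: a surjective proper birational morphism $f\colon X'\to X$ from a projective variety induces an injection $f^{\ast}\colon N^1(X)\hookrightarrow N^1(X')$ via the projection formula, so $N^1(X)$ embeds into a finitely generated group. Granting this, I would pick Cartier divisors $E_1,\dots,E_{\rho}$ whose classes form a $\mathbb{Z}$-basis of $N^1(X)$ and expand
$D_j=\sum_k b_{jk}E_k+F_j$
with $b_{jk}\in\mathbb{Z}$ and $F_j$ a numerically trivial Cartier divisor.

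Substituting gives
$N=\sum_k\Bigl(\sum_j a_j b_{jk}\Bigr)E_k+\sum_j a_j F_j.$
Since $N$ and each $F_j$ are numerically trivial, the sum $\sum_k(\sum_j a_j b_{jk})E_k$ is numerically trivial as well; but $[E_1],\dots,[E_{\rho}]$ are linearly independent in $N^1(X)_{\mathbb{R}}$, so each coefficient $\sum_j a_j b_{jk}$ must vanish. Hence $N=\sum_j a_j F_j$ is the required presentation, with $N_j:=F_j$ numerically trivial Cartier divisors and $r_j:=a_j\in\mathbb{R}$.

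The only non-formal point is the finite generation of $N^1(X)$ in the complete (as opposed to projective) and possibly singular setting; I would regard this N\'eron--Severi input as the main technical hurdle, although it is standard in the literature. Once it is in hand the rest of the argument is pure bookkeeping with a chosen integral basis, and no further geometric input is needed.
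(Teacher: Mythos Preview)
Your argument is correct. The paper's proof reaches the same conclusion via the dual picture: instead of choosing a $\mathbb{Z}$-basis $E_1,\dots,E_\rho$ of $N^1(X)$, it chooses integral $1$-cycles $Z_1,\dots,Z_\rho$ whose classes form a basis of $N_1(X)$. Then, writing $N=\sum_j a_j D_j$ with $D_j$ Cartier, the condition $N\equiv 0$ becomes the system of integer linear equations $\sum_j a_j(D_j\cdot Z_k)=0$; since real solutions of an integer linear system are $\mathbb{R}$-linear combinations of integer solutions, and an integer solution $(n_j)$ yields the numerically trivial Cartier divisor $\sum_j n_j D_j$, the conclusion follows immediately.

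Both approaches rest on the same finiteness input, namely that $\rho(X)<\infty$ for a complete variety. The paper's version is marginally more economical: it avoids decomposing each $D_j$ and invoking torsion-freeness of $N^1(X)$, reducing directly to the elementary fact that a rational linear subspace of $\mathbb{R}^n$ has a rational basis. Your version, on the other hand, makes the structure of $N^1(X)$ explicit and produces concrete numerically trivial divisors $F_j=D_j-\sum_k b_{jk}E_k$, which is perhaps more transparent. Either way, the substance is the same.
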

\begin{proof}
Let $Z_j$ be an integral $1$-cycle on $X$ for $1\leq j\leq \rho=\rho(X)$ such 
that $\{[Z_1], \cdots, [Z_\rho]\}$ is a basis of the vector space 
$N_1(X)$. 
The condition that an $\mathbb R$-Cartier 
$\mathbb R$-divisor $B=\sum _i b_i B_i$, 
where $b_i$ is a real number and 
$B_i$ is Cartier for every $i$, 
is numerically trivial is given by 
the integer linear equations 
$$
\sum _i b_i (B_i\cdot Z_j)=0
$$ 
on $b_i$ for $1\leq j \leq \rho$. 
Any real solution to these 
equations is an $\mathbb R$-linear combination of integral 
ones. Thus, we obtain the desired expression $N=\sum _i r_i N_i$. 
\end{proof}

The following proposition seems to be very important. 

\begin{prop}\label{lemD}
Let $X$ be a complete irreducible 
variety. 
Let $D$ and $D'$ be $\mathbb R$-Cartier 
$\mathbb R$-divisors on $X$. If $D\equiv D'$, 
then $D$ is big if and only if 
so is $D'$. 
\end{prop}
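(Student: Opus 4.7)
The plan is to exploit numerical equivalence $D \equiv D'$ by writing $D - D'$ as an explicit combination of numerically trivial Cartier divisors (via Lemma \ref{lemN}), and then absorbing this perturbation into an ample piece obtained from weak Kodaira's lemma (Lemma \ref{wkodaira}). By the symmetry in the roles of $D$ and $D'$ it suffices to prove that $D$ big implies $D'$ big, and by Chow's lemma we may assume $X$ is projective.

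Applying Lemma \ref{wkodaira} to $D$, I would write $D \sim_{\mathbb R} A + E$ with $A$ an ample $\mathbb Q$-Cartier divisor and $E$ an effective $\mathbb R$-Cartier divisor. Since $D \equiv D'$, the difference $N := D' - A - E$ is a numerically trivial $\mathbb R$-Cartier divisor, and Lemma \ref{lemN} expresses it as $N = \sum_{i=1}^k r_i N_i$ with $r_i \in \mathbb R$ and each $N_i$ a numerically trivial Cartier divisor. The resulting identity
\[
D' = \Bigl(A + \sum_{i=1}^k r_i N_i\Bigr) + E
\]
reduces the problem to checking that the first summand is big. The numerical class of $A + \sum_i r_i N_i$ equals that of the ample divisor $A$, so by Kleiman's criterion on the projective variety $X$ this divisor is ample as an $\mathbb R$-Cartier divisor, hence a positive $\mathbb R$-linear combination of ample Cartier divisors, each of which is big on a complete variety. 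By Definition \ref{defnA} the divisor $A + \sum_i r_i N_i$ is big. After replacing these ample Cartier summands by linearly equivalent very ample effective Cartier divisors and shaving off a small rational multiple, one can further arrange $A + \sum_i r_i N_i \sim_{\mathbb R} A' + E''$ with $A'$ an ample $\mathbb Q$-divisor and $E''$ effective $\mathbb R$-Cartier; combining with $E$, we obtain $D' \sim_{\mathbb R} A' + (E + E'')$, which is the form needed for a converse to weak Kodaira's lemma to produce a decomposition of $D'$ as required by Definition \ref{defnA}.

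The main obstacle I expect is exactly this final converse step, namely showing that an $\mathbb R$-Cartier divisor which is $\mathbb R$-linearly equivalent to an ample $\mathbb Q$-divisor plus an effective $\mathbb R$-Cartier divisor actually satisfies Definition \ref{defnA}. The subtlety is that on a non-normal $X$ an effective $\mathbb R$-Cartier divisor need not decompose as a positive $\mathbb R$-linear combination of effective Cartier divisors, so the decomposition into big Cartier summands demanded by Definition \ref{defnA} cannot be produced by purely formal manipulation of coefficients. The remedy is to pass to the normalization $\nu : X^\nu \to X$, where bigness can be detected via the asymptotic growth rate of $h^0(\lfloor m(\cdot)\rfloor)$ as in Lemma \ref{lem088}, and then descend using Remark \ref{rem099} together with Definition \ref{def-big2}.
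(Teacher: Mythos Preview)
Your strategy has two genuine gaps that the tools you cite cannot close.

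\textbf{The Chow reduction.} Replacing $X$ by a projective modification $\mu:X'\to X$ lets you apply Lemma~\ref{wkodaira}, but at the end you only know that $\mu^*D'$ is big on $X'$. To conclude that $D'$ is big on $X$ you need the \emph{converse} of Lemma~\ref{lemC}, i.e.\ that bigness in the sense of Definition~\ref{defnA} descends along proper birational morphisms. That statement is not available at this point; in fact the only result in the paper going in that direction is Proposition~\ref{lemCD}, whose proof invokes the very Proposition~\ref{lemD} you are trying to establish.

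\textbf{The converse to weak Kodaira.} You correctly isolate the difficulty: from $D'\sim_{\mathbb R}A'+(E+E'')$ with $A'$ ample $\mathbb Q$-Cartier and $E+E''$ effective $\mathbb R$-Cartier, you must produce a decomposition $D'=\sum a_iD_i$ with $a_i>0$ and $D_i$ big Cartier \emph{on $X$}. Your proposed remedy---pass to $X^\nu$, detect bigness via growth of $h^0$, then descend via Definition~\ref{def-big2} and Remark~\ref{rem099}---does not work. Lemma~\ref{lem088} only gives the implication ``big $\Rightarrow$ growth''; the reverse implication is Proposition~\ref{lemCD}, which again uses Proposition~\ref{lemD}. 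Moreover Definition~\ref{def-big2} concerns Cartier divisors, not $\mathbb R$-Cartier divisors, so it gives no mechanism for descending bigness of $\nu^*D'$ to $D'$.

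The paper avoids both problems by a different reduction: write $D'-D=\sum_i r_iN_i$ with $N_i$ numerically trivial Cartier (Lemma~\ref{lemN}) and $D=\sum_j a_jB_j$ with $B_j$ big Cartier, and reduce to showing that $B+rG$ is big for $B$ big Cartier, $G$ numerically trivial Cartier, and $r\in\mathbb R$. A convex-combination trick reduces to $r\in\mathbb Q$, so $B+rG$ is $\mathbb Q$-Cartier; one then passes to a smooth projective resolution, applies Kodaira's lemma, and concludes via the compatibility of Definitions~\ref{defnQ} and~\ref{defnA} (Remark~\ref{rem099}). The point is that the reduction to $\mathbb Q$-coefficients lands you in a setting where bigness has an unambiguous cohomological characterisation, so no circularity arises and no projectivity of $X$ is needed.
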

\begin{proof}
We set $N=D'-D$. Then 
$N$ is a numerically trivial 
$\mathbb R$-Cartier 
$\mathbb R$-divisor 
on $X$. 
By Lemma \ref{lemN}, we can write 
$N=\sum _i r_i N_i$, where 
$r_i$ is a real number and $N_i$ is a numerically trivial Cartier 
divisor for every $i$. 
By Definition \ref{defnA}, 
we are reduced to showing that if $B$ is a big Cartier 
divisor and $G$ is a numerically trivial Cartier divisor, then 
$B+rG$ is big 
for any real number $r$. 
If $r$ is not a rational number, we can 
write 
$$
B+rG=t(B+r_1G)+(1-t)(B+r_2G)
$$
where $r_1$ and $r_2$ are rational, 
$r_1<r<r_2$, and $t$ is a real number with $0<t<1$. 
Therefore, we may assume that $r$ is rational. 
Let 
$f:Y\to X$ be a resolution. 
Then it is sufficient to check that $f^*B+rf^*G$ is big by Lemma \ref{lem0555} and 
Definitions \ref{def-big2}. 
So, we may assume that 
$X$ is smooth and projective. 
By Kodaira's lemma (see Lemma \ref{kod-lem}), we can write 
$lB\sim A+E$, where 
$A$ is an ample Cartier divisor, 
$E$ is an effective Cartier divisor, and $l$ is a 
positive integer. 
Thus, $l(B+rG)\sim (A+lrG)+E$. 
We note that 
$A+lrG$ is an ample $\mathbb Q$-divisor. 
This implies that $B+rG$ is a big $\mathbb Q$-Cartier 
$\mathbb Q$-divisor. 
We finish the proof. 
\end{proof}

We give a small remark on Iitaka's $D$-dimension for $\mathbb R$-divisors. 
Please compare it with Proposition \ref{lemD}. 

\begin{rem}
We consider $X=\mathbb P^1$ and take $P, Q \in X$ with $P\ne Q$. 
We set $D=\sqrt{2}P-\sqrt{2}Q$. 
Then it is obvious that 
$D\sim _{\mathbb R}0$. 
However, 
$\kappa (X, D)=-\infty$ because 
$\deg \lfloor mD\rfloor<0$ for every positive integer $m$. 
Note that $\mathbb R$-linear equivalence does not always preserve 
Iitaka's $D$-dimension. 
\end{rem}

Proposition \ref{lemCD} seems to be missing in the literature. 
We note that $X$ is not assumed to be {\em{projective}} in Proposition \ref{lemCD}. 

\begin{prop}\label{lemCD}
Let $D$ be an $\mathbb R$-Cartier $\mathbb R$-divisor on a 
{\em{normal}} complete irreducible variety $X$.  
Then the following conditions are equivalent. 
\begin{itemize}
\item[(1)] $D$ is big. 
\item[(2)] There exist a positive 
rational number $\alpha$ and a positive 
integer $m_0$ such that 
$$
\alpha m^{\dim X}\leq \dim H^0(X, \mathcal O_X(\lfloor mm_0D\rfloor))
$$ for every $m \gg 0$. 
\end{itemize}
\end{prop}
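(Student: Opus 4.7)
My plan is to handle the two implications separately, in each case reducing to the smooth projective setting via Chow's lemma and Hironaka's resolution. Fix a proper birational morphism $f:Y\to X$ with $Y$ smooth projective; Lemma \ref{lemABC} intertwines the global sections appearing in (2) on $X$ with those on $Y$ since $X$ is normal. For $(1)\Rightarrow(2)$, write $D=\sum a_iD_i$ as in Definition \ref{defnA}; since $X$ is already normal, each $f^*D_i$ stays big Cartier on $Y$ by Lemma \ref{lem0555} combined with Definition \ref{def-big2}, so $f^*D$ is big on $Y$ in the sense of Definition \ref{defnA}. Lemma \ref{lem088} then supplies the polynomial lower bound on $\dim H^0(Y,\mathcal O_Y(\lfloor mm_0 f^*D\rfloor))$, and the Lemma \ref{lemABC} identification yields (2) on $X$.

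For the harder direction $(2)\Rightarrow(1)$, I first show that $f^*D$ is big on $Y$ in the sense of Definition \ref{defnA}. Lemma \ref{lemABC} transfers the estimate (2) to $f^*D$ on $Y$. Imitating the proof of Kodaira's lemma (Lemma \ref{kod-lem}), for a general very ample Cartier divisor $H$ on $Y$ I combine the short exact sequence
\[ 0\to\mathcal O_Y(\lfloor mm_0f^*D\rfloor-H)\to\mathcal O_Y(\lfloor mm_0f^*D\rfloor)\to\mathcal O_H(\lfloor mm_0f^*D\rfloor|_H)\to 0 \]
with the elementary bound $\dim H^0(H,\mathcal O_H(\lfloor mm_0f^*D\rfloor|_H))=O(m^{\dim Y-1})$ to conclude that $\lfloor mm_0f^*D\rfloor\sim H+E$ for some effective Cartier $E$, making $\lfloor mm_0f^*D\rfloor$ a big Cartier divisor on $Y$. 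Splitting $f^*D=\tfrac{1}{mm_0}\lfloor mm_0f^*D\rfloor+\tfrac{1}{mm_0}\{mm_0f^*D\}$ and writing the fractional effective part as $\sum e_iE_i$ with $e_i\in[0,1)$, I absorb each $e_iE_i$ via the identity $e_iE_i=\tfrac{e_i}{N}(NE_i+A)-\tfrac{e_i}{N}A$ for an auxiliary ample Cartier divisor $A$, choosing $N$ large so that the small leftover multiple of $A$ reabsorbs into $\lfloor mm_0f^*D\rfloor$ using $\lfloor mm_0f^*D\rfloor\sim H+E$ (iterating once more to handle $E$). This exhibits $f^*D$, modulo $\sim_\mathbb R$, as a positive $\mathbb R$-combination of big Cartier divisors on $Y$; Proposition \ref{lemD} promotes this to genuine Definition \ref{defnA}-bigness of $f^*D$.

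The second stage is the descent back to $X$. The same absorption argument (combined with Lemma \ref{wkodaira}) shows that the cone of Definition \ref{defnA}-bigs on $Y$ is open in the space of $\mathbb R$-Cartier divisors. Writing $D=\sum_j c_jL_j$ with $L_j$ Cartier on $X$ and $c_j\in\mathbb R$, any sufficiently small rational perturbation $\vec c'$ of $\vec c$ gives a $\mathbb Q$-Cartier divisor $D'=\sum c_j'L_j$ whose pullback $f^*D'$ is still big on $Y$; applying the already-proven $(1)\Rightarrow(2)$ on $Y$ together with Lemma \ref{lemABC} shows $D'$ satisfies (2) on $X$, and since $D'$ is $\mathbb Q$-Cartier a positive integer multiple $ND'$ is a big Cartier divisor on the normal variety $X$ by Definition \ref{def-big}. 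Expressing $\vec c=\sum_k t_k\vec c^{(k)}$ as a positive convex combination of finitely many rational vectors $\vec c^{(k)}$ forming the vertices of a small simplex around $\vec c$, I get $D=\sum_k t_kD^{(k)}$ with each $D^{(k)}=\sum_j c_j^{(k)}L_j$ big $\mathbb Q$-Cartier, which immediately rewrites as the desired positive $\mathbb R$-combination of big Cartier divisors on $X$.

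The main obstacle will be the bookkeeping in the first stage, namely the careful absorption of the effective fractional $\mathbb R$-divisor into the big Cartier divisor $\lfloor mm_0f^*D\rfloor$ via ample Cartier corrections so that the coefficients in front of the auxiliary ample divisor remain positive throughout the iterations; this same absorption argument also underpins the openness of the big cone on $Y$ used in the descent, and Proposition \ref{lemD} is what allows us to upgrade the decomposition modulo $\sim_\mathbb R$ to an honest decomposition in the sense of Definition \ref{defnA}.
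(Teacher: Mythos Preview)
Your argument is correct, and the overall architecture---pass to a smooth projective model via Lemma~\ref{lemABC}, establish bigness there, then descend---matches the paper's. The differences are in the two sub-steps. On the resolution $Y$, the paper packages your absorption manoeuvre into two reusable lemmas: Lemma~\ref{lemCDD} (any effective $\mathbb R$-Cartier divisor on a normal variety is a positive $\mathbb R$-combination of effective Cartier divisors) and Lemma~\ref{lemCDDD} (big Cartier plus a positive real multiple of an effective Cartier divisor is big in the sense of Definition~\ref{defnA}, via the one-variable convex combination $B+rG=t(B+r_1G)+(1-t)(B+r_2G)$ with $r_1,r_2$ rational). Your ``iterating once more to handle $E$'' is precisely this convex-combination trick in disguise; invoking the two lemmas removes the bookkeeping you flagged as the main obstacle. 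Conversely, your descent to $X$ is cleaner than the paper's: you surround $\vec c$ by a small rational simplex so that $D$ becomes a convex combination of big $\mathbb Q$-Cartier divisors on $X$ directly. The paper instead uses one rational perturbation $D'$ only to witness the \emph{existence} of a big Cartier divisor $M$ on $X$ (a genuine issue, cf.~Example~\ref{exAB}), then runs Kodaira's lemma a second time on $X$ to write $lD\sim M+E'$ and reapplies Lemmas~\ref{lemCDD} and~\ref{lemCDDD} there; your simplex bypasses this second pass.
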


\begin{proof}First, we assume (2). 
Let $f:Y\to X$ be a resolution such that 
$Y$ is projective. By Lemma \ref{lemABC}, 
we have 
$$\alpha m^{\dim X}\leq \dim H^0(X, \mathcal O_X(\lfloor mm_0 f^*D\rfloor)).$$
By the usual argument as in the proof of Kodaira's lemma (cf.~Lemma \ref{kod-lem}),
we can write $f^*D\equiv A+E$, where $A$ is an ample $\mathbb Q$-Cartier
$\mathbb Q$-divisor and $E$ is an effective $\mathbb R$-Cartier 
$\mathbb R$-divisor on $Y$. 
By using Lemma \ref{lemCDDD} 
below,
we can write $A+E\equiv \sum a_i G_i$ where $a_i$ is a positive real number and $G_i$ is a 
big Cartier divisor for every $i$. By Proposition \ref{lemD}, $f^*D$ is a big
$\mathbb R$-Cartier $\mathbb R$-divisor on $Y$.
Let $D'$ be a $\mathbb Q$-Cartier $\mathbb Q$-divisor on $X$ whose coefficients are very close to those of $D$. Then $A+f^*D'-f^*D$ is an ample
$\mathbb R$-Cartier $\mathbb R$-divisor on $Y$.
Therefore, $f^*D'\equiv (A+f^*D'-f^*D)+E$ is also a big 
$\mathbb Q$-divisor on $Y$ as above.  By
Lemma \ref{lem0555}, $D'$ is a big $\mathbb Q$-Cartier 
$\mathbb Q$-divisor on $X$. 
This means that there exists a big Cartier divisor $M$ on $X$ 
(see Example \ref{exAB} below). 
By the assumption, we can write $lD\sim M+E'$, 
where $E'$ is an effective $\mathbb R$-Cartier $\mathbb R$-divisor 
(see, for example, the usual proof of Kodaira's lemma:~Lemma \ref{kod-lem}). 
By using Lemma \ref{lemCDD} and Lemma \ref{lemCDDD} below, 
we can write $M+E'\equiv \sum a'_i G'_i$, where 
$a'_i$ is a positive real number and $G'_i$ is a big Cartier 
divisor for every $i$. 
By Proposition \ref{lemD}, 
$D$ is a big $\mathbb R$-divisor on $X$. 

Next, we assume (1). 
Let $f:Y\to X$ be a resolution. 
Then $f^*D$ is big by Definition \ref{defnA} and 
Lemma \ref{lem0555}. 
By Lemma \ref{lemABC} and 
Lemma \ref{lem088}, 
we obtain the desired estimate in (2). 
\end{proof}

We have already used the following lemmas in the proof of 
Proposition \ref{lemCD}. 

\begin{lem}\label{lemCDD} 
Let $X$ be a normal irreducible variety and let $B$ be an effective $\mathbb R$-Cartier 
$\mathbb R$-divisor 
on $X$. 
Then $B$ can be written in the form 
$$
B=\sum _i b_i B_i 
$$
where each $B_i$ is an effective Cartier divisor and 
$b_i$ is a positive 
real number for every $i$. 
\end{lem}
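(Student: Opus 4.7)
The plan is to reduce the statement to a convex-geometry problem in a finite-dimensional vector space of Weil divisors. Let $E_1, \ldots, E_n$ be the prime components of $\Supp B$, and consider $V = \bigoplus_{i=1}^n \mathbb R E_i$ with the natural lattice $V_{\mathbb Z} = \bigoplus_i \mathbb Z E_i$ and effective cone $V_{\geq 0} = \bigoplus_i \mathbb R_{\geq 0} E_i$. Let $W \subset V$ denote the set of $\mathbb R$-Cartier divisors lying in $V$, and let $L = W \cap V_{\mathbb Z}$ be the group of (integer) Cartier divisors on $X$ with support contained in $\{E_1, \ldots, E_n\}$.

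The central step will be to show that $W$ is a rational subspace of $V$, i.e., $W = L \otimes_{\mathbb Z} \mathbb R$. Since $B$ is $\mathbb R$-Cartier, I can write $B = \sum_{j=1}^m r_j C_j$ for some Cartier divisors $C_j$ (whose supports may a priori include primes outside $\{E_i\}$) and real numbers $r_j$; the requirement that the output have support only in $\{E_1, \ldots, E_n\}$ translates to the finite system of linear equations $\sum_j r_j \mult_P (C_j) = 0$, one for each prime $P \notin \{E_i\}$ appearing in some $\Supp C_j$. Since these multiplicities are integers, the solution set is a $\mathbb Q$-rational subspace of $\mathbb R^m$; pushing this forward through the $\mathbb R$-linear map $(r_j) \mapsto \sum_j r_j C_j$ and then clearing denominators of the resulting $\mathbb Q$-Cartier divisors shows that $W$ equals the $\mathbb R$-span of $L$.

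Granting this, the conclusion is a routine application of convex geometry: the cone $K := W \cap V_{\geq 0}$ is a rational polyhedral cone inside the rational subspace $W$, so by Gordan's lemma it is generated as an $\mathbb R_{\geq 0}$-cone by its lattice points $L \cap V_{\geq 0}$, which are exactly the effective Cartier divisors supported on $\{E_1, \ldots, E_n\}$. Since $B \in K$, this produces the desired expression $B = \sum_i b_i B_i$ with $b_i > 0$ and $B_i$ effective Cartier.

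The hard part will be the rationality claim $W = L \otimes_{\mathbb Z} \mathbb R$: \emph{a priori}, an $\mathbb R$-Cartier divisor supported on $\{E_i\}$ is only guaranteed to be a real combination of arbitrary Cartier divisors, not of Cartier divisors already supported in $\{E_i\}$, so the argument above (exploiting that the off-support vanishing conditions are defined over $\mathbb Q$) is genuinely needed. Everything else---the passage from $\mathbb Q$-Cartier to Cartier by denominator clearing, and Gordan's lemma---is standard.
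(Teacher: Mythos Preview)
Your proof is correct, and the underlying idea---reduce to a rational polyhedral cone and express $B$ via its integer generators---is the same as the paper's. But the paper carries it out in a different ambient space that eliminates what you flag as ``the hard part.''

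The paper fixes one representation $B=\sum_{j=1}^{l} d_j D_j$ with $D_j$ Cartier, lets $E_1,\ldots,E_m$ be the prime components of $\bigcup_j \Supp D_j$ (not merely of $\Supp B$), writes $D_j=\sum_k a_k^j E_k$ with $a_k^j\in\mathbb Z$, and works in the coefficient space $\mathbb R^l$ with the cone
\[
\mathcal E=\Bigl\{(r_1,\ldots,r_l)\in\mathbb R^l\ \Big|\ \textstyle\sum_j r_j a_k^j\geq 0\ \text{for every }k\Bigr\}.
\]
This is visibly a rational polyhedral cone, and $(d_1,\ldots,d_l)\in\mathcal E$ since $B$ is effective. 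Any integer point $(n_j)\in\mathcal E$ yields the effective Cartier divisor $\sum_j n_j D_j$, so decomposing $(d_j)$ along integer generators of $\mathcal E$ finishes immediately.

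The difference is that in the coefficient space every lattice point is automatically Cartier, so no separate rationality argument for a ``Cartier subspace'' is needed. Your route through the Weil divisor space $V$ supported on $\Supp B$ forces you to prove that $W$ is rational; your argument for this (the off-support vanishing conditions $\sum_j r_j\,\mult_P(C_j)=0$ are integer linear, hence cut out a rational subspace) is correct, but the paper's choice of ambient space sidesteps the issue entirely. One small wording point: your argument as stated only shows $B\in L\otimes_{\mathbb Z}\mathbb R$, not that the full $W$ equals $L\otimes_{\mathbb Z}\mathbb R$; this already suffices for the cone step, and in any case the same reasoning applies to an arbitrary element of $W$.
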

\begin{proof}
We can write $B=\sum _{j=1}^{l} d_j D_j$, 
where $d_j$ is a real number and $D_j$ is Cartier 
for every $j$. 
We set $E=\cup _j \Supp D_j$. 
Let $E=\sum _{k=1}^{m}E_k$ be the irreducible 
decomposition. 
We can write $D_j=\sum _{k=1}^{m} a^{j}_kE_k$ for every $j$. 
Note that $a^j_k$ is integer 
for every $j$ and $k$. 
We can also write $B=\sum _{k=1}^{m} c_k E_k$ with 
$c_k\geq 0$ for every $k$. 
We consider 
\begin{align*}
\mathcal E=\left\{(r_1, \cdots, r_l)\in \mathbb R^l\, \left| \, 
\sum _{j=1}^l r_j a^j_k \geq 0\ \right. \text{for every} \ k\right\}\subset \mathbb R^l. 
\end{align*}
Then $\mathcal E$ is a rational convex polyhedral cone and 
$(d_1, \cdots, d_l)\in \mathcal E$. 
Therefore, we can find effective Cartier divisors $B_i$ and positive real numbers $b_i$ such that $B=\sum _i b_i B_i$. 
\end{proof}

\begin{lem}\label{lemCDDD} 
Let $B$ be a big Cartier divisor on a normal irreducible 
variety $X$ and let $G$ be an effective 
Cartier divisor on $X$. 
Then $B+rG$ is big for any positive real number $r$. 
\end{lem}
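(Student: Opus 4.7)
The plan is to handle the rational and irrational values of $r$ separately: the rational case reduces to the classical fact that a big plus effective Cartier divisor is again big, while the irrational case then follows from the convexity built into Definition \ref{defnA}. The auxiliary fact I would establish first is that for any big Cartier divisor $D$ and effective Cartier divisor $E$ on a normal complete variety, $D + E$ is again big. This is immediate from criterion (2) of Definition \ref{def-big}: letting $s_E$ be the canonical section of $\mathcal O_X(E)$, multiplication by $s_E^m$ gives an injection $H^0(X, \mathcal O_X(mD)) \hookrightarrow H^0(X, \mathcal O_X(m(D+E)))$, and the required lower bound $\alpha\, m^{\dim X} \leq \dim H^0(X, \mathcal O_X(m(D+E)))$ is inherited from $D$.

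Now suppose first that $r = p/q$ is a positive rational number with $p,q$ positive integers. Then $B + rG = \tfrac{1}{q}(qB + pG)$, and the Cartier divisor $qB + pG$ is big by the auxiliary fact applied to the big Cartier divisor $qB$ and the effective Cartier divisor $pG$. Thus $B + rG$ is a big $\mathbb Q$-Cartier $\mathbb Q$-divisor in the sense of Definition \ref{defnQ}, which in turn is big as an $\mathbb R$-Cartier $\mathbb R$-divisor by Definition \ref{defnA} (compare Remark \ref{rem099}).

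For a general positive real $r$, I would pick positive rationals $r_1 < r < r_2$ and $t \in (0,1)$ with $r = t r_1 + (1-t) r_2$, so that
\[
B + rG = t(B + r_1 G) + (1-t)(B + r_2 G).
\]
By the rational case, each $B + r_i G$ is a positive rational multiple of a big Cartier divisor; scaling by $t$ and $1-t$ and adding expresses $B + rG$ as a finite sum of positive real multiples of big Cartier divisors, which is exactly the form demanded by Definition \ref{defnA}. There is no serious obstacle here — once the auxiliary fact is in hand the argument is essentially bookkeeping — and the only mild subtlety is ensuring $r_1 > 0$ so that the convex combination remains inside the positive cone, which is automatic because $r > 0$.
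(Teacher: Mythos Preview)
Your proof is correct and follows essentially the same route as the paper: treat the rational case first and then express the irrational case as a convex combination of two rational cases, invoking Definition~\ref{defnA}. The only difference is that the paper dismisses the rational case as ``obvious by the definition of big $\mathbb Q$-divisors,'' whereas you spell out the underlying reason (the injection $H^0(X,\mathcal O_X(mD))\hookrightarrow H^0(X,\mathcal O_X(m(D+E)))$ given by multiplication by $s_E^m$).
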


\begin{proof}
If $r$ is rational, then this lemma is obvious by the definition of 
big $\mathbb Q$-divisors. If $r$ is not rational, then 
we can write 
$$
B+rG=t(B+r_1G)+(1-t)(B+r_2G)
$$
where $r_1$ and $r_2$ are rational, 
$0<r_1<r<r_2$, and $t$ is a real number with $0<t<1$. 
By Definition \ref{defnA}, $B+rG$ is a big $\mathbb R$-divisor.  
\end{proof}

Example \ref{exAB} implies that 
a normal complete algebraic variety does not always 
have big Cartier divisors even when the Picard number is one. 
For the details of Example \ref{exAB}, 
see \cite[Section 4]{fujino-km}.

\begin{ex}\label{exAB}
Let $\Delta$ be the fan in $\mathbb R^3$ whose 
rays are generated by $v_1=(1, 0, 1)$, 
$v_2=(0, 1, 1)$, 
$v_3=(-1, -2, 1)$, 
$v_4=(1, 0, -1)$, 
$v_5=(0, 1, -1)$, 
$v_6=(-1, -1, -1)$ and 
whose maximal cones are 
\begin{align*}
\langle v_1, v_2, v_4, v_5\rangle, 
\langle v_2, v_3, v_5, v_6\rangle, 
\langle v_1, v_3, v_4, v_6 \rangle, 
\langle v_1, v_2, v_3\rangle, 
\langle v_4, v_5, v_6 \rangle. 
\end{align*}
Then the associated toric threefold 
$X$ is complete with $\rho (X)=0$. 
More precisely, every Cartier divisor 
on $X$ is linearly equivalent to zero. 

Let $f:Y\to X$ be the blow-up 
along $v_7=(0, 0, -1)$ and let $E$ be the 
$f$-exceptional 
divisor on $Y$. 
Then we can check that $\rho (Y)=1$ and 
that $\mathcal O_Y(E)$ is a generator of 
$\Pic (Y)$. Therefore, 
there are no big Cartier divisors on $Y$. 
\end{ex}

The next lemma is almost obvious. 

\begin{lem}\label{lemC}
Let $V$ be a complete irreducible variety and let $D$ be a big 
$\mathbb R$-Cartier $\mathbb R$-divisor 
on $V$. Let $g:W\to V$ be an arbitrary proper birational morphism 
from an irreducible variety $W$. 
Then $g^*D$ is big. 
\end{lem}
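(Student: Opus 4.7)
The plan is to reduce Lemma \ref{lemC} to the normal case, which has already been handled in Lemma \ref{lem0555}, via the normalization construction.

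First, I would unwind the definition. By Definition \ref{defnA}, we can write
\[
D = \sum_i a_i D_i
\]
where each $D_i$ is a big Cartier divisor on $V$ and each $a_i > 0$. Since $g^*D = \sum_i a_i g^*(D_i)$, and Definition \ref{defnA} shows that bigness is preserved under positive real linear combinations, it suffices to prove the statement in the case where $D$ itself is a big Cartier divisor.

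Next, I would reduce to normal varieties by passing to the normalization. Let $\nu_V : V^\nu \to V$ and $\nu_W : W^\nu \to W$ be the normalizations. Since $W$ is irreducible, so is $W^\nu$. The universal property of normalization produces a unique morphism $\tilde g : W^\nu \to V^\nu$ making the square
\[
\begin{CD}
W^\nu @>{\tilde g}>> V^\nu \\
@V{\nu_W}VV @VV{\nu_V}V \\
W @>>{g}> V
\end{CD}
\]
commute. Properness of $\tilde g$ follows from properness of $g$ and finiteness of $\nu_V$, and birationality of $\tilde g$ follows from birationality of $g$ together with the fact that each normalization morphism is an isomorphism over a dense open set of the smooth locus. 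Consequently, $\tilde g$ is a proper birational morphism between normal varieties, and $\nu_W^* g^* D = \tilde g^* \nu_V^* D$.

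Finally, by Definition \ref{def-big2}, the hypothesis that $D$ is big on $V$ means exactly that $\nu_V^* D$ is big on the normal variety $V^\nu$. Applying Lemma \ref{lem0555} to the proper birational map $\tilde g : W^\nu \to V^\nu$ between normal varieties (using that a Cartier divisor is a $\mathbb Q$-Cartier $\mathbb Q$-divisor), we conclude that $\tilde g^* \nu_V^* D$ is big on $W^\nu$. But this equals $\nu_W^* g^* D$, so by Definition \ref{def-big2} again, $g^* D$ is big on $W$. This completes the plan. There is no real obstacle here: the only point requiring a moment of care is verifying that $\tilde g$ is proper and birational, and both are essentially formal consequences of the universal property of normalization.
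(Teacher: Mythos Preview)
Your proof is correct and follows essentially the same approach as the paper: reduce to the Cartier case via Definition \ref{defnA}, form the commutative square of normalizations, and apply Lemma \ref{lem0555} together with Definition \ref{def-big2}. You are slightly more explicit than the paper in verifying that the induced map $\tilde g$ between normalizations is proper and birational, but the argument is otherwise identical.
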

\begin{proof}
By Definition \ref{defnA}, we may assume that 
$D$ is Cartier. 
We obtain the following commutative diagram. 
$$
\xymatrix{W\ar[d]_g &\ar[l]_\mu  W^\nu \ar[d]^h\\ 
V &\ar[l]^{\nu}V^\nu
}
$$ 
Here, $\mu:W^\nu\to W$ and $\nu:V^\nu\to V$ are the normalizations. 
Since $\nu^*D$ is big, $h^*\nu^*D=\mu^*g^*D$ 
is also big. 
We note that $h$ is a birational morphism 
between normal irreducible varieties (see Lemma \ref{lem0555}). 
Thus, $g^*D$ is big by Definition \ref{def-big2}. 
\end{proof}

Kodaira's lemma for big $\mathbb R$-Cartier 
$\mathbb R$-divisors on normal varieties is 
also obvious (cf.~the proof of Lemma \ref{kod-lem}). 

\begin{lem}[Kodaira's lemma for 
big $\mathbb R$-divisors on normal varieties]\label{lemE}
Let $X$ be a complete irreducible {\em{normal}} variety 
and let $D$ be a big $\mathbb R$-Cartier 
$\mathbb R$-divisor on $X$. Let $M$ be an arbitrary 
Cartier divisor on $X$. 
Then there exist a positive integer $l$ and an effective 
$\mathbb R$-Cartier $\mathbb R$-divisor 
$E$ on $X$ such that 
$lD-M\sim E$.  
\end{lem}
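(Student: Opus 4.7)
My plan is to reduce to the smooth projective case, where the weak Kodaira lemma (Lemma \ref{wkodaira}) is directly available. Since $X$ is only assumed complete, I first apply Chow's lemma to get a projective birational modification $X' \to X$ with $X'$ quasi-projective; being complete, $X'$ is then projective. A resolution of singularities of $X'$ furnishes a proper birational morphism $f \colon W \to X$ with $W$ a smooth projective variety.

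By Lemma \ref{lemC}, the pullback $f^{*}D$ is a big $\mathbb{R}$-Cartier $\mathbb{R}$-divisor on $W$. Lemma \ref{wkodaira}, applied on $W$, therefore yields a decomposition
\[
f^{*}D \sim_{\mathbb{R}} A + E_{W},
\]
where $A$ is an ample $\mathbb{Q}$-Cartier $\mathbb{Q}$-divisor on $W$ and $E_{W}$ is an effective $\mathbb{R}$-Cartier $\mathbb{R}$-divisor on $W$. I then choose a positive integer $l$ large enough that $lA$ is Cartier and $lA - f^{*}M$ is very ample; in particular there is an effective Cartier divisor $F$ on $W$ with $lA - f^{*}M \sim F$. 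Hence
\[
f^{*}(lD - M) \;=\; l\, f^{*}D - f^{*}M \;\sim_{\mathbb{R}}\; (lA - f^{*}M) + l E_{W} \;\sim\; F + l E_{W},
\]
which is an effective $\mathbb{R}$-Cartier $\mathbb{R}$-divisor on $W$.

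It remains to push this relation down to $X$. Since $f$ is a proper birational morphism between normal varieties, $f_{*}$ preserves $\mathbb{R}$-linear equivalence and effectivity of $\mathbb{R}$-Cartier divisors, and satisfies $f_{*}f^{*} = \mathrm{id}$ on $\mathbb{R}$-Cartier divisors on $X$. Applying $f_{*}$ to the relation above gives $lD - M \sim_{\mathbb{R}} E$ with $E := f_{*}(F + l E_{W})$ effective on $X$, which is the conclusion (with $\sim$ read as $\sim_{\mathbb{R}}$, as is forced by the $\mathbb{R}$-divisor setting). The main subtlety I foresee is simply producing the smooth projective model $W$ from a merely complete normal $X$, so that Lemma \ref{wkodaira} is applicable; once that reduction is in place, the rest of the argument is a formal combination of the weak Kodaira decomposition, Serre-vanishing effectivity of $lA - f^{*}M$ for large $l$, and the standard behavior of $f_{*}$ under pullback and linear equivalence.
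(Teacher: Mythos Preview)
Your argument is sound as far as it goes, but it proves a weaker statement than the one claimed. The lemma asserts $lD-M\sim E$, and in this paper $\sim$ denotes genuine linear equivalence (the difference is a single principal Cartier divisor), not $\sim_{\mathbb R}$. Your claim that reading $\sim$ as $\sim_{\mathbb R}$ is ``forced by the $\mathbb R$-divisor setting'' is incorrect: the stronger statement with $\sim$ is both meaningful and true for $\mathbb R$-Cartier $D$, and it is exactly what the paper uses elsewhere (see the proof of Proposition~\ref{lemCD}, where $lD\sim M+E'$ appears with the same meaning).

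The paper's route, via Proposition~\ref{lemCD} together with the exact-sequence argument of Lemma~\ref{kod-lem}, produces the stronger conclusion directly. Proposition~\ref{lemCD} gives $\alpha m^{\dim X}\le \dim H^0(X,\mathcal O_X(\lfloor mm_0D\rfloor))$; passing to a smooth projective resolution $f\colon Y\to X$, Lemma~\ref{lemABC} identifies these sections with $H^0(Y,\mathcal O_Y(\lfloor mm_0 f^*D\rfloor))$, and the usual hyperplane exact sequence on $Y$ (with a very ample $A$ dominating $f^*M$) yields a nonzero $\phi\in H^0(X,\mathcal O_X(\lfloor lD\rfloor-M))$ for some $l\gg 0$. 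Setting $E=lD-M+(\phi)$, one has $E\ge \lfloor lD\rfloor-M+(\phi)\ge 0$, and $E$ is $\mathbb R$-Cartier since $lD$ is $\mathbb R$-Cartier while $M$ and $(\phi)$ are Cartier; thus $lD-M\sim E$ with a single principal divisor.

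The detour through Lemma~\ref{wkodaira} is precisely what introduces the unremovable $\sim_{\mathbb R}$: that lemma only gives $f^*D\sim_{\mathbb R} A+E_W$, and pushing forward cannot upgrade $\mathbb R$-linear equivalence to linear equivalence. Your reduction to a smooth projective model via Chow's lemma and resolution, and your use of Lemma~\ref{lemC}, are fine and match the intended set-up; the fix is simply to replace the appeal to Lemma~\ref{wkodaira} by the section-counting argument above.
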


Finally, we discuss relatively big $\mathbb R$-divisors. 

\begin{defn}[Relatively big $\mathbb R$-divisors]\label{defnF}
Let $\pi:X\to S$ be a proper morphism 
from an irreducible 
variety $X$ onto a variety $S$ and let $D$ be an $\mathbb R$-Cartier $\mathbb R$-divisor on 
$X$. 
Then $D$ is called {\em{$\pi$-big}} (or, {\em{big over $S$}}) if $D|_{X_\eta}$ is big 
on $X_\eta$, where $X_\eta$ is the generic fiber of $\pi$. 
\end{defn}

We need the following lemma for the proof of the Kawamata--Viehweg 
vanishing theorem for $\mathbb R$-divisors. 

\begin{lem}[{cf.~\cite[Corollary 0-3-6]{kmm}}]\label{lemGH}
Let $\pi:X\to S$ be a proper surjective morphism from an irreducible variety $X$ onto 
a quasi-projective variety $S$ and let $D$ be a $\pi$-nef and $\pi$-big 
$\mathbb R$-Cartier $\mathbb R$-divisor 
on $X$. 
Then there exist a proper birational 
morphism $\mu:Y\to X$ from a smooth variety $Y$ projective over $S$ and 
divisors $F_\alpha$'s on $Y$ such that 
$\Supp \mu^*D\cup(\cup F_\alpha)$ is a simple normal crossing divisor and 
that $\mu^*D-\sum _\alpha\delta_\alpha F_\alpha$ is $\pi\circ \mu$-ample for some 
$\delta_\alpha$ with $0<\delta_\alpha\ll 1$. 
\end{lem}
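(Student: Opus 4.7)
The plan is to reduce to a smooth model projective over $S$, apply a relative Kodaira's lemma to split $D$ as ample plus effective, and then use a standard exceptional-divisor trick on a further log resolution to conclude.

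\textbf{Step 1 (Reduction).} Since $S$ is quasi-projective and $\pi : X \to S$ is proper, Chow's lemma combined with resolution of singularities yields a proper birational morphism $\mu_0 : X_0 \to X$ with $X_0$ smooth and $\pi \circ \mu_0 : X_0 \to S$ projective. The pullback $\mu_0^* D$ remains $(\pi \circ \mu_0)$-nef and, restricted to the generic fibre, remains big (cf.\ Lemma \ref{lemC}), so $\mu_0^* D$ is $(\pi \circ \mu_0)$-nef and $(\pi \circ \mu_0)$-big. Replacing $X$ by $X_0$, I may henceforth assume $X$ is smooth and projective over $S$.

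\textbf{Step 2 (Relative Kodaira).} Apply the relative analogue of Lemma \ref{wkodaira} to write
$$\mu_0^* D \;\sim_{\mathbb R,\,\pi \circ \mu_0}\; A_0 + E_0,$$
where $A_0$ is a $(\pi \circ \mu_0)$-ample $\mathbb Q$-Cartier $\mathbb Q$-divisor and $E_0$ is an effective $\mathbb R$-Cartier $\mathbb R$-divisor on $X_0$. Concretely, one fixes a $(\pi \circ \mu_0)$-ample Cartier divisor $H$, uses $(\pi \circ \mu_0)$-bigness of $\mu_0^* D$ to produce an effective $E'$ with $l\,\mu_0^* D \sim_{\mathbb R,\,\pi \circ \mu_0} H + E'$ for some large $l$, then rescales and perturbs (using openness of ampleness) to extract the $\mathbb Q$-Cartier ample piece $A_0$.

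\textbf{Step 3 (Log resolution and conclusion).} Take a projective birational morphism $\mu_1 : Y \to X_0$ from a smooth variety $Y$ such that $\Supp \mu_1^*(\mu_0^* D + A_0 + E_0) \cup \Exc(\mu_1)$ is a simple normal crossing divisor, and set $\mu := \mu_0 \circ \mu_1$. By a standard fact for projective birational morphisms between smooth varieties, there is an effective $\mu_1$-exceptional $\mathbb Q$-Cartier $\mathbb Q$-divisor $G$ on $Y$ with $-G$ being $\mu_1$-ample; consequently, for $0 < \epsilon \ll 1$, the divisor $\mu_1^* A_0 - \epsilon G$ is $(\pi \circ \mu)$-ample. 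Pulling back Step 2 gives $\mu^* D \sim_{\mathbb R,\,\pi \circ \mu} \mu_1^* A_0 + \mu_1^* E_0$, and hence for $0 < \delta \ll 1$ with $\epsilon$ chosen accordingly,
$$\mu^* D - \delta\,\mu_1^* E_0 - \delta\epsilon\, G \;\sim_{\mathbb R,\,\pi \circ \mu}\; (1-\delta)\,\mu^* D + \delta(\mu_1^* A_0 - \epsilon G),$$
which is the sum of a $(\pi \circ \mu)$-nef and a $(\pi \circ \mu)$-ample divisor, hence $(\pi \circ \mu)$-ample. Writing $\delta\,\mu_1^* E_0 + \delta\epsilon\, G = \sum_\alpha \delta_\alpha F_\alpha$ in terms of the distinct irreducible components of its support (each $F_\alpha$ lying in the SNC divisor arranged above and each $\delta_\alpha$ small) gives the desired conclusion.

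\textbf{Main obstacle.} The most delicate ingredient is Step 2: the absolute case on a complete normal variety follows from Proposition \ref{lemCD} together with Lemma \ref{wkodaira}, but the relative version with genuine $\mathbb R$-coefficients requires carefully spreading the generic-fibre decomposition over $S$ and preserving ampleness of the $\mathbb Q$-part after a small rational perturbation. The remaining steps — Chow+resolution in Step 1, and the exceptional-divisor trick together with the nef+ample=ample combination in Step 3 — are standard birational-geometric manipulations.
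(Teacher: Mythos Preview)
Your proposal is correct and follows essentially the same approach the paper intends: the paper's proof is a single sentence pointing to Lemma \ref{lemE} (Kodaira's lemma for big $\mathbb R$-divisors) together with Hironaka's resolution theorem, and your Steps 1--3 are precisely the standard unpacking of that recipe---Chow plus resolution, relative Kodaira to split off an ample $\mathbb Q$-piece, then a log resolution and the exceptional $(-G)$ trick to recover relative ampleness. Your identification of the relative $\mathbb R$-divisor Kodaira step as the only nonformal point is accurate and matches why the paper singles out Lemma \ref{lemE}.
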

We can check Lemma \ref{lemGH} by Lemma \ref{lemE} and Hironaka's 
resolution theorem. 


\end{document}